\documentclass[11pt,leqno]{amsart}

\usepackage{amssymb}
\usepackage{amsmath}
\usepackage{amsthm}
\usepackage{latexsym}
\usepackage{amscd, xypic}
\usepackage[dvipsnames]{xcolor}
\usepackage{tikz-cd} 
\tikzcdset{arrow style=tikz}

\begin{document}

\newtheorem{thm}{Theorem}
\newtheorem{prop}[thm]{Proposition}
\newtheorem{lem}[thm]{Lemma}
\newtheorem{cor}[thm]{Corollary}
\newtheorem*{que}{Question}
\newtheorem*{ass}{Assumption}
\theoremstyle{definition}
\newtheorem{defi}[thm]{Definition}
\newtheorem{ex}[thm]{Example}
\newtheorem{rem}[thm]{Remark}

\newcommand{\iso}{\mbox{Iso}}
\newcommand{\m}{^{-1}}
\newcommand{\G}{\mathcal {G}}
\newcommand{\U}{\mathcal {U}}
\newcommand{\N}{\mathbb {N}}
\newcommand{\si}{\sigma}
\newcommand{\rh}{\rho}
\newcommand{\ta}{\tau}
\newcommand{\rmod}{R\text{-\textup{\textbf{mod}}}}
\newcommand{\rumod}{R\text{-\textup{\textbf{umod}}}}
\newcommand{\grmod}{\G\textup{-}\rmod}
\newcommand{\grumod}{\G\textup{-}\rumod}
\newcommand{\umodgr}{\G\text{-\textup{\textbf{umod}}}\textup{-}R}
\newcommand\id[1]{\mathbf{1}_{#1}} 
\newtheorem{assertion}[thm]{{\sc Assertion}}
\newcommand\restr[2]{{
  \left.\kern-\nulldelimiterspace 
  #1 
  \vphantom{\big|} 
  \right|_{#2} 
  }}

\title[Object-unital groupoid graded rings]{Object-unital groupoid graded rings,
crossed products and separability}

\author[J. Cala]{Juan  Cala }
\address{Escuela de Matem\'{a}ticas,
Universidad Industrial de Santander,
Carrera 27 Calle 9,
Edificio Camilo Torres
Apartado de correos 678,
Bucaramanga, Colombia}
\email{{\scriptsize jcalab@gmail.com }}

\author[P. Nystedt]{Patrik Nystedt}
\address{University West,
Department of Engineering Science, 
SE-46186 Trollh\"{a}ttan, Sweden}
\email{{\scriptsize patrik.nystedt@hv.se}}

\author[H. Pinedo]{Hector Pinedo}
\address{Escuela de Matem\'{a}ticas,
Universidad Industrial de Santander,
Carrera 27 Calle 9,
Edificio Camilo Torres
Apartado de correos 678,
Bucaramanga, Colombia}
\email{{\scriptsize hpinedot@uis.edu.co}}

\subjclass[2010]{16W50, 
16S35,
12F10, 
16H05, 
20L05 
}

\keywords{graded ring; crossed product; separable extension; groupoids}

\begin{abstract}
We extend the classical construction by Noether of crossed product algebras, defined by finite
Galois field extensions, to cover the case of separable (but not necessarily finite or normal) field extensions.
This leads us naturally to consider non-unital groupoid graded rings of a particular type that we call object unital.
We determine when such rings are strongly graded, crossed products, skew groupoid rings and twisted groupoid rings. 
We also obtain necessary and sufficient criteria for when object unital groupoid graded rings are 
separable over their principal component, thereby generalizing 
previous results from the unital case to a non-unital situation.
\end{abstract}

\maketitle

\section{Introduction}\label{section:introduction}

Recall that if $L/K$ is a finite Galois field extension with Galois group $G$,
$$G \ni \si \mapsto \alpha_\si \in {\rm Aut}_K(L)$$
denotes the corresponding group homomorphism and 
$$G \times G \ni (\si,\ta) \mapsto \beta_{\si,\ta} \in L \setminus \{ 0 \}$$
is a {\it cocycle}, that is for all $\si,\ta,\rho \in G$ the relations
\begin{equation}\label{identity}
\beta_{\si , e} = \beta_{ e , \si} = 1,
\end{equation}
where $e$ denotes the multiplicative identity of $G$, and
\begin{equation}\label{cocycle} 
\beta_{\si,\ta} \beta_{\si\ta,\rho} = \alpha_\si( \beta_{\ta,\rho} ) \beta_{\si,\ta\rho}
\end{equation}
hold, then we can define the so called {\it crossed product algebra} $(L/K,\beta)$
in the following way.
As a left $L$-vector space this structure is defined to be $\oplus_{\si \in G} L u_\si$
where $\{ u_\si \}_{\si \in G}$ is a set of symbols.
The multiplication in $(L/K,\beta)$
is defined by the $K$-linear extension of the relations
\begin{equation}\label{multiplication}
( x u_\si ) ( y u_\tau ) = x \alpha_\si(y) \beta_{\si,\ta} u_{\si\ta},
\end{equation}
for $x,y \in L$ and $\si , \ta \in G$. 
It is well known that $(L/K,\beta)$
is a central simple $K$-algebra (see e.g. \cite[Theorem (29.6)]{reiner1975}).

The crossed product algebras can be put in the more general context of graded rings in the following sense.
If we put $R = (L/K,\beta)$ 
and $R_\si = L u_\si$, for $\si \in G$, then
$R = \oplus_{\si \in G} R_\si$ and $R_\si R_\tau \subseteq R_{\si\ta}$, for $\sigma,\tau \in G$, 
which means that $R$ is {\it graded} by $G$ (or $G$-graded).
In fact, since the $\beta_{\si,\ta}$ are non-zero, we always have equality 
$R_\si R_\ta = R_{\si\tau}$, for $\si,\tau \in G$, so that $R$ is a so called {\it strongly graded} ring.

The impetus for the theoretical framework in this article is the following

\begin{que}\label{mainquestion}
Is it possible to define a crossed product-like structure, starting from a separable,
but not necessarily finite or Galois, field extension $L/K$?
\end{que}

In \cite{lundstrom2005} the second author of the present article suggested an answer to this question 
in the case when $L/K$ is a {\it finite} separable field extension.
In doing so he was naturally lead to structures which no longer, in any natural sense,
fitted into the framework of group graded rings, bur instead suited well in the context of rings graded by {\it groupoids}.
Let us briefly describe these structures.

A groupoid $\G$ is a small category with the property that all morphisms are isomorphisms. 
Equivalently, this can be defined by saying that $\G$ is a set equipped with
a unary operation $\G \ni \sigma \mapsto \sigma^{-1} \in \G$ (inversion) and a partially defined multiplication
$\G \times \G \ni (\sigma,\tau) \mapsto \sigma \tau \in \G$ (composition) such that
for all $\si,\ta,\rh \in \G$ the following four axioms hold:
(i) $(\si^{-1})^{-1} = \si$;
(ii) if $\si \tau$ and $\ta \rh$ are defined, then $(\si \ta) \rh$ and $\si (\ta \rh)$ are defined and $(\si \ta) \rh=\si (\ta \rh)$;
(iii) the {\it domain} $d(\si) := \si^{-1} \si$ is always defined and if $\si \tau$ is defined, then $d(\si) \tau = \tau$;
(iv) the {\it range} $r(\ta) := \ta \ta^{-1}$ is always defined and if $\si \tau$ is defined, then $\si r(\ta) = \si$.
The maps $d$ and $r$ have a common image denoted by $\G_0$, which is called the \textit{unit space} of $\G$.
The set $\G_2 = \{ (\si,\ta) \in \G \times \G \mid \mbox{$\sigma \tau$ is defined} \}$ 
is called the set of {\it composable pairs} of $\G$ and, analogously,
$\G_3$ denotes the set of {\it composable triples} of $\G$, 
that is $\{ (\si,\ta,\rho) \in \G \times \G \times \G \mid (\si,\ta) \in \G_2 \ {\rm and} \ (\ta,\rho) \in \G_2 \}$. 
For more details about groupoids, the interested reader may consult for example \cite{L} or \cite{ren}.  

Suppose that $R$ is an associative but not necessarily unital ring.
If $R$ is unital, then we let $1_R$ denote the multiplicative identity of $R$.
Let $\G$ be a groupoid. Recall from \cite{lundstrom2004} (see also \cite{liu2005}) that
the ring $R$ is said to be \emph{graded by $\G$}, or \emph{$\G$-graded}, if there 
for all $\si \in \G$ is an additive subgroup $R_{\si}$ of $R$ such that $R = \bigoplus_{\si \in \G} R_{\si}$
and for all $\si,\ta \in \G$ the inclusion $R_{\si} R_{\ta} \subseteq R_{\si \ta}$ holds,
if $(\si,\ta) \in G_2$, and $R_{\si} R_{\ta} = \{ 0 \}$, otherwise.
There are  relevant classes of rings that can be graded by groupoids, such as matrix rings, 
crossed product algebras defined by separable extensions and partial skew groupoid rings that are not, in a
natural way, graded by groups (see e.g. \cite{lundstrom2004,lundstrom2006,NyOP22018}). 

Let $L/K$ be a finite separable (not necessarily normal) field extension.
The corresponding crossed product construction from \cite{lundstrom2005} runs as follows.
Let $N$ be a normal closure of $L/K$ and let $G$ denote the Galois group of $N/K$. 
Let $L=L_1 , L_2 \ldots , L_n$ denote the different conjugate fields of $L$ under the action of $G$. 
If $1 \leq i,j \leq n$, then let $G_{ij}$ denote the set of
field isomorphisms from $L_j$ to $L_i$. If $\si \in G_{ij}$,
then we indicate this by writing $d(\si) = j$ and $r(\si) = i$. 
If we let $\G$ denote the union of the $G_{ij}$, $1 \leq i,j \leq n$, 
then $\G$ is a groupoid. 
The crossed product algebra $(L/K,\beta)$
is defined as the additive group 
$\bigoplus_{\si \in \G} L_{r(\si)} u_\si$ with multiplication defined
by the $K$-linear extension of (\ref{multiplication}), if
$(\si,\ta) \in \G_2$, and $x u_\si y u_\ta = 0$, otherwise, for all $\si,\ta \in \G$ 
and all $x \in L_{d(\si)}$, $y \in L_{r(\ta)}$, where $\beta$ is a cocycle on the groupoid $\G$. 
The latter means that (see e.g. \cite{ren} for the details) $\beta_{\si,\ta}$ is defined precisely when
$(\si,\ta) \in \G_2$ and that it then satisfies $\beta_{\si,\ta} \in L_{r(\si)} \setminus \{ 0 \}$ 
and (\ref{cocycle}) for all $(\si,\ta,\rho) \in \G_3$. 
We also assume that $\beta$ satisfies (\ref{identity}) whenever $\si$ or $\ta$ is an identity map on some of the
conjugate fields of $L$. 
If we put $R = (L/K,\beta)$
and $R_\si = L u_\si$, for $\si \in \G$, then,
clearly, $R$ is graded by the groupoid $\G$. In fact, it is even {\it strongly graded} in the sense
that $R_\si R_\ta = R_{\si\ta}$, for $(\si,\ta) \in \G_2$.
Furthermore, $R$ is unital with $1_R = \sum_{i=1}^n 1_{L_i} u_{{\rm id}_{L_i}}$.
Note that if $L/K$ is Galois, then $\G = G$ and thus $(L/K,\beta)$
coincides with the classical crossed product algebra construction above.

Turning back to our question, suppose now that $L/K$
is a separable field extension of {\it infinite} degree.
It is not hard to imagine that if we mimic the construction from the finite case, 
then we would still get a groupoid graded ring $R = (L/K,\beta)$
which, however, now looses the property of being unital (see Example \ref{crossedproductseparable} for the details).
Nevertheless, $R$ always retains a weaker form of unitality as a graded ring, in the following sense
(introduced in \cite{oinert2012}).
If $R$ is a ring graded by a groupoid $\G$, then $R$ is said to be {\it object unital} if 
for all $e \in \G_0$ the ring $R_e$ is unital and for all $\si \in \G$ and all 
$r \in R_{\si}$ the equalities $1_{R_{r(\si)}} r = r 1_{R_{d(\si)}} = r$ hold.
This is the class of objects that we wish to study.
Note that in \cite{oinert2012} the concept object unital is called ``locally unital''. 
However, since the latter may be confused with a different concept in ring theory
(see Section \ref{section:gradedrings}) we have chosen to rename it here. 
It is also worth mentioning that crossed products induced by group actions in rings with local units  
has already been used in \cite{ELGO}  in order to obtain an analogue of the 
Chase–Harrison–Rosenberg seven terms exact sequence for this family of rings.

Here is a detailed outline of the article.

In Section \ref{section:gradedrings}, 
we state our conventions concerning rings and we
generalize some results for unital group (and groupoid) graded rings
to a non-unital situation (see Proposition \ref{objectunitalnonzero},
Proposition \ref{propunital} and Proposition \ref{propstronglygraded}).

In Section \ref{section:objectcrossedproducts}, 
we introduce object crossed products (see Definition \ref{defcrossedproduct}).
We show that these structures can be parametrized by object crossed systems (see Definition \ref{defcrossedsystem}
and Proposition \ref{propringtocrossed}).
We specialize this construction to the case of separable (but not necessarily finite or normal) field extensions
and prove that the resulting ring is simple (see Proposition \ref{crossedproductseparable}).
After that, we obtain criteria for when object crossed products are object skew groupoid rings
and object twisted groupoid rings (see Proposition \ref{propskew} and Proposition \ref{proptwisted}),
thereby generalizing results for group (and groupoid) graded unital rings to a non-unital situation. 

In Section \ref{section:separability}, we obtain a result (see Proposition \ref{propsepidempotent})
on the separability of extensions of rings with enough idempotents which, in turn, 
is used (through a series of propositions) to obtain the following non-unital 
separability result for object unital groupoid graded rings in connection to 
properties of the groups $\G(e) = \{ \si \in \G \mid d(\si)=r(\si)=e \}$ and
local traces ${\rm tr}_e$ defined for $e \in \G_0$ (see Definition \ref{definition:trace}).

\begin{thm}\label{maintheorem1}
If $\G$ is a groupoid and $R$ is a ring which is object unital strongly $\G$-graded, then 
the ring extension $R/R_0$ is separable, if and only if, 
for all $e \in \G_0$, the group $\G(e)$ is finite and $1_{R_e} \in {\rm tr}_e(Z(R_e))$.
\end{thm}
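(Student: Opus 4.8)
The plan is to reduce the groupoid-graded problem to a family of group-graded problems, one for each object, and then invoke the classical separability criterion for (unital) group crossed products. First I would use object unitality to decompose $R$ as a ring with enough idempotents: the set $\{1_{R_e}\}_{e\in\G_0}$ is a set of orthogonal idempotents, and $R_0=\bigoplus_{e\in\G_0}R_e$ is the principal component. For each $e\in\G_0$, consider the ``isotropy'' piece $R^{(e)}:=\bigoplus_{\si\in\G(e)}R_\si$. Since $R$ is strongly $\G$-graded and object unital, one checks that $R^{(e)}$ is a genuine unital ring with identity $1_{R_e}$, graded by the group $\G(e)$, and that it is strongly $\G(e)$-graded with principal component $R_e$. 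One then shows that the separability of $R/R_0$ is equivalent to the simultaneous separability of all the extensions $R^{(e)}/R_e$, for $e\in\G_0$; the ``only if'' direction comes from cutting a separability idempotent by the $1_{R_e}$'s, and the ``if'' direction from assembling the local separability idempotents (using that distinct objects live in orthogonal corners, so the assembled element lies in the right tensor product and the sum, though possibly infinite, is locally finite when applied to any element).

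Next I would invoke the known unital result (stated earlier via the series of propositions in Section~\ref{section:separability}, ultimately resting on the classical theorem for crossed products, cf.\ \cite{reiner1975} and the references therein): a strongly $G$-graded unital ring $S=\bigoplus_{g\in G}S_g$ with principal component $S_e$ is separable over $S_e$ if and only if $G$ is finite and $1_{S_e}\in\operatorname{tr}(Z(S_e))$, where $\operatorname{tr}$ is the associated trace map $\operatorname{tr}(x)=\sum_{g\in G}$ (the $e$-component of a chosen strong-grading decomposition of $1$ conjugated by $x$). Applying this with $S=R^{(e)}$, $G=\G(e)$, $S_e=R_e$, and noting that the trace map on $R^{(e)}$ is precisely the local trace $\operatorname{tr}_e$ of Definition~\ref{definition:trace}, gives: $R^{(e)}/R_e$ is separable if and only if $\G(e)$ is finite and $1_{R_e}\in\operatorname{tr}_e(Z(R_e))$. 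Combining with the reduction step yields the theorem.

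The main obstacle I anticipate is the reduction step, and specifically two technical points within it. First, one must verify that $R^{(e)}$ is actually closed under multiplication and unital: closure is immediate from the grading ($\G(e)$ is a subgroup of $\G$ in the sense that $(\si,\ta)\in\G_2$ whenever $\si,\ta\in\G(e)$), and unitality with identity $1_{R_e}$ follows from object unitality since every $\si\in\G(e)$ has $d(\si)=r(\si)=e$. Second, and more delicate, is handling the possibly infinite unit space $\G_0$: a separability idempotent for $R/R_0$ lives in $R\otimes_{R_0}R$, and I must argue that multiplying by the central-ish idempotents $1_{R_e}\otimes 1_{R_e}$ isolates a separability idempotent for each $R^{(e)}/R_e$, which requires knowing that $1_{R_e}\,R\,1_{R_f}$ is graded only by morphisms $\si$ with $d(\si)=f$, $r(\si)=e$, and in particular $1_{R_e}\,R\,1_{R_e}=R^{(e)}$. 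Conversely, assembling local separability idempotents $p_e\in R^{(e)}\otimes_{R_e}R^{(e)}\subseteq R\otimes_{R_0}R$ into $p=\sum_{e}p_e$ needs the observation that for any $r\in R$ only finitely many $e$ satisfy $1_{R_e}r\neq 0$, so the expression $pr=rp$ makes sense and the separability identities hold termwise. Once these bookkeeping issues with enough-idempotents rings are dispatched—using Proposition~\ref{propsepidempotent} as the main tool—the rest is a direct translation of the classical crossed-product separability theorem.
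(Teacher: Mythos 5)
Your overall strategy --- reduce to the isotropy groups and apply the group-graded criterion --- is essentially the equivalence (ii)$\Leftrightarrow$(iv) recorded in the paper's Proposition \ref{separableprop}, but the reduction step, which you yourself flag as the main obstacle, has a genuine gap in the direction you describe as ``assembling the local separability idempotents''. The condition from Proposition \ref{propsepidempotent}(iii) is not merely that each $x_e$ commutes with $1_{R_e}R1_{R_e}=R_{\G(e)}$; it is the cross-object compatibility $x_e a = a x_f$ for all $a \in 1_{R_e} R 1_{R_f}$, i.e.\ for homogeneous $a$ of degree $\rho\in\G(f,e)$ with $e\neq f$. Independently chosen separability elements $p_e$ for the extensions $R_{\G(e)}/R_e$ have no reason to satisfy this. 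Concretely, take $\G=I\times I$ the pair groupoid on $I=\{1,2\}$ and $R=M_2(k)$ with $R_{(i,j)}=kE_{ij}$: each isotropy group is trivial, so $p_e=E_{ee}\otimes E_{ee}$ is the only choice, yet $p_1E_{12}=E_{11}\otimes E_{12}$ while $E_{12}p_2=E_{12}\otimes E_{22}$, and these are distinct in $R\otimes_{R_0}R$. Hence $\sum_e p_e$ is not a separability element even though $R/R_0$ is separable (with $x_1=E_{11}\otimes E_{11}$ and $x_2=E_{21}\otimes E_{12}$). The real issue is not local finiteness of the sum but coherence along the connected components of $\G$. The paper resolves it by choosing, for each component, one representative $\omega$ and one $r_\omega\in Z(R_\omega)$ with ${\rm tr}_\omega(r_\omega)=1_{R_\omega}$, and transporting it to every $e\sim\omega$ via $x_e=\sum_{\si\in\G(\omega,e)}\gamma_\si(r_\omega)w_\si$; the identity $r\,w_\ta=w_{\si\ta}\,r$ of Proposition \ref{niceequality} is exactly what makes the cross-object relation $x_e s=s x_f$ hold.

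A second, smaller gap is in the ``only if'' half of your reduction: cutting a separability element by the $1_{R_e}$ does not directly yield one for $R_{\G(e)}/R_e$, because $1_{R_e}(R\otimes_{R_0}R)1_{R_e}$ decomposes as $\bigoplus_f 1_{R_e}R1_{R_f}\otimes_{R_f}1_{R_f}R1_{R_e}$ and the multiplication map applied to the cut element collects contributions from all $f$, not only $f=e$. The paper instead analyses the cut element degree by degree, uses Proposition \ref{uniqueness} to show $c_{\ta\si}^{(i)}=\gamma_\ta(c_\si^{(i)})$, deduces finiteness of $\G(e)$ from the nonvanishing of some component, and sums over $\ta\in\G(e)$ to realize $1_{R_e}$ as a trace. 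Your appeal to the classical group-graded criterion for the isotropy pieces is fine once condition (v) of Proposition \ref{separableprop} is established, but the bridge between global separability and the local data genuinely requires the transport construction rather than a cut-and-paste of separability idempotents.
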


Here we let $Z(R_e)$ denote the center of $R_e$, that is the set of all elements of $R_e$
that commute all other elements of $R_e,$ and we put $R_0=\oplus_{e\in \G_0}R_e$.
In the same section, we specialize Theorem \ref{maintheorem1} to object crossed products in the following result.

\begin{thm}\label{maintheorem2}
If $\G$ is a groupoid and $R = A \rtimes^{\alpha}_{\beta} \G$ is an object crossed product, 
then the ring extension $R/R_0$ is separable, if and only if,
for all $e \in \G_0$, the group $\G(e)$ is finite and
there exists $a \in A_e$ such that $\sum_{\si \in \G(e)} \alpha_\si(a) = 1_{A_e}$.
\end{thm}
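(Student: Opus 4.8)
The plan is to deduce this from Theorem \ref{maintheorem1} by translating the two conditions appearing there into the language of an object crossed product $R = A \rtimes^{\alpha}_{\beta} \G$. Recall that in such a structure one has $R_\si = A_{r(\si)} u_\si$ for each $\si \in \G$, with multiplication governed by the twisting cocycle $\beta$ and the action $\alpha$ via a relation of the form (\ref{multiplication}); in particular $R_0 = \oplus_{e \in \G_0} A_e u_{\mathrm{id}_e}$ is identified with $\oplus_{e \in \G_0} A_e$, and $R$ is object unital strongly $\G$-graded. Thus Theorem \ref{maintheorem1} applies verbatim, and the only work is to show that, for a fixed $e \in \G_0$, the condition $1_{R_e} \in \mathrm{tr}_e(Z(R_e))$ is equivalent to the existence of $a \in A_e$ with $\sum_{\si \in \G(e)} \alpha_\si(a) = 1_{A_e}$ (the finiteness of $\G(e)$ being common to both statements).

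First I would unwind the definition of the local trace $\mathrm{tr}_e$ (Definition \ref{definition:trace}): for $x \in R_e$ it should be $\mathrm{tr}_e(x) = \sum_{\si \in \G(e)} \si \cdot x$, where the action of $\si \in \G(e)$ on $R_e$ is the natural one coming from the strong grading (conjugation by a unit in $R_\si$, which makes sense since $d(\si) = r(\si) = e$). Under the identification $R_e \cong A_e$, and using that $\beta$ is a cocycle, this conjugation action restricted to $A_e$ should coincide (up to inner automorphisms of $A_e$ that do not affect the computation on the center) with $\alpha_\si$. So I would next show that the center $Z(R_e)$, viewed inside $A_e$, and the element $1_{R_e} = 1_{A_e} u_{\mathrm{id}_e}$ correspond, and that $\mathrm{tr}_e$ restricted to $Z(A_e)$ is exactly $z \mapsto \sum_{\si \in \G(e)} \alpha_\si(z)$. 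Granting this, $1_{R_e} \in \mathrm{tr}_e(Z(R_e))$ becomes: there is $a \in Z(A_e)$ with $\sum_{\si \in \G(e)} \alpha_\si(a) = 1_{A_e}$.

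It then remains to remove the restriction ``$a \in Z(A_e)$'' and replace it by ``$a \in A_e$'', i.e.\ to check the two formulations are equivalent. One direction is trivial. For the other, given arbitrary $a \in A_e$ with $\sum_{\si} \alpha_\si(a) = 1_{A_e}$, I would argue that the sum $\sum_\si \alpha_\si(a)$ is automatically central: since the family $\{\alpha_\si\}_{\si \in \G(e)}$ is, up to inner automorphisms, closed under composition (as $\G(e)$ is a group and $\beta$ is a cocycle), conjugating the whole sum by any unit of $A_e$ permutes the summands up to these inner corrections, so a standard averaging/rearrangement argument shows the value lies in $Z(A_e)$; hence $1_{A_e} \in Z(A_e)$ trivially, and one can produce a central preimage by symmetrizing $a$ if needed. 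This last centrality point is the step I expect to be the main obstacle, since it requires handling the inner automorphisms introduced by a genuinely non-trivial cocycle $\beta$ carefully rather than assuming $\alpha$ is an honest action; I would isolate it as a short lemma (or invoke the analogous reasoning already used in the proof of Theorem \ref{maintheorem1}). Once this is in place, combining with Theorem \ref{maintheorem1} and quantifying over all $e \in \G_0$ yields the stated equivalence.
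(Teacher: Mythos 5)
Your overall route coincides with the paper's: $R=A\rtimes^{\alpha}_{\beta}\G$ is object unital and strongly graded, so Theorem \ref{maintheorem1} (i.e.\ Proposition \ref{separableprop}) applies directly, and on $Z(R_e)\cong Z(A_e)$ the maps $\gamma_\si$ reduce to $\alpha_\si$ because the inner twist coming from $\beta_{\si,\si^{-1}}$ acts trivially on central elements; hence ${\rm tr}_e=\sum_{\si\in\G(e)}\alpha_\si$ on $Z(A_e)$. Up to this point your argument is exactly the paper's one-line reduction in Proposition \ref{separablecrossed}.

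The step you yourself flag as ``the main obstacle'' --- replacing ``there exists $a\in Z(A_e)$ with $\sum_{\si\in\G(e)}\alpha_\si(a)=1_{A_e}$'' by the same condition with an arbitrary $a\in A_e$ --- is a genuine gap, and your proposed fix does not close it. Observing that the value $\sum_\si\alpha_\si(a)=1_{A_e}$ is central is vacuous; what Proposition \ref{separableprop}(v) requires is a \emph{central preimage}, and ``symmetrizing $a$'' does not produce one in general. Indeed the two conditions are not equivalent: let $\G=\G(e)=\mathbb{Z}/2\mathbb{Z}$, $A_e=M_2(\mathbb{F}_2)$, $\beta$ trivial, and let the nonidentity element $g$ act by conjugation by $P=E_{12}+E_{21}$ (matrix units $E_{ij}$; note $P^2=1$, so this is an honest action and $R$ is an object skew groupoid ring). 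Then $a=E_{11}$ satisfies $a+\alpha_g(a)=E_{11}+E_{22}=1_{A_e}$, but $Z(A_e)=\mathbb{F}_2\cdot 1$ and ${\rm tr}_e$ on it is multiplication by $2=0$, so $1_{A_e}\notin{\rm tr}_e(Z(A_e))$ and by Proposition \ref{separableprop} the extension $R/R_0$ is \emph{not} separable (one can also verify directly that no separability element exists). So the literal statement with ``$a\in A_e$'' cannot be proved; it must be read with $a\in Z(A_e)$, which is how the paper implicitly uses it --- its formula ${\rm tr}_e(a)=\sum_{\si\in\G(e)}\alpha_\si(a)$ is only asserted for $a\in Z(A_e)$, and its proof never attempts the passage you describe. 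With that correction your argument is complete and identical to the paper's; without it, the final step fails.
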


This is, in turn, specialized to obtain separability results for object skew group rings,
object twisted group rings and groupoid rings, in particular infinite matrix rings
(see Proposition \ref{separablecrossed}, Proposition \ref{separabletwisted} and Proposition \ref{separablematrices}). 
The paper ends with the following separability result for object crossed products
(see Example \ref{crossedproductseparable} and Proposition \ref{maincrossed}).

\begin{thm}\label{maintheorem3}
Suppose that $L/K$ is a separable (not necessarily finite) field extension. Then the object crossed
product $R=(L/K,\beta)$ is separable over $R_0$, if and only if, ${\rm Aut}_K(L)$ is finite. 
In particular, if $L/K$ is Galois, then $R$ is separable over $R_0 = L$, if and only if, 
$L/K$ is finite.
\end{thm}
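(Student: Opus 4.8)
The plan is to realize $R=(L/K,\beta)$ as an object crossed product and then apply Theorem \ref{maintheorem2}. Recall from Example \ref{crossedproductseparable} that, after fixing a normal closure $N$ of $L/K$, if $\{ L_i \}_{i \in I}$ denotes the (possibly infinite) family of conjugate fields of $L$ inside $N$ and $\G$ denotes the groupoid whose unit space $\G_0$ is identified with $I$ and whose morphism set $G_{ij}$ from $j$ to $i$ consists of the $K$-isomorphisms $L_j \to L_i$, then $R = A \rtimes^{\alpha}_{\beta} \G$ is an object crossed product with $A_i = L_i$ for $i \in \G_0$ and with $\alpha_\si$ the field isomorphism $\si$ itself. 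In particular Theorem \ref{maintheorem2} applies to $R$.

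First I would unwind the isotropy groups. For $i \in \G_0$ we have $\G(i) = \{ \si \in \G \mid d(\si)=r(\si)=i \} = {\rm Aut}_K(L_i)$, and since every conjugate field $L_i$ is $K$-isomorphic to $L$, the group $\G(i)$ is isomorphic to ${\rm Aut}_K(L)$ for \emph{every} $i \in \G_0$. Hence the first requirement of Theorem \ref{maintheorem2}, that $\G(i)$ be finite for all $i \in \G_0$, is equivalent to the single condition that ${\rm Aut}_K(L)$ be finite.

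Next I would show that the second requirement of Theorem \ref{maintheorem2} is automatic as soon as ${\rm Aut}_K(L)$ is finite. Fix $i \in \G_0$, set $H = \G(i) = {\rm Aut}_K(L_i)$ and let $F = L_i^H$ be its fixed field. Then, by Artin's theorem, $L_i / F$ is a finite Galois extension with Galois group $H$, so the trace $a \mapsto \sum_{\si \in H} \si(a) = {\rm Tr}_{L_i/F}(a)$ is a surjective $F$-linear map onto $F$, because $L_i/F$ is separable; in particular there is $a \in A_i = L_i$ with $\sum_{\si \in \G(i)} \alpha_\si(a) = 1_{A_i}$. Therefore, if ${\rm Aut}_K(L)$ is finite then both conditions of Theorem \ref{maintheorem2} hold and $R/R_0$ is separable. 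Conversely, if ${\rm Aut}_K(L)$ is infinite, then $\G(i)$ is infinite for each $i \in \G_0$ and Theorem \ref{maintheorem2} shows that $R/R_0$ is not separable. This proves the first assertion.

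For the ``in particular'' clause, if $L/K$ is Galois then $L$ has a unique conjugate field, $\G$ is the group ${\rm Gal}(L/K)$, $R_0 = R_e = L$, and ${\rm Aut}_K(L) = {\rm Gal}(L/K)$; by the first part $R$ is separable over $L$ if and only if ${\rm Gal}(L/K)$ is finite, and by Artin's theorem this holds precisely when $[L:K] < \infty$. I expect the only point requiring care to be the correct transcription of the data of Example \ref{crossedproductseparable} into the hypotheses of Theorem \ref{maintheorem2} — especially the uniform identification $\G(i) \cong {\rm Aut}_K(L)$ valid for \emph{all} objects $i$, so that the possibly infinite cardinality of $\G_0$ plays no role — together with the remark that the second condition of Theorem \ref{maintheorem2} comes for free from separability of $L_i / L_i^{\G(i)}$; the classical inputs (surjectivity of the trace of a finite separable extension, Artin's theorem) need no further comment.
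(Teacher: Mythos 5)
Your proposal is correct and follows essentially the same route as the paper: the paper deduces the result from Proposition \ref{separableprop} (of which Theorem \ref{maintheorem2} is the crossed-product specialization you invoke), identifying the isotropy groups with ${\rm Aut}_K(L)$ and using surjectivity of the trace of the finite separable extension $L/L^{H}$ for $H={\rm Aut}_K(L)$. Your explicit appeal to Artin's theorem merely makes precise a step the paper leaves implicit.
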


To the knowledge of the authors of the present article,
there are very few structural results concerning separability
of {\it non-unital} and {\it noncommutative} ring extensions in the literature.
In fact, the only article addressing this that we have found is the paper \cite{brzezinski2005}
by Brzezinski, Kadison and Wisbauer.
However, whereas the results in \cite{brzezinski2005} discuss theoretical connections between 
separablility of $A$-rings and $A$-corings (see \cite[Theorem 2.6]{brzezinski2005}), 
our results can be used to easily construct large families of 
concrete examples of non-unital separable ring extensions.
Note that a more restricted form of separability of non-unital algebras, defined over unital commutative rings,
has been considered by Taylor \cite{taylor1982}, while, 
in the context of co-algebras, the study of the splitting prperty for the  
comultiplication  map have been treated in \cite{B-GT}.
For a thorough account of the historical development of the notion of separability,
see the paper \cite{wisbauer2016} by Wisbauer.

\section{Preliminaries}\label{section:gradedrings}

In this section, we state our conventions concerning rings and modules. Thereafter, we show
some generalizations of classical results for unital group (and groupoid) graded rings
to a non-unital situation (see Proposition \ref{objectunitalnonzero},
Proposition \ref{propunital} and Proposition \ref{propstronglygraded}).
Note that parts of these results have previously been obtained in 
\cite[Proposition 3.1]{oinert2012}. We have, nonetheless, for the convenience 
of the reader, chosen to include them with complete (but slightly different) proofs.

\subsection*{Rings}
Throughout this section, $R$ denotes an associative but not necessarily unital ring.
If $X,Y \subseteq R$, then we let $XY$ denote
the set of finite sums of elements of the form $xy$ for $x \in X$ and $y \in Y$. 
The ring $R$ is called {\it idem\-potent} if $R R = R$.
Following Fuller in \cite{Fu} we say that $R$ has {\it enough idempotents} if there exists a set 
$\{ e_i \}_{i \in I}$ of orthogonal idempotents in $R$ (called a complete set of idempotents for $R$) 
such that $R = \bigoplus_{i\in I} R e_i = \bigoplus_{i \in I} e_i R.$
Following \'{A}nh and M\'{a}rki in \cite{AnM} we say that $R$ is {\it locally unital} if for all $n \in \N$ and all
$r_1,\ldots , r_n \in R$ there is an idempotent $e \in R$ such that for all $i \in \{1, \ldots , n\}$
the equalities $e r_i = r_i e = r_i$ hold.
The ring $R$ is called {\it s-unital} if for all $r \in R$ the relation $r \in Rr \cap rR$ holds.
The following chain of implications hold (see e.g. \cite{Ny2019}) for all rings
\begin{equation}\label{chain}
\begin{array}{rcl}
\mbox{unital} & \Rightarrow & \mbox{enough idempotents} \\
              & \Rightarrow & \mbox{locally unital} \\
              & \Rightarrow & \mbox{s-unital} \\
              & \Rightarrow & \mbox{idempotent.}
\end{array}
\end{equation}
We let RING (URING) denote the category having (unital) rings as objects 
and ring homomorphisms (that respect multiplicative identities) as morphisms.
If $R$ is a unital ring, then we let $U(R)$ denote the multiplicative group of invertible elements of $R$.

\subsection*{Modules}

Let $A$ be a ring. By a left $A$-module we mean an additive group $M$
equipped with a biadditive map $A \times M \ni (a,m) \mapsto am \in M$ satisfying
$a_1 ( a_2 m ) = (a_1 a_2) m$ for $a_1,a_2 \in A$ and $m \in M$.
Unless otherwise stated, we always assume that modules are {\it unitary} in the sense that $AM=M$.
Analogously, right modules are defined. Let $A'$ be another ring. If $M$ is both a left $A$-module and a right $A'$-module, 
then we say that $M$ is an $A$-$A'$-bimodule if $(am)a' = a(ma')$ for $a \in A$, $a' \in A'$ and $m \in M$.  
If $M$ and $N$ are left $A$-modules, then we let ${\rm Hom}_A(M,N)$ denote the additive group 
of left $A$-module homomorphisms.
Let $A/B$ be a ring extension. By this we mean that $B$ is a ring with $A \supseteq B$.
The ring $A$ is considered a $B$-bimodule in the natural sense.
We let $\varphi : B \rightarrow A$ denote the inclusion map. To $\varphi$ we associate the following functors.
The {\it restriction} functor $\varphi_* : A\mbox{-mod} \to B\mbox{-mod}$ which to a left $A$-module
associates its natural structure as a left $B$-module.
The {\it induction} functor $\varphi^* : B\mbox{-mod} \to A\mbox{-mod}$ which to a left $B$-module $M$
associates the left $A$-module $A \otimes_B M$.
In Section \ref{section:separability}, we study separability of these functors 
in the context of groupoid graded rings.

\subsection*{Graded rings}
For the rest of this section, $\G$ denotes a groupoid and $R$ is a ring graded by $\G$.
We let $\G$-RING ($\G$-URING) denote the category having (unital) $\G$-graded rings as objects and graded ring homomorphisms
(that respect multiplicative identities) as morphisms.
The set $H(R)=\bigcup_{\si \in \G} R_\si$ is called the set of \emph{homogeneous elements of $R$}. 
If $r \in R_\si \setminus \{ 0 \}$, then we say that $r$ is of \emph {degree $\si$} and write $\deg(r)=\si$. 
Any $r \in R$ has a unique decomposition $r = \sum_{\si \in \G} r_{\si}$, where
$r_{\si} \in R_{\si}$, for $\si \in \G$, and all but finitely many of the $r_{\si}$ are zero.  
Recall from the introduction, that we say that $R$ is {\it object unital} if for all $e \in \G_0$ the ring $R_e$ is unital and 
for all $\si \in \G$ and all $r \in R_{\si}$ the equalities $1_{R_{r(\si)}} r = r 1_{R_{d(\si)}} = r$ hold.
Note that if $R$ is object unital, then $R$ has enough idempotents given by $1_{R_e}$, for $e \in \G_0$. 
In that case, from (\ref{chain}), it follows that $R$ is also locally unital, s-unital and idempotent.
We let O-$\G$-RING denote the category having object unital $\G$-graded rings as objects and as morphisms
graded ring homomorphisms $f : R \to S$ satisfying $f( 1_{R_e} ) = 1_{S_e}$ for $e \in \G_0$.
Note also that if $\G$ is a group, then
\[
\mbox{O-$\G$-RING = $\G$-URING}
\]
and if $\G$ is the trivial group, then 
\[
\mbox{URING = O-$\G$-RING = $\G$-URING.}
\]
To state the first result, we need some notions concerning groupoids.
Recall that a non-empty subset $\mathcal{H}$ of $\G$ is called a subgroupoid of $\G$ if
(i) $h \in \mathcal{H}$ $\Rightarrow$ $h^{-1} \in \mathcal{H}$, and
(ii) $h_1,h_2 \in \mathcal{H} \cap \G_2$ $\Rightarrow$ $h_1 h_2 \in \mathcal{H}$.
In that case, $\mathcal{H}$ is called a wide subgroupoid of $\G$ if $\mathcal{H}_0 = \G_0$.

The following result generalizes \cite[Proposition 2.1.1]{lundstrom2004} from the unital case
to the object unital situation.

\begin{prop}\label{objectunitalnonzero}
If $R$ is object unital and we put 
\[
\G' = \{ \si \in G \mid 1_{R_{r(\si)}} \neq 0 \ \mbox{and} \ 1_{R_{d(\si)}} \neq 0 \},
\] 
then $\G'$ is a subgroupoid of $\G$ and $R = \oplus_{\si \in \G'} R_{\si}$.
The subgroupoid $\G'$ is wide if and only if for all $e \in \G_0$ the element $1_e$ is non-zero. 
\end{prop}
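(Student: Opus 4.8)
The plan is to verify the two defining axioms of a subgroupoid for $\G'$, then establish the direct sum decomposition, and finally characterize when $\G'$ is wide. The underlying observation driving everything is the following consequence of object unitality: for $\si \in \G$, every element $r \in R_\si$ satisfies $r = 1_{R_{r(\si)}} r$, so if $1_{R_{r(\si)}} = 0$ then $R_\si = \{0\}$; symmetrically, if $1_{R_{d(\si)}} = 0$ then $R_\si = \{0\}$. Hence $R_\si \neq \{0\}$ forces $\si \in \G'$, which is the key to the decomposition statement.

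First I would check that $\G'$ is a subgroupoid. For closure under inversion, note that $d(\si^{-1}) = r(\si)$ and $r(\si^{-1}) = d(\si)$, so the two conditions defining membership in $\G'$ are simply interchanged; thus $\si \in \G'$ iff $\si^{-1} \in \G'$. For closure under composition, suppose $\si, \ta \in \G' \cap \G_2$. Then $r(\si\ta) = r(\si)$ and $d(\si\ta) = d(\ta)$ (a standard groupoid identity), so $1_{R_{r(\si\ta)}} = 1_{R_{r(\si)}} \neq 0$ and $1_{R_{d(\si\ta)}} = 1_{R_{d(\ta)}} \neq 0$, whence $\si\ta \in \G'$. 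Since $R$ is nontrivial there is at least one $\si$ with $R_\si \neq \{0\}$, giving $\G' \neq \emptyset$.

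Next I would prove $R = \oplus_{\si \in \G'} R_\si$. Since $R = \oplus_{\si \in \G} R_\si$ and $\G' \subseteq \G$, it suffices to show $R_\si = \{0\}$ for every $\si \in \G \setminus \G'$. But $\si \notin \G'$ means $1_{R_{r(\si)}} = 0$ or $1_{R_{d(\si)}} = 0$, and by the observation above either of these forces $R_\si = \{0\}$, as required. The sum remains direct because it is a sub-collection of the original direct sum.

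Finally, for the wideness criterion: by definition $\G'$ is wide iff $\G'_0 = \G_0$, i.e. iff every $e \in \G_0$ lies in $\G'$. For $e \in \G_0$ we have $d(e) = r(e) = e$, so $e \in \G'$ iff $1_{R_e} \neq 0$. Hence $\G'$ is wide iff $1_{R_e} \neq 0$ for all $e \in \G_0$, which is the asserted statement (writing $1_e$ for $1_{R_e}$). There is no serious obstacle here; the only point requiring a little care is making sure the groupoid identities $r(\si\ta) = r(\si)$, $d(\si\ta) = d(\ta)$, $d(\si^{-1}) = r(\si)$ are invoked correctly from the axioms (iii) and (iv) given in the introduction, and that one records at the outset that $\G'$ is nonempty so that it genuinely qualifies as a subgroupoid.
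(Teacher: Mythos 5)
Your proposal is correct and follows essentially the same route as the paper: the subgroupoid property comes from the identities $d(\si^{-1})=r(\si)$, $r(\si^{-1})=d(\si)$, $r(\si\ta)=r(\si)$, $d(\si\ta)=d(\ta)$, and the decomposition comes from the observation that object unitality gives $R_\si = 1_{R_{r(\si)}} R_\si 1_{R_{d(\si)}} = \{0\}$ whenever $\si \notin \G'$. Your explicit remark that $\G'$ must be nonempty (assuming $R$ is nontrivial) is a small point of care the paper leaves implicit.
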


\begin{proof}
The first statement follows immediately from the fact that
if $( \si,\ta ) \in \G_2$, then $d( \si^{-1} ) = r( \si )$, $r( \si^{-1} ) = d( \si )$,
$r( \si \ta ) = r( \si )$ and $d( \si \ta ) = d( \ta )$.
The equality $R = \oplus_{\si \in \G'} R_{\si}$ follows from the following chain of equalities
\[
\begin{array}{rcl}
R & = & \oplus_{\si \in \G} R_\si \\
  & = & ( \oplus_{\si \in \G'} R_\si) \oplus ( \oplus_{\si \in \G \setminus \G'} R_\si ) \\
  & = & ( \oplus_{\si \in \G'} R_\si) \oplus ( \oplus_{\si \in \G \setminus \G'} 1_{R_{r(\si)}} R_\si 1_{R_{d(\si)}}  ) \\
  & = & \oplus_{\si \in \G'} R_{\si}.
\end{array}
\]
The last part follows immediately.
\end{proof}

In light of Proposition \ref{objectunitalnonzero}, we will from now on make the following 
\begin{ass} 
If $R$ is object unital, then for all $e \in \G_0$, $1_{R_e} \neq 0$.
\end{ass}
To state the next two results, we need some more notions.
If $X$ is a subset of $\G$, then we put $R_X = \oplus_{x \in X} R_x$.
Suppose that $\mathcal{H}$ is a subgroupoid of $\G$. Then $R_{\mathcal{H}}$ is a subring of $R$.
Indeed, if $R$ is an object in $\G$-RING ($\G$-URING or O-$\G$-RING), then
$R_{\mathcal{H}}$ is an object in $\mathcal{H}$-RING ($\mathcal{H}$-URING or O-$\mathcal{H}$-RING). 
Suppose that $E$ is a subset of $\G_0$.
We put 
\[
\G(E) = \{ \si \in \G \mid \mbox{$d(\si) \in E$ and $r(\si) \in E$} \}.
\]
It is easy to see that $\G(E)$ is a subgroupoid of $\G$ which we call the {\em principal groupoid associated to} $E$. 
If $e \in E$, then $\G( \{ e \} )$ is a group which we call the {\em principal group associated} $e$
and we denote this group by $\G(e)$.

The next result generalizes \cite[Proposition 1.1.1]{N82} (and \cite[Proposition 2.1.1]{lundstrom2004})
from the unital group (and groupoid) graded case to a non-unital situation.

\begin{prop}\label{propunital}
The following are equivalent:
\begin{itemize}

\item[(i)] the ring $R$ is object unital; 

\item[(ii)] for all finite subsets $E$ of $\G_0$ the ring $R_{\G(E)}$ is unital.

\end{itemize}
In that case, if $E$ is a finite subset of $\G_0$, then $1_{R_{\G(E)}} = \sum_{e \in E} 1_{R_e}$.
\end{prop}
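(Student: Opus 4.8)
The plan is to prove the two implications separately, and then extract the formula for the identity as a byproduct of the $(i) \Rightarrow (ii)$ direction.

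First I would prove $(i) \Rightarrow (ii)$. Assume $R$ is object unital and let $E \subseteq \G_0$ be finite; set $\varepsilon = \sum_{e \in E} 1_{R_e}$. The elements $1_{R_e}$, $e \in E$, are orthogonal idempotents (they are idempotents since each $R_e$ is unital, and orthogonal because $1_{R_e} 1_{R_f} \in R_e R_f = \{0\}$ when $e \neq f$, using that $(e,f) \notin \G_2$ for distinct units). Hence $\varepsilon$ is an idempotent. Now take any homogeneous $r \in R_\si$ with $\si \in \G(E)$, so $d(\si), r(\si) \in E$. Then $\varepsilon r = \sum_{e \in E} 1_{R_e} r$; by object unitality $1_{R_{r(\si)}} r = r$, and for $e \neq r(\si)$ one has $1_{R_e} r \in R_e R_\si = \{0\}$ since $(e, \si) \notin \G_2$ (as $e \neq r(\si)$). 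Thus $\varepsilon r = r$, and symmetrically $r \varepsilon = r$ using $r 1_{R_{d(\si)}} = r$. Since $R_{\G(E)} = \oplus_{\si \in \G(E)} R_\si$ is spanned additively by such homogeneous elements, $\varepsilon$ is a two-sided identity for $R_{\G(E)}$. This also proves the final formula $1_{R_{\G(E)}} = \sum_{e \in E} 1_{R_e}$.

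Next I would prove $(ii) \Rightarrow (i)$. Assume $R_{\G(E)}$ is unital for every finite $E \subseteq \G_0$. Fix $e \in \G_0$. Applying the hypothesis to the singleton $E = \{e\}$, the ring $R_{\G(\{e\})} = R_{\G(e)}$ is unital; I must first check that its identity $\varepsilon$ actually lies in $R_e$. Since $e \in \G(e)$ (indeed $\G(e)$ is a group with identity $e$, as noted in the excerpt), write $\varepsilon = \sum_{\si \in \G(e)} \varepsilon_\si$ with $\varepsilon_\si \in R_\si$. For a fixed $\ta \in \G(e)$ and any $r \in R_\ta$, comparing homogeneous components of degree $\ta$ in $\varepsilon r = r$ forces $\varepsilon_e r = r$ (the only way to land in $R_\ta$ from $\varepsilon_\si r \in R_{\si\ta}$ with $\si \in \G(e)$ is $\si = e$, since $\G(e)$ is a group so $\si \ta = \ta \Leftrightarrow \si = e$); similarly $r \varepsilon_e = r$. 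So $\varepsilon_e$ is already a two-sided identity for $R_e$, hence $R_e$ is unital with $1_{R_e} = \varepsilon_e$, and moreover $\varepsilon = 1_{R_e}$ (apply $\varepsilon r = r \varepsilon = r$ with $r = \varepsilon_e$ to kill the other components — or just note $R_{\G(e)}$ unital forces uniqueness). Now let $\si \in \G$ be arbitrary and $r \in R_\si$; put $E = \{d(\si), r(\si)\}$, a finite subset of $\G_0$. Then $\si \in \G(E)$, so $r \in R_{\G(E)}$, and by the already-proved direction $1_{R_{\G(E)}} = 1_{R_{r(\si)}} + 1_{R_{d(\si)}}$ (or just $1_{R_e}$ if $d(\si) = r(\si) = e$). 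Writing out $1_{R_{\G(E)}} r = r$: the term $1_{R_{d(\si)}} r$ lies in $R_{d(\si)} R_\si$, which is $\{0\}$ unless $d(\si) = r(\si)$ in which case it is absorbed, so in all cases we get $1_{R_{r(\si)}} r = r$; symmetrically $r 1_{R_{d(\si)}} = r$ from $r 1_{R_{\G(E)}} = r$. This is exactly object unitality.

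The one point requiring the most care — the main (minor) obstacle — is the argument in $(ii) \Rightarrow (i)$ that the identity of $R_{\G(e)}$ sits inside the component $R_e$ rather than being spread across $\G(e)$; this is where one genuinely uses that $\G(e)$ is a \emph{group} and a degree-counting argument. Everything else is bookkeeping with the relations $R_\si R_\ta \subseteq R_{\si\ta}$ when $(\si,\ta)\in\G_2$ and $R_\si R_\ta = \{0\}$ otherwise, together with the behavior of $d$ and $r$ under composition and inversion. I would also remark, in passing, that Proposition \ref{objectunitalnonzero} and the standing Assumption guarantee there is no degenerate issue with vanishing identities, so the $1_{R_e}$ are genuinely nonzero orthogonal idempotents.
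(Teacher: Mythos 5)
Your direction (i)$\Rightarrow$(ii) is correct and essentially identical to the paper's, and the first half of your (ii)$\Rightarrow$(i) is also fine: your degree-counting argument showing that the identity of $R_{\G(e)}$ is concentrated in $R_e$ is a legitimate direct proof of the fact the paper simply cites from N{\v a}st{\v a}sescu--Van Oystaeyen.

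There is, however, a genuine gap in the second half of (ii)$\Rightarrow$(i), at the step where you take $E=\{d(\si),r(\si)\}$ and assert that $1_{R_{\G(E)}}=1_{R_{r(\si)}}+1_{R_{d(\si)}}$ ``by the already-proved direction.'' The already-proved direction is (i)$\Rightarrow$(ii), whose proof of that formula uses the hypothesis that $R$ is object unital --- specifically the identity $1_{R_{r(\si)}}r=r$ for homogeneous $r$ of arbitrary degree, which is exactly what you are in the middle of proving. At this point in the argument you only know that each $R_e$ is unital and that $R_{\G(E)}$ is unital; the identification of $1_{R_{\G(E)}}$ with $\sum_{e\in E}1_{R_e}$ under these weaker hypotheses requires its own argument, and it is precisely the part of the proof on which the paper spends the most effort. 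The fix is to write $1_{R_{\G(E)}}=\sum_{\rho\in\G(E)}x_\rho$ and argue directly: either show (as the paper does) that the components $x_\rho$ with $\rho\notin E$ vanish and then that $1_{R_e}=1_{R_e}\,1_{R_{\G(E)}}=1_{R_e}x_e+1_{R_e}x_f=x_e$ (using $1_{R_e}x_f\in R_eR_f=\{0\}$ and that $1_{R_e}$ is the identity of $R_e$), and similarly $x_f=1_{R_f}$; or, alternatively, bypass the identification of $1_{R_{\G(E)}}$ altogether by comparing degree-$\si$ components of $1_{R_{\G(E)}}r=r$ to get $x_{r(\si)}r=r$, and then separately prove $x_{r(\si)}=1_{R_{r(\si)}}$ by comparing components of $1_{R_{\G(E)}}s=s$ for $s\in R_{r(\si)}$. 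Once that step is supplied, the rest of your argument goes through.
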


\begin{proof}
(i)$\Rightarrow$(ii): Take a finite subset $E$ of $\G_0$
and a homogeneous $r \in R_{\si}$ for some $\si \in \G(E)$.
Put $s = \sum_{e \in E} 1_{R_e}$. From object unitality of $R$
it follows that $sr = \sum_{e \in E} 1_{R_e} r = 1_{R_{r(\si)}} r = r$ and
$rs = \sum_{e \in E} r 1_{R_e} = r 1_{R_{d(\si)}} = r$. 
Therefore $R_{\G(E)}$ is unital and $1_{R_{\G(E)}} = s$.

(ii)$\Rightarrow$(i): 
Take $e \in E$. From (ii) we get that $R_{\G(e)}$ is unital. Since $\G(e)$ is a group
it follows from \cite[Prop. 1.1.1.1]{N82} that $R_e$ is unital and $1_{R_{\G(e)}} = 1_{R_e}$.
Take $\si \in \G$ and $r \in R_{\si}$. If $d(\si) = r(\si)=:e$, then $\si \in \G( e )$
so that $1_{R_{r(\si)}} r = 1_{R_e} r = r = 1_{R_e} = 1_{R_{d(\si)}}$.
Now suppose that $d(\si) \neq r(\si)$.
Put $e = d(\si)$, $f = r(\si)$ and $E = \{ e , f \}$.
From (ii) we get that $R_{\G(E)}$ is unital.
Suppose that $1_{R_{\G(E)}} = \sum_{\rho \in \G(E)} x_{\rho}$ for some $x_{\rho} \in R_{\rho}$
and $x_{\rho} = 0$ for all but finitely many $\rho \in \G(E)$.
Take $\epsilon \in \G(E)$. Then 
$x_{\epsilon} = 1_{R_{\G(E)}} x_{\epsilon} = \sum_{\rho \in \G(E)} x_{\rho} x_{\epsilon}$
from which it follows that $x_{\rho} x_{\epsilon} = 0$ if $\rho \in \G(E)$ and $\epsilon \in \G(E) \setminus E$.
Thus if $\epsilon \in \G(E) \setminus E$, then 
$x_{\rho} 1_{R_{\G(E)}} = \sum_{\epsilon \in \G(E)} x_{\rho} x_{\epsilon} = 0$.
Hence $1_{R_{\G(E)}} = x_e + x_f$. But $1_{R_e} = 1_{R_e} 1_{R_{\G(E)}} = 1_{R_e} x_e + 1_{R_e} x_f = x_e$
and analogously $1_{R_f} = x_f$ so we get that $1_{R_{\G(E)}} = 1_{R_e} + 1_{R_f}$.
Thus, finally, we get that 
$1_{R_f} r = 1_{R_f} r + 1_{R_e} r = 1_{R_{\G(E)}} r = r$ and analogously that $r 1_{R_e} = r$.
This proves that $R$ is object unital.

The last part follows from the proof of (i)$\Rightarrow$(ii).
\end{proof}

\begin{rem} 
Notice that for every subgroupoid $\mathcal{H}$ of $\G$  the ring  $R_\mathcal{H}$ is $\G$-graded 
by setting $R_g=0$ if $g\notin \mathcal{H}.$ 
Then, it follows by (ii) of Proposition \ref{propunital} that an object unital graded ring is 
a direct limit of unital graded rings. Indeed,
let $\mathcal{F}$ be the family of finite subsets of $\G_0$.
Then, for all $E \in \mathcal{F}$, $R_{\G(E)}$ is a unital ring, and $\varinjlim_{E \in \mathcal{F}} R_{\G(E)} = R.$
\end{rem}

The following result generalizes \cite[Lemma I.3.2]{N82} (and \cite[Lemma 3.2]{lundstrom2006})
from the unital group (and groupoid) graded case to a non-unital situation.

\begin{prop}\label{propstronglygraded}
If $R$ is object unital, then the following are equivalent: 
\begin{itemize}

\item[(i)] $R$ is strongly graded;
 
\item[(ii)] for all finite subsets $E$ of $\G_0$ the ring $R_{\G(E)}$ is strongly $\G(E)$-graded;
 
\item[(iii)] for all $\si \in \G$ the equality $R_{r(\si)} = R_\si R_{\si^{-1}}$ holds;

\item[(iv)] for all $\si \in \G$ the relation $1_{R_{r(\si)}} \in R_\si R_{\si^{-1}}$ holds.

\end{itemize}
\end{prop}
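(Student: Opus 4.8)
The plan is to prove the equivalence of (i)--(iv) via the cycle (i)$\Rightarrow$(ii)$\Rightarrow$(iii)$\Rightarrow$(iv)$\Rightarrow$(i), exploiting Proposition \ref{propunital} to reduce statements about the (non-unital) ring $R$ to statements about the unital rings $R_{\G(E)}$, where classical results for unital groupoid graded rings apply.

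\textbf{Step 1: (i)$\Rightarrow$(ii).} Assume $R$ is strongly graded, so $R_\si R_\ta = R_{\si\ta}$ for every $(\si,\ta) \in \G_2$. Let $E$ be a finite subset of $\G_0$. By Proposition \ref{propunital}, $R_{\G(E)}$ is unital. If $(\si,\ta)$ is a composable pair in $\G(E)$, then since $\G(E)$ is a subgroupoid the product $\si\ta$ again lies in $\G(E)$, and $R_\si R_\ta = R_{\si\ta}$ holds already in $R$; this is exactly the strong grading condition for $R_{\G(E)}$ as a $\G(E)$-graded ring.

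\textbf{Step 2: (ii)$\Rightarrow$(iii).} Fix $\si \in \G$ and put $E = \{d(\si), r(\si)\}$. Then $\si \in \G(E)$, and $\si^{-1} \in \G(E)$ as well, with $(\si,\si^{-1})$ composable and $\si\si^{-1} = r(\si)$. By (ii), $R_{\G(E)}$ is strongly $\G(E)$-graded, so $R_\si R_{\si^{-1}} = R_{\si\si^{-1}} = R_{r(\si)}$. Here I would invoke \cite[Lemma 3.2]{lundstrom2006} (or its group analogue \cite[Lemma I.3.2]{N82}) applied to the unital ring $R_{\G(E)}$ to pass between the several equivalent formulations of strong grading, but in fact the displayed equality is immediate from the definition.

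\textbf{Step 3: (iii)$\Rightarrow$(iv).} This is trivial: by object unitality $1_{R_{r(\si)}} \in R_{r(\si)} = R_\si R_{\si^{-1}}$.

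\textbf{Step 4: (iv)$\Rightarrow$(i).} This is the main obstacle. Let $(\si,\ta) \in \G_2$; I must show $R_{\si\ta} \subseteq R_\si R_\ta$ (the reverse inclusion is part of being $\G$-graded). Take $r \in R_{\si\ta}$. Note $r(\si) = r(\si\ta)$, so by object unitality $r = 1_{R_{r(\si)}} r$. By (iv) write $1_{R_{r(\si)}} = \sum_k a_k b_k$ with $a_k \in R_\si$, $b_k \in R_{\si^{-1}}$. Then $r = \sum_k a_k (b_k r)$, and $b_k r \in R_{\si^{-1}} R_{\si\ta} \subseteq R_{\si^{-1}\si\ta} = R_{d(\si)\ta} = R_\ta$, using that $(\si^{-1}, \si\ta)$ is composable (since $d(\si^{-1}) = r(\si) = r(\si\ta)$) and the groupoid axiom $d(\si)\ta = \ta$. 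Hence $r \in R_\si R_\ta$, giving $R_{\si\ta} = R_\si R_\ta$, i.e. $R$ is strongly graded. The delicate point throughout Step 4 is bookkeeping the domain/range identities so that all the products written down are actually composable and land in the claimed homogeneous components; once that is in place the argument is a direct computation mirroring the classical unital proof.
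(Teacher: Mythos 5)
Your proposal is correct and follows essentially the same route as the paper: the implications (i)$\Rightarrow$(ii)$\Rightarrow$(iii)$\Rightarrow$(iv) are immediate, and your Step 4 is an element-wise rendering of the paper's chain of inclusions $R_{\si\ta} = 1_{R_{r(\si)}} R_{\si\ta} \subseteq R_\si R_{\si^{-1}} R_{\si\ta} \subseteq R_\si R_{\si^{-1}\si\ta} = R_\si R_\ta$, using exactly the same key observation that $r(\si)=r(\si\ta)$ together with object unitality and hypothesis (iv).
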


\begin{proof}
The implications (i)$\Rightarrow$(ii)$\Rightarrow$(iii)$\Rightarrow$(iv) are clear. 
Now we show (iv)$\Rightarrow$(i). To this end, take $(\si,\ta) \in \G_2$. 
Since $r(\si) = r(\si\ta)$ it follows that $1_{ R_{r(\si)} } = 1_{ R_{ r(\si\ta) } }$.
Thus, from object unitality of $R$, we get that
$R_\si R_\ta \subseteq R_{\si \ta} = 1_{R_{r(\si\ta)}} R_{\si\ta} = 1_{R_{r(\si)}} R_{\si\ta} \subseteq 
R_\si R_{\si^{-1}} R_{\si\ta} \subseteq R_{\si} R_{\si^{-1} \si \ta} = R_\si R_\ta.$
\end{proof}

\section{Object crossed products}\label{section:objectcrossedproducts}

In this section, we introduce object crossed products (see Definition \ref{defcrossedproduct}).
We show that these structures can be parametrized by object crossed systems (see Definition \ref{defcrossedsystem}
and Proposition \ref{propringtocrossed}).
We specialize this construction to the case of separable (but not necessarily finite or normal) field extensions
and prove that the resulting ring is simple (see Proposition \ref{crossedproductseparable}).
After that, we obtain criteria for when object crossed products are object skew groupoid rings
and object twisted groupoid rings (see Proposition \ref{propskew} and Proposition \ref{proptwisted}),
thereby generalizing results for group (and groupoid) graded unital rings to a non-unital situation. 
Throughout this section, $\G$ denotes a groupoid and $R$ denotes a $\G$-graded ring which is object unital.

\begin{defi}
We say that a homogeneous element $r \in R_\si$ is {\it object invertible} if there exists 
$s \in R_{\si^{-1}}$ such that $rs = 1_{R_{r(\si)}}$ and $sr = 1_{R_{d(\si)}}$.
We will refer to $s$ as the {\it object inverse} of $r$.
\end{defi}

The usage of the term ``the object inverse'' is justified by the next result.

\begin{prop}
Object inverses are unique.
\end{prop}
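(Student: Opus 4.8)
The plan is to prove uniqueness by the standard two-sided-inverse argument, adapted to the object-unital setting where the relevant identities are the idempotents $1_{R_{r(\si)}}$ and $1_{R_{d(\si)}}$ rather than a global $1_R$. Suppose $r \in R_\si$ has two object inverses $s, s' \in R_{\si^{-1}}$, so that $rs = rs' = 1_{R_{r(\si)}}$ and $sr = s'r = 1_{R_{d(\si)}}$. Since $d(\si^{-1}) = r(\si)$ and $r(\si^{-1}) = d(\si)$, object unitality of $R$ gives $1_{R_{r(\si)}} s = s\, 1_{R_{d(\si)}} = s$ for the homogeneous element $s \in R_{\si^{-1}}$, and likewise for $s'$.

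\begin{proof}
Let $r \in R_\si$ and suppose that $s, s' \in R_{\si^{-1}}$ both satisfy
$rs = rs' = 1_{R_{r(\si)}}$ and $sr = s'r = 1_{R_{d(\si)}}$.
Since $d(\si^{-1}) = r(\si)$, object unitality of $R$ yields $s = 1_{R_{r(\si)}} s$ and $s' = 1_{R_{r(\si)}} s'$.
Using associativity, we compute
\[
s = 1_{R_{r(\si)}} s = (r s') s = r (s' s) = (r s' ) s,
\]
and on the other hand, since $r(\si^{-1}) = d(\si)$, object unitality gives $s = s\, 1_{R_{d(\si)}}$, so
\[
s = s\, 1_{R_{d(\si)}} = s (r s') = (s r) s' = 1_{R_{d(\si)}} s' = s'.
\]
Hence $s = s'$, which proves that object inverses are unique.
\end{proof}

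The only subtlety, and the one step worth being careful about, is that one cannot simply write $s = 1_R s = (s'r)s$ as in the unital case; one must instead invoke the graded object-unitality axiom to insert the correct local identity $1_{R_{r(\si)}}$ or $1_{R_{d(\si)}}$ on the appropriate side of the homogeneous element $s \in R_{\si^{-1}}$, and then check that the degrees match up ($d(\si^{-1}) = r(\si)$ and $r(\si^{-1}) = d(\si)$) so that these are the identities that $r$, $s$, and $s'$ actually produce. Beyond that bookkeeping, the argument is the classical one-line proof that a two-sided inverse is unique, and there is no real obstacle.
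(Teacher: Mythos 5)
Your overall strategy is the right one (and the one the paper uses): insert a local unit for $s$, replace it by $rs'$, reassociate, and use $sr=1_{R_{d(\si)}}$. But the bookkeeping you yourself single out as the one delicate point is done backwards, and as written several of your displayed equalities are false. For $s\in R_{\si^{-1}}$, object unitality says $1_{R_{r(\si^{-1})}}\, s = s\, 1_{R_{d(\si^{-1})}} = s$; since $r(\si^{-1})=d(\si)$ and $d(\si^{-1})=r(\si)$, this reads $1_{R_{d(\si)}}\, s = s\, 1_{R_{r(\si)}} = s$ --- the opposite of the identities $1_{R_{r(\si)}}\, s = s$ and $s\, 1_{R_{d(\si)}} = s$ that you assert. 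Indeed, when $d(\si)\neq r(\si)$ the pairs $(r(\si),\si^{-1})$ and $(\si^{-1},d(\si))$ are not composable, so the grading forces $1_{R_{r(\si)}}\, s = 0$ and $s\, 1_{R_{d(\si)}} = 0$. Hence the first equality in each of your displays fails for every non-loop $\si$. Your first display is also a dead end for an independent reason: $s's \in R_{\si^{-1}}R_{\si^{-1}} = \{0\}$ unless $\si$ is a loop. And in the second display the step $s\,1_{R_{d(\si)}} = s(rs')$ silently substitutes $rs'$, which equals $1_{R_{r(\si)}}$, not $1_{R_{d(\si)}}$.

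The repair is immediate and recovers the paper's one-line proof: since $d(\si^{-1})=r(\si)$, object unitality gives $s = s\,1_{R_{r(\si)}}$, whence
\[
s \;=\; s\,1_{R_{r(\si)}} \;=\; s(rs') \;=\; (sr)s' \;=\; 1_{R_{d(\si)}}\, s' \;=\; s',
\]
the last step because $r(\si^{-1})=d(\si)$. So the idea is correct, but the written argument needs the local units moved to the correct sides before it is a proof.
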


\begin{proof}
Suppose that $\si \in \G$, $r \in R_\si$ and $s,t \in R_{\si^{-1}}$ satisfy the equalities 
$rs = rt = 1_{R_{r(\si)}}$ and $tr = sr = 1_{R_{d(\si)}}$. 
Then $s = s 1_{R_{d(\si^{-1})}} =
s 1_{R_{r(\si)}} =
s r t =
1_{R_{d(\si)}} t =
1_{R_{c(\si^{-1})}} t = t$.
\end{proof}

\begin{defi}
We let $U^{\rm gr}(R)$ denote the set of all object invertible elements of $R$.
We define a groupoid structure on $U^{\rm gr}(R)$ in the following way.
Take $r,s \in U^{\rm gr}(R)$. The groupoid composition of $r$ and $s$ is defined and equal to $rs$
precisely when $d( {\rm deg}(r) ) = c( {\rm deg}(s) )$. 
The groupoid inverse of $r$ is defined to be the object inverse of $r$.
We put $R_0 = \oplus_{e \in \G_0} R_e$ and we
consider $R_0$ as a $\G$-graded ring in the following sense. 
If $\si \in \G$, then $(R_0)_\si = R_\si$, if $\si \in \G_0$, and $(R_0)_\si = \{ 0 \}$, otherwise.
Then $U^{\rm gr}(R_0)$ equals the direct sum groupoid $\biguplus_{e \in \G_0} U(R_e)$
of the groups $\{ U(R_e) \}_{e \in \G_0}$.
\end{defi}

To state the next proposition, we need some notions concerning groupoids.
Suppose that $\mathcal{H}$ is a groupoid and let $f : \G \to \mathcal{H}$ be a map.
The {\it image} and {\it kernel} of $f$ are defined by respectively
$
{\rm Im}(f) = \{ f(g) \mid g \in \G \}
$
and
$
{\rm Ker}(f) = \{ g \in \G \mid f(g) \in \mathcal{H}_0 \}.
$
The map $f$ is said to be a groupoid homomorphism if for all $g,h \in \G$
with $gh$ defined, $f(g) f(h)$ is also defined and $f(g h) = f(g) f(h)$.
In that case, $f$ is said to be strong if for all $g,h \in \G$
with $f(g)f(h)$ defined, $gh$ is also defined.
It is clear that if $f$ is a (strong) groupoid homomorphism, 
then ${\rm Ker}(f)$ (${\rm Im}(f)$) is a subgroupoid of $\G$ ($\mathcal{H}$).

\begin{prop}\label{kernel}
The degree map restricts to a groupoid homomorphism 
\[
{\rm deg} : U^{\rm gr}(R) \to \G
\]
with kernel equal to $U^{\rm gr}(R_0)$
\end{prop}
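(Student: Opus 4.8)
The plan is to verify directly from the definitions that $\deg$ maps $U^{\mathrm{gr}}(R)$ into $\G$ compatibly with the groupoid structures, and then to identify its kernel. First I would check that $\deg$ is well defined as a map of sets on $U^{\mathrm{gr}}(R)$: every object invertible element $r$ lies in some $R_\si$ with $r \neq 0$ (note that $r = 1_{R_{r(\si)}} r$ forces $r \neq 0$ once we invoke the standing assumption that $1_{R_e} \neq 0$, but actually object invertibility already gives $rs = 1_{R_{r(\si)}} \neq 0$, hence $r \neq 0$), so $\deg(r) = \si$ is unambiguous. Then I would check that the composition of object invertible elements is again object invertible: if $r \in R_\si$, $s \in R_\ta$ are object invertible with $d(\si) = r(\ta)$, with object inverses $r' \in R_{\si\m}$, $s' \in R_{\ta\m}$, then $rs \in R_{\si\ta}$ and $s'r' \in R_{(\si\ta)\m}$ satisfy $(rs)(s'r') = r(ss')r' = r\, 1_{R_{r(\ta)}}\, r' = r\, 1_{R_{d(\si)}}\, r' = r r' = 1_{R_{r(\si)}} = 1_{R_{r(\si\ta)}}$ and symmetrically $(s'r')(rs) = 1_{R_{d(\si\ta)}}$; this shows $U^{\mathrm{gr}}(R)$ is closed under the partial composition and that $\deg(rs) = \si\ta = \deg(r)\deg(s)$.

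Next I would observe that the groupoid composition on $U^{\mathrm{gr}}(R)$ was \emph{defined} to be defined precisely when $d(\deg(r)) = r(\deg(s))$, which is exactly the condition for $\deg(r)\deg(s)$ to be composable in $\G$; hence $\deg$ is a groupoid homomorphism, and moreover it is strong in the sense defined just before the proposition. I would also note that $\deg$ sends the object inverse of $r$ to $(\deg r)\m$, since the object inverse of $r \in R_\si$ lies in $R_{\si\m}$ by definition, so $\deg$ respects inversion as well.

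For the kernel computation, recall ${\rm Ker}(\deg) = \{ r \in U^{\mathrm{gr}}(R) \mid \deg(r) \in \G_0 \}$. If $\deg(r) = e \in \G_0$, then $r \in R_e$ and object invertibility says there is $s \in R_{e\m} = R_e$ with $rs = sr = 1_{R_e}$, i.e. $r \in U(R_e)$; conversely any unit of $R_e$ is object invertible with degree $e$. Thus ${\rm Ker}(\deg) = \biguplus_{e \in \G_0} U(R_e) = U^{\mathrm{gr}}(R_0)$, using the identification of $U^{\mathrm{gr}}(R_0)$ recorded in the preceding definition. I expect the only mildly delicate point to be bookkeeping the domain/range identities $d(\si\m) = r(\si)$, $r(\si\ta) = r(\si)$, $d(\si\ta) = d(\ta)$ when checking closure under composition and matching the $1_{R_e}$'s correctly; everything else is immediate from the definitions.
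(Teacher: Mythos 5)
Your proposal is correct and is exactly the routine verification the paper has in mind when it dismisses the proof as ``clear'': you check closure of $U^{\rm gr}(R)$ under the partial composition, multiplicativity of $\deg$, and identify the kernel with $\biguplus_{e\in\G_0}U(R_e)=U^{\rm gr}(R_0)$, all directly from the definitions. The details (including the use of the standing assumption $1_{R_e}\neq 0$ to see that object invertible elements are nonzero, and the bookkeeping $d(\si\m)=r(\si)$, $r(\si\ta)=r(\si)$, $d(\si\ta)=d(\ta)$) are handled correctly.
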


\begin{proof}
This is clear.
\end{proof}

\begin{defi}\label{defcrossedproduct}
We say that $R$ is an {\it object crossed product}
if for all $\si \in \G$ the relation $U^{\rm gr}(R) \cap R_\si \neq \emptyset$ holds.
Note that this condition is equivalent to saying that the groupoid homomorphism
${\rm deg} : U^{\rm gr}(R) \to \G$ from Proposition \ref{kernel} is surjective.
Proposition \ref{propstronglygraded}(iv) implies that all object crossed products are strongly graded.
\end{defi}

\begin{defi}\label{defcrossedsystem}
Suppose that we are given a collection $A = ( A_e )_{e \in \G_0}$ of non-zero unital rings $A_e$
and put $A_0 = \oplus_{e \in \G_0} A_e$.
For all $e,f \in \G_0$ let $\iso_{e,f}(A)$ denote the set of ring isomorphisms $A_f \to A_e$ (respecting identity elements).
We let $\iso(A)$ denote the disjoint union $\biguplus_{e,f \in \G_0} \iso_{e,f}(A)$ and
we define a groupoid structure on $\iso(A)$ in the following way.
Take $e,e',f,f' \in \G_0$. The partial composition on $\iso(A)$
is defined to be the usual composition of functions
$\iso_{e,f}(A) \times \iso_{e',f'}(A) \to \iso_{e,f'}(A)$, when $f = e'$, and otherwise undefined.
By an {\it object crossed system} we mean a quadruple $( A , \G , \alpha , \beta )$ where
$\alpha : \G \to \iso(A)$ and $\beta : \G_2 \to U^{\rm gr}(A_0)$ are maps satisfying the 
following axioms for all $(\si,\ta,\rho) \in \G_3$ and all $a \in A_{d(\ta)}$
\begin{itemize}

\item[(i)] $\alpha_{\si} : A_{d(\si)} \to A_{r(\si)}$ and $\alpha_e = {\rm id}_{A_e}$ for $e \in \G_0$;

\item[(ii)] $\beta_{\si , \ta} \in U( A_{r(\si)} )$ and $\beta_{\si , d(\si)} = \beta_{ r(\si) , \si } = 1_{A_{r(\si)}}$;

\item[(iii)] $\alpha_{\si} ( \alpha_{\ta} (a) ) = \beta_{\si,\ta} \alpha_{\si \ta} (a) \beta_{\si,\ta}^{-1}$;

\item[(iv)] $\beta_{\si,\ta} \beta_{\si\ta,\rho} = \alpha_{\si}( \beta_{\ta,\rho} ) \beta_{\si,\ta\rho}$.

\end{itemize}
The map $\alpha$ is called a {\it weak action} of $\G$ on $A$ and $\beta$ is called an $\alpha$-{\it cocycle}.
\end{defi}

\begin{defi}\label{defobjectcrossedsystem}
Suppose that $( A , \G , \alpha , \beta )$ is an object crossed system.
Let $\{ u_{\si} \}_{\si \in \G}$ be a copy of $\G$.
We let $A \rtimes^{\alpha}_{\beta} \G$ denote the set of formal sums of the form $\sum_{\si \in \G} a_\si u_\si$
where $a_{\si} \in A_{r(\si)}$, for $\si \in \G$, and $a_{\si} = 0$, for all but finitely many $\si \in \G$.
If $\sum_{\si \in \G} a_\si u_\si$ and $\sum_{\si \in \G} a_\si' u_\si$ are two such formal sums, then their sum is defined to be
\[
\sum_{\si \in \G} a_\si u_\si + \sum_{\si \in \G} a_\si' u_\si = \sum_{\si \in \G} (a_\si + a_\si') u_\si.
\]
The product of two such formal sums is defined to be the additive extension of the relations
\begin{equation}\label{productfirst}
a_\si u_\si \cdot a_{\ta}' u_{\ta} = a_\si \alpha_\si ( a_{\ta} ' ) \beta_{\si,\ta} u_{\si \ta},
\end{equation}
when $(\si,\ta) \in \G_2$, and
\begin{equation}\label{productsecond}
a_\si u_\si \cdot a_{\ta}' u_{\ta} = 0,
\end{equation}
otherwise. For all $\si \in \G$ we put $( A \rtimes^{\alpha}_{\beta} \G )_\si = A_{r(\si)} u_\si$.
It is clear from (\ref{productfirst}) and (\ref{productsecond}) that this defines a $\G$-grading on $A \rtimes^{\alpha}_{\beta} \G$.
\end{defi}

\begin{rem}
The construction in Definition \ref{defobjectcrossedsystem} has previously appeared elsewhere.
Indeed, in \cite{oinert2010} the notion of a {\it Category crossed product} was defined.
\end{rem}

The next result generalizes \cite[Propositions 1.4.1-1.4.2]{nastasescu2004}
from the unital group graded case to a non-unital groupoid graded situation.

\begin{prop}\label{propringtocrossed}
If $( A , \G , \alpha , \beta )$ is an object crossed system, then $A \rtimes^{\alpha}_{\beta} \G$ is an 
object unital groupoid graded ring which is an object crossed product.
Conversely, any object crossed product can be presented in this way.
\end{prop}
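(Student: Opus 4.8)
The plan is to prove the two halves separately. For the forward direction, suppose $(A,\G,\alpha,\beta)$ is an object crossed system and set $R = A \rtimes^{\alpha}_{\beta} \G$ with the grading $R_\si = A_{r(\si)} u_\si$ from Definition \ref{defobjectcrossedsystem}. First I would verify associativity of the multiplication defined by (\ref{productfirst})--(\ref{productsecond}); this reduces, by additivity, to checking $(a_\si u_\si \cdot a'_\ta u_\ta) \cdot a''_\rho u_\rho = a_\si u_\si \cdot (a'_\ta u_\ta \cdot a''_\rho u_\rho)$ on homogeneous elements. When $(\si,\ta,\rho) \notin \G_3$ both sides vanish by (\ref{productsecond}) (using the groupoid axioms to see that if one of $\si\ta$, $\ta\rho$ fails to be defined then so does the relevant triple product), and when $(\si,\ta,\rho) \in \G_3$ the equality comes down precisely to the weak-action identity (iii) and the cocycle identity (iv) of Definition \ref{defcrossedsystem} — this is the routine but essential computation that both axioms were designed to make work. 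Next, object unitality: I claim $1_{R_e} = 1_{A_e} u_e$ for $e \in \G_0$. Using axioms (i) ($\alpha_e = {\rm id}$) and (ii) ($\beta_{\si,d(\si)} = \beta_{r(\si),\si} = 1$), one checks directly from (\ref{productfirst}) that $1_{A_{r(\si)}} u_{r(\si)} \cdot a_\si u_\si = a_\si u_\si = a_\si u_\si \cdot 1_{A_{d(\si)}} u_{d(\si)}$ for $r \in R_\si$, so each $R_e$ is unital and the object-unitality equations hold. Finally, to see $R$ is an object crossed product, note that for each $\si \in \G$ the element $u_\si := 1_{A_{r(\si)}} u_\si \in R_\si$ has object inverse $\beta_{\si^{-1},\si}^{-1} u_{\si^{-1}} \in R_{\si^{-1}}$: indeed $u_\si \cdot \beta_{\si^{-1},\si}^{-1} u_{\si^{-1}} = \alpha_\si(\beta_{\si^{-1},\si}^{-1})\beta_{\si,\si^{-1}} u_{r(\si)}$, which equals $1_{A_{r(\si)}} u_{r(\si)} = 1_{R_{r(\si)}}$ after simplification using (iii) with $\ta = \si^{-1}$, $a = 1$ (together with (iv) and (ii)); symmetrically for the other product. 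Hence $U^{\rm gr}(R) \cap R_\si \neq \emptyset$ for all $\si$, so $R$ is an object crossed product by Definition \ref{defcrossedproduct}.

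For the converse, let $R$ be an arbitrary object crossed product. By definition, for each $\si \in \G$ we may choose an object invertible element $u_\si \in U^{\rm gr}(R) \cap R_\si$, and by convention we take $u_e = 1_{R_e}$ for $e \in \G_0$ (which is legitimate since $1_{R_e}$ is object invertible with object inverse itself). Set $A_e := R_e$, a non-zero unital ring, and $A_0 = R_0 = \oplus_{e \in \G_0} R_e$. I would then define $\alpha_\si : A_{d(\si)} \to A_{r(\si)}$ by $\alpha_\si(a) = u_\si a u_\si^{-1}$ (where $u_\si^{-1}$ denotes the object inverse) and $\beta_{\si,\ta} := u_\si u_\ta u_{\si\ta}^{-1} \in R_{r(\si)}$ for $(\si,\ta) \in \G_2$. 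One checks: $\alpha_\si$ is a well-defined ring isomorphism respecting identities with $\alpha_e = {\rm id}$ (axiom (i)); $\beta_{\si,\ta}$ is invertible in $A_{r(\si)}$ with inverse $u_{\si\ta} u_\ta^{-1} u_\si^{-1}$, and the normalization $\beta_{\si,d(\si)} = \beta_{r(\si),\si} = 1_{A_{r(\si)}}$ follows from the choice $u_e = 1_{R_e}$ (axiom (ii)); axiom (iii) is a direct manipulation $\alpha_\si(\alpha_\ta(a)) = u_\si u_\ta a u_\ta^{-1} u_\si^{-1} = (u_\si u_\ta u_{\si\ta}^{-1}) u_{\si\ta} a u_{\si\ta}^{-1} (u_{\si\ta} u_\ta^{-1} u_\si^{-1}) = \beta_{\si,\ta}\alpha_{\si\ta}(a)\beta_{\si,\ta}^{-1}$; and axiom (iv) is the "$2$-cocycle" computation comparing the two ways of reassociating $u_\si u_\ta u_\rho$, valid for $(\si,\ta,\rho) \in \G_3$ by associativity in $R$. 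Thus $(A,\G,\alpha,\beta)$ is an object crossed system. It remains to produce an isomorphism $A \rtimes^{\alpha}_\beta \G \to R$; the natural map is the additive extension of $a_\si u_\si \mapsto a_\si u_\si$ (interpreting the left side as a formal symbol and the right side as the chosen element of $R_\si$ multiplied by $a_\si \in R_{r(\si)}$). This is a graded additive bijection because $R_\si = R_{r(\si)} u_\si$ — which holds since $u_\si$ is object invertible, so right multiplication by $u_\si$ maps $R_{r(\si)}$ isomorphically onto $R_\si$ — and it is multiplicative precisely because the multiplication rule (\ref{productfirst}) was reverse-engineered from $a_\si u_\si \cdot a'_\ta u_\ta = a_\si (u_\si a'_\ta u_\si^{-1})(u_\si u_\ta u_{\si\ta}^{-1}) u_{\si\ta} = a_\si \alpha_\si(a'_\ta)\beta_{\si,\ta} u_{\si\ta}$ in $R$ (and both sides vanish off $\G_2$).

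The main obstacle is bookkeeping rather than conceptual: in the converse direction one must be careful that the grading $R = \oplus_{\si\in\G} R_\si$ with $R_\si \supseteq R_\si R_{\si^{-1}} R_\si$ genuinely forces $R_\si = R_{r(\si)} u_\si$ (this uses that $R$ is object unital and that $u_\si$ is object invertible, together with Proposition \ref{propstronglygraded}), and that the choices $u_\si$ are made consistently with $u_e = 1_{R_e}$ so that the normalization axiom (ii) holds on the nose. Everything else is a matter of transporting the associativity and unitality already available in $R$ through the conjugation formulas. For the forward direction, the only real content is confirming that axioms (iii) and (iv) imply associativity and that (i), (ii) imply the unit relations, both of which are direct (if slightly tedious) verifications that I would state but not belabor.
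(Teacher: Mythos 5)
Your proposal is correct and follows essentially the same route as the paper's proof: associativity from axioms (iii)--(iv), object unitality from (i)--(ii), exhibiting an explicit object-invertible element in each $R_\si$ for the forward direction, and in the converse defining $\alpha$ and $\beta$ by conjugation with chosen homogeneous object-invertible elements (normalized to $u_e = 1_{R_e}$) before checking the axioms and building the graded isomorphism. The only cosmetic difference is your choice of witness $1_{A_{r(\si)}}u_\si$ with object inverse $\beta_{\si^{-1},\si}^{-1}u_{\si^{-1}}$, where the paper uses $\beta_{\si,\si^{-1}}^{-1}u_\si$ with object inverse $1_{A_{d(\si)}}u_{\si^{-1}}$; both work, via the identity $\beta_{\si,\si^{-1}}=\alpha_\si(\beta_{\si^{-1},\si})$ coming from axioms (ii) and (iv).
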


\begin{proof}
Suppose that $( A , \G , \alpha , \beta )$ is an object crossed system and put $R = A \rtimes^{\alpha}_{\beta} \G$.
We show that $R$ is an object crossed product.
It is clear from Definition \ref{defcrossedsystem}(i)(ii) that each $R_e$, for $e \in \G_0$, is unital
with $1_{R_e} = 1_{A_e} u_e$ and that for all $\si \in \G$ and all $r \in R_{\si}$ 
the equalities $1_{R_{r(\si)}} r = r 1_{R_{d(\si)}} = r$ hold.
Next we show that $R$ is associative. To this end, take $(\si,\ta,\rho) \in \G_3$,
$a \in A_{r(\si)}$, $b \in A_{r(\ta)} = A_{d(\si)}$ and $c \in A_{r(\rho)} = A_{d(\ta)}$.
Then, from Definition \ref{defcrossedsystem}(iii,iv), it follows that
\[
\begin{array}{rcl}

a u_\si ( b u_\ta c u_{\rho} ) & = & a u_\si (b \alpha_\ta ( c ) \beta_{\ta,\rho} u_{\ta \rho} ) \\
                               & = & a \alpha_\si ( b \alpha_\ta ( c ) \beta_{\ta,\rho} ) \beta_{\si,\ta\rho} u_{\si\ta\rho} \\
                               & = & a \alpha_\si(b) \alpha_\si(\alpha_\ta(c)) \alpha_\si(\beta_{\ta,\rho}) \beta_{\si,\ta\rho} u_{\si\ta\rho} \\
                               & = & a \alpha_\si(b) \beta_{\si,\ta} \alpha_{\si\ta}(c) \beta_{\si,\ta}^{-1}
                               \beta_{\si,\ta} \beta_{\si\ta,\rho} u_{\si\ta\rho} \\
                               & = & a \alpha_\si (b) \beta_{\si,\ta} \alpha_{\si\ta}(c) \beta_{\si\ta,\rho} u_{\si\ta\rho} \\
                               & = & ( a \alpha_\si (b) \beta_{\si,\ta} u_{\si\ta} ) c u_{\rho} \\                           
                               & = & ( a u_\si b u_\ta ) c u_{\rho}.
\end{array}
\]
Now we show that $R$ is a crossed product. Take $\si \in \G$.
Put $r = \beta_{ \si,\si^{-1} }^{-1} u_\si$ and $s = 1_{d(\si)} u_{\si^{-1}}$. Then
\[
\begin{array}{rcl}
rs & = & ( \beta_{ \si,\si^{-1} }^{-1} u_\si ) ( 1_{d(\si)} u_{\si^{-1}} ) \\
   & = & \beta_{ \si,\si^{-1} }^{-1} \beta_{ \si,\si^{-1} } u_{r(\si)} \\
   & = & 1_{A_{r(\si)}} u_{r(\si)}
\end{array}
\]
and, from Definition \ref{defcrossedsystem}(iv), it follows that
\[
\begin{array}{rcl}
sr & = & ( 1_{d(\si)} u_{\si^{-1}} ) ( \beta_{ \si,\si^{-1} }^{-1} u_\si )  \\
   & = &  \alpha_{\si^{-1}}( \beta_{ \si,\si^{-1} }^{-1} ) \beta_{ \si^{-1},\si } u_{d(\si)} \\
   & = & \beta_{\si^{-1},\si}^{-1} \beta_{ \si^{-1},\si } u_{d(\si)} \\
   & = & 1_{A_{d(\si)}} u_{d(\si)}.
\end{array}
\]
Conversely, suppose that $R$ is an object unital ring graded by $\G$ which is a crossed product.
For all $e \in \G_0$ put $A_e = R_e$.
For all $\si \in \G$ choose an element $u_\si \in R_\si \cap U^{\rm gr}(R)$.
Since $R$ is object unital we may choose $u_e = 1_{R_e}$ for $e \in \G_0$.
Define maps $\alpha : \G \to \iso(A)$ and $\beta : \G_2 \to U^{\rm gr}(A_0)$ in the following way.
Take $(\si,\ta) \in \G_2$ and $a \in A_{d(\tau)}$. Let $v_{\ta^{-1}}$ denote the object inverse of $u_\ta$. Put
$\alpha_{\ta}(a) = u_{\ta} a v_{\ta^{-1}}$
and
$\beta_{\si,\ta} = u_\si u_\ta v_{(\si\ta)^{-1}}.$
These maps are well defined since ${\rm deg}(  u_{\ta} a v_{\ta^{-1}} ) = \ta \ta^{-1} = r(\ta)$,
${\rm deg}(  u_\si u_\ta v_{(\si\ta)^{-1}} ) = \si \ta (\si \ta)^{-1} = r(\si)$ and
$u_\si u_\ta v_{(\si\ta)^{-1}}$ is invertible in the ring $A_{r(\si)}$ with multiplicative inverse given by $u_{\si\ta} v_{\ta^{-1}} v_{\si^{-1}}$. 
Now we show conditions (i)-(iv) of Definition \ref{defcrossedsystem}.
Conditions (i) and (ii) hold since $u_e = v_e = 1_{R_e}$ for $e \in \G_0$.
Now we show condition (iii):
\[
\begin{array}{rcl}
\alpha_{\si} ( \alpha_{\ta} (a) ) &=& u_\si u_\ta a v_{\ta^{-1}} v_{\si^{-1}} \\
                                  &=& u_\si u_\ta v_{(\si\ta)^{-1}} u_{\si\ta} a v_{(\si\ta)^{-1}} u_{\si\ta} v_{\ta^{-1}} v_{\si^{-1}} \\
                                  &=& \beta_{\si,\ta} \alpha_{\si \ta} (a) \beta_{\si,\ta}^{-1}.
\end{array}
\]
Next we show condition (iv). To this end, suppose that $(\si,\ta,\rho) \in \G_3$. Then
\[
\begin{array}{rcl}
\beta_{\si,\ta} \beta_{\si\ta,\rho} &=& u_\si u_\ta v_{(\si\ta)^{-1}} u_{\si\ta} u_{\rho} v_{(\si\ta\rho)^{-1}} \\
                                    &=& u_{\si} u_\ta u_{\rho} v_{(\ta\rho)^{-1}} v_{\si^{-1}} u_\si u_{\ta\rho} v_{(\si\ta\rho)^{-1}}  \\
                                    &=& \alpha_{\si}( \beta_{\ta,\rho} ) \beta_{\si,\ta\rho}.
\end{array}
\]
Now we show that $R$ is isomorphic as a ring to $A \rtimes^{\alpha}_{\beta} \G$.
Take $r_\si \in R_\si$. Since $R$ is object unital we get that 
$r_\si = r_\si 1_{R_{d(\si)}} = r_\si v_{\si^{-1}} u_\si \in A_{r(\si)} u_\si$.
Suppose that $a u_\si = b u_\si$ for some $a,b \in A_{r(\si)}$.
Then $a = a 1_{R_{r(\si)}} = a u_\si v_{\si^{-1}} = b u_\si v_{\si^{-1}} = b 1_{R_{r(\si)}} = b$.
This means that $u_\si$ spans $R_\si$ freely as a left $A_{r(\si)}$-module.
Define $\gamma : R \to A \rtimes^{\alpha}_{\beta} \G$ by the additive extension 
of the relations $\gamma(r_\si) = ( r_\si v_{\si^{-1}} ) u_\si$, for $\si \in \G$ and $r \in R_\si$,
and define $\delta :  A \rtimes^{\alpha}_{\beta} \G \to R$ by the additive extension
of the relations $\delta( a_{r(\si)} u_\si ) = a_{r(\si)} u_\si$,for $\si \in \G$ and $a \in R_{r(\si)}$.
It is clear that $\gamma \circ \delta = {\rm id}_{A \rtimes^{\alpha}_{\beta} \G}$ and $\delta \circ \gamma = {\rm id}_R$.  
To finish the proof we need to show that $\gamma$ respects multiplication.
To this end, take $(\si,\ta) \in \G_2$, $a \in A_{r(\si)}$ and $b \in A_{r(\ta)}=A_{d(\si)}$. Then
\[
\begin{array}{rcl}
\gamma( a u_\si b u_\ta ) &=& \gamma( a u_\si b v_{\si^{-1}} u_\si u_\ta v_{(\si\ta)^{-1}}) u_{\si\ta} \\
                          &=& \gamma( a \alpha_\si( b ) \beta_{\si,\ta} u_{\si\ta} ) \\
                          &=& a \alpha_\si( b ) \beta_{\si,\ta} u_{\si\ta} \\
                          &=& ( a u_\si )( b u_\ta ) \\
                          &=& \gamma( a u_\si ) \gamma( b u_\ta ).
\end{array}
\]
\end{proof}

\begin{ex}\label{crossedproductseparable}
Let $L/K$ be a separable field extension of possibly infinite degree.
Let $\overline{K}$ denote a fixed algebraic closure of $K$ containing $L$. 
Choose a normal closure $N/K$ of $L/K$ in $\overline{K}$.
Let $G$ denote the Galois group of $N/K$ and let $\{ L_i \}_{i \in I}$ be
the different conjugate fields of $L$ under the action of $G$.
If $i,j \in I$, then put $G_{ij} = \{ g|_{L_j} \mid g \in G \ \mbox{and} \ g(L_j)=L_i \}$.
If we put $\G = \biguplus_{i,j \in I} G_{ij}$, then $\G$ is, in a natural way, a groupoid with respect to composition.
Let $\tilde{L} = ( L_i )_{i \in I}$ and define $\alpha : \G \to \iso( \tilde{L} )$ by $\alpha_{\si} = \si$ for $\si \in G_{ij}$. 
If $\beta : \G_2 \to U^{\rm gr}(\tilde{L}_0)$ is a map satisfying conditions (ii) and (iv) of Definition \ref{defcrossedsystem}
(condition (iii) is automatically satisfied), then we say that 
$\tilde{L} \rtimes^{\alpha}_{\beta} \G$ is {\it the object crossed product
defined by $L/K$ and $\beta$} and we let it be denoted by $(L/K,\beta)$. 
If $L/K$ is a finite Galois extension, then $(L/K,\beta)$ coincides with the classical 
construction of a crossed product relative the extension $L/K$ (see e.g. \cite[p. 242]{reiner1975} or Section \ref{section:introduction}
of the present article). 
\end{ex}

\begin{prop}
With the above notations, the following assertions hold. 
\begin{itemize}

\item[(a)] The $K$-algebra $(L/K,\beta)$ is simple.

\item[(b)] The $K$-algebra $(L/K,\beta)$ is unital if and only if $I$ is finite.
In that case, $Z((L/K,\beta))$ is a field which is isomorphic to $L^G$. 

\item[(c)] If $I$ is infinite, then $Z((L/K,\beta)) = \{ 0 \}$.

\end{itemize}
\end{prop}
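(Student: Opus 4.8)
The plan is to read all three assertions off the structure of $R = (L/K,\beta) = \tilde{L} \rtimes^{\alpha}_{\beta} \G$ as an object crossed product (Proposition \ref{propringtocrossed}), using two features special to this situation. First, every $L_{r(\si)}$ is a \emph{field}, so every nonzero homogeneous element of $R$ is object invertible; in particular $\beta_{\si,\si^{-1}}^{-1} u_\si \in U^{\rm gr}(R) \cap R_\si$ for every $\si$. Second, $\G$ is \emph{connected}: given $i,j \in I$ there is $g \in G$ with $g(L_i) = L_j$, hence $g|_{L_i} \in G_{ji} \neq \emptyset$. Moreover, for a fixed $i_0 \in I$ one has $\G(i_0) = G_{i_0 i_0} = {\rm Aut}_K(L_{i_0})$ (any $K$-automorphism of $L_{i_0}$ extends, by normality of $N/K$, to an element of $G$ stabilizing $L_{i_0}$), and any $K$-isomorphism $L_{i_0} \cong L$ then identifies $\G(i_0)$ with ${\rm Aut}_K(L)$ and $L_{i_0}^{\G(i_0)}$ with the fixed field $L^{{\rm Aut}_K(L)} =: L^G$. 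Throughout I write $e_i = {\rm id}_{L_i}$, so $\G_0 = I$ and $1_{R_i} = 1_{L_i} u_{e_i}$.

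Assertion (b) splits off quickly. If $I$ is finite then $\G = \G(\G_0)$, so by Proposition \ref{propunital} the ring $R = R_{\G(\G_0)}$ is unital with $1_R = \sum_{i \in I} 1_{L_i} u_{e_i}$. Conversely, since $R$ is object unital it has a complete set of orthogonal idempotents $\{ 1_{R_i} \}_{i \in I}$, nonzero by the standing assumption, with $R = \bigoplus_{i \in I} R\, 1_{R_i}$; if $R$ were unital then $1_R \in \bigoplus_{i \in F} R\, 1_{R_i}$ for some finite $F \subseteq I$, and for any $j \notin F$ we would get $1_{R_j} = 1_R 1_{R_j} \in \sum_{i \in F} R\, 1_{R_i} 1_{R_j} = \{ 0 \}$, a contradiction; hence $I$ is finite.

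For assertion (a), let $\mathfrak{a} \neq \{ 0 \}$ be a two-sided ideal. The key preliminary move is to clean up supports: if $0 \neq y \in \mathfrak{a}$ has a term $b u_{\si_0}$, then $1_{R_{r(\si_0)}}\, y\, 1_{R_{d(\si_0)}}$ is a nonzero element of $\mathfrak{a}$ supported in $G_{r(\si_0), d(\si_0)}$, and right-multiplying it by the object invertible $1_{L_{d(\si_0)}} u_{\si_0^{-1}}$ produces, without enlarging the support, a nonzero element supported in the principal group $\G(r(\si_0))$. So choose $x = \sum_{\si \in S} a_\si u_\si \in \mathfrak{a}\setminus\{0\}$ with $S \subseteq \G(e)$ for some $e \in \G_0$ and $|S|$ minimal among all such elements; left-multiplying by the object inverse of one of its terms (again support- and $\G(e)$-preserving) we may assume $e \in S$ and $a_e = 1_{L_e}$. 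For $b \in L_e$ the element $(b u_e) x - x (b u_e) = \sum_{\si \in S} a_\si\bigl( b - \alpha_\si(b)\bigr) u_\si$ (using $\alpha_e = {\rm id}$ and $\beta_{e,\si} = \beta_{\si,e} = 1_{L_e}$) lies in $\mathfrak{a}$ with support inside $S \setminus \{e\}$, so minimality forces it to vanish; as $a_\si \neq 0$ and $L_e$ is a field, $\alpha_\si = \si$ then fixes $L_e$ pointwise, i.e.\ $\si = e$. Hence $S = \{e\}$ and $1_{R_e} = x \in \mathfrak{a}$. Finally, for any $j \in I$ pick $\tau \in G_{je}$ (connectedness) and an object invertible $p \in R_\tau$ with object inverse $q$; then $1_{R_j} = p\, 1_{R_e}\, q \in \mathfrak{a}$, and since $r = \sum_\si 1_{R_{r(\si)}}(a_\si u_\si)$ for every $r = \sum_\si a_\si u_\si \in R$, we conclude $\mathfrak{a} = R$.

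The center is computed by the same kind of argument. Let $z = \sum_{\si \in S} a_\si u_\si \in Z(R)$. Comparing homogeneous components of $z\, 1_{R_i}$ and $1_{R_i}\, z$ over all $i \in \G_0$ forces $d(\si) = r(\si)$ for every $\si \in S$, and then comparing $(b u_e) z$ with $z (b u_e)$ over $b \in L_e$ forces, exactly as above, $a_\si = 0$ unless $\si \in \G_0$; thus $z = \sum_{i \in I} a_i u_{e_i} \in R_0$, and comparing $u_\si z$ with $z u_\si$ for $\si \in G_{jk}$ shows $z$ is central precisely when $\alpha_\si(a_{d(\si)}) = a_{r(\si)}$ for all $\si \in \G$. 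If $I$ is infinite, then for any $j$ with $a_j \neq 0$ there is (only finitely many $a_i$ are nonzero) some $k$ with $a_k = 0$, and taking $\tau \in G_{jk}$ gives $a_j = \alpha_\tau(a_k) = 0$; hence $Z(R) = \{0\}$, proving (c). If $I$ is finite, then $(a_i)_i \mapsto a_{i_0}$ is a ring homomorphism $Z(R) \to L_{i_0}$, injective by connectedness, whose image is exactly the set of elements of $L_{i_0}$ that propagate consistently to an equivariant family, namely $L_{i_0}^{\G(i_0)} \cong L^G$; being a subfield of $L_{i_0}$, this is a field, which proves (b). The main obstacle is assertion (a): the groupoid bookkeeping in the two support-cleaning reductions must be done with care (multiplication by the idempotents $1_{R_e}$ and by object invertible homogeneous elements has to be checked not to enlarge supports and to keep them inside a single principal group), and one must note that distinct elements of $\G(e)$ act as distinct automorphisms of $L_e$, so that $\alpha_\si = {\rm id}_{L_e}$ really does force $\si = e$.
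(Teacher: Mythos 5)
Your argument is correct, and for part (a) it is essentially the paper's argument: both proofs take a nonzero element of the ideal of minimal homogeneous length, use left/right multiplication by the idempotents $1_{R_e}$ and by object invertible homogeneous elements to normalize it, and then exploit that two distinct elements of $\G$ with the same domain and range are distinct field isomorphisms to produce a strictly shorter nonzero element, forcing length one; connectedness of $\G$ then spreads the resulting idempotent to all of $\G_0$. The cosmetic difference is that you push the element into a single principal group $\G(e)$ and shorten via the commutator with $b u_e$, whereas the paper only normalizes domain and range and shortens via $\si_1(a)x - x\,a u_{d(\si_1)}$; both hinge on the same fact that $\si\in G_{ee}$ is determined by its action on $L_e$. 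Where you genuinely diverge is in (b) and (c). For the center in (b) the paper simply defers to the argument of \cite{lundstrom2005}, and for (c) it invokes the general theorem that a simple non-unital algebra has zero center (citing \cite{schafer1966}); you instead compute $Z(R)$ directly and uniformly as the set of families $(a_i)_{i\in I}$ in $R_0$ satisfying $\alpha_\si(a_{d(\si)})=a_{r(\si)}$, from which both the identification with $L^{{\rm Aut}_K(L)}$ in the finite case and the vanishing in the infinite case (a nonzero entry would propagate along the connected groupoid to the cofinitely many zero entries) fall out at once. Your route is longer but self-contained and arguably more informative, since it exhibits the center explicitly rather than appealing to external results; the paper's route is shorter at the cost of two citations. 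Your treatment of the unitality criterion via the complete set of orthogonal idempotents $\{1_{R_i}\}_{i\in I}$ matches the paper's in substance.
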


\begin{proof}
(a) Take a non-zero ideal $J$ of $(L/K,\beta)$. 
We claim that there exists $j \in I$ such that $u_{{\rm id}_{L_j}} \in J$. 
If we assume that the claim holds, then it follows that $J = (L/K,\beta)$.
Indeed, since all the fields $\{ L_i \}_{i \in I}$ are conjugated under the action of $G$ it follows that $\G$ is connected.
Therefore, for any $i \in I$, there exists a field isomorphism $\sigma_i : L_j \to L_i$ (induced by the action of $G$).
Hence, 
$u_{{\rm id}_{L_i}} = \beta_{\sigma_i,\sigma_i^{-1}}^{-1} u_{\sigma_i} u_{{\rm id}_{L_j}} u_{\sigma_i^{-1}} \in J$.
Thus, all $u_{{\rm id}_{L_i}}$, for $i \in I$, belong to $J$ and therefore $J = (L/K,\beta)$.
Now we show the claim. Take a non-zero element $x = \sum_{i=1}^n a_i u_{\sigma_i}$ in $(L/K,\beta)$,
with all $a_i \in L_{r(\sigma_i)}$ non-zero, all $\sigma_i$, for $i=1,\ldots,n$, distinct,
such that $n$ is chosen as small as possible subject to the condition that $x \in J$. 
Seeking a contradiction, suppose that $n > 1$.
Since $u_{r(\si_1)} x u_{d(\si_1)}$ also is a non-zero element of $J$ it follows that 
we may assume that $d(\si_i)=d(\si_1)$ and $r(\si_i)=r(\si_1)$ for all $i = 1,\ldots,n$.
Since $\si_1 \neq \si_2$ we may choose $a \in L_{d(\si_1)}$ with $\si_1(a) \neq \si_2(a)$.  
Then $x - \sigma_1(a) x a u_{d(\si_1)}$ is a non-zero element of $J$ of shorter length,
which is a contradiction. Therefore, $n=1$ and we can write $x = a u_{\si}$ for some $\si \in \G$
and some non-zero $a \in A_{r(\si)}$. Then 
$1_{L_{r(\si)}} u_{r(\si)} = \beta_{\si,\si^{-1}}^{-1} a^{-1} x 1_{d(\si)} u_{\si^{-1}} \in J$
and we have shown the claim made in the beginning of the proof. 

(b) Suppose that $(L/K,\beta)$ is unital and let $y$ be a multiplicative identity of $(L/K,\beta)$.
Since $y u_{{\rm id}_{L_i}} = u_{{\rm id}_{L_i}} y = y$ for all $i \in I$ it is clear that $I$ must be finite.
From Proposition \ref{propunital} it follows that 
$\sum_{i \in I} 1_{L_i} u_{{\rm id}_{L_i}}$ is a multiplicative identity of $(L/K,\beta)$.
The statement concerning the center of $(L/K,\beta)$ follows from the the same argument used in
\cite[Section 4]{lundstrom2005}.

(c) This follows from the fact that any simple non-unital $K$-algebra has center equal to $\{ 0 \}$
(see e.g. \cite[Theorem 2.1]{schafer1966}).
\end{proof}

\begin{defi}
If $( A , \G , \alpha , \beta )$ is a crossed system with $\beta$ trivial, that is if
for all $(\si,\ta) \in \G$ the relation $\beta_{\si,\ta} = 1_{A_{r(\si)}}$ holds, 
then we say that the corresponding object crossed product $A \rtimes^{\alpha}_{\beta} \G$ is 
an {\it object skew groupoid ring} and we denote it by $A \rtimes^{\alpha} \G$.
\end{defi}

\begin{prop}\label{propskew}
$R$ is an object skew groupoid ring, if and only if,
the degree map $U^{\rm gr}(R) \to \G$ is surjective and splits as a homomorphism of groupoids.
\end{prop}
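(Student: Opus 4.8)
The plan is to characterize object skew groupoid rings via a splitting of the degree map, mirroring the classical group-graded fact that a crossed product is a skew group ring precisely when the natural map $U^{\mathrm{gr}}(R) \to G$ splits. I would first recall from Proposition~\ref{propringtocrossed} that any object crossed product $R$ arises as $A \rtimes^{\alpha}_{\beta} \G$ for a suitable object crossed system, with $A_e = R_e$ and a chosen family of object-invertible homogeneous elements $\{ u_\si \}_{\si \in \G}$, where the cocycle is recovered as $\beta_{\si,\ta} = u_\si u_\ta v_{(\si\ta)^{-1}}$. The key observation is that $\beta$ being trivial is equivalent to the assignment $\si \mapsto u_\si$ being a groupoid homomorphism: indeed $\beta_{\si,\ta} = 1_{R_{r(\si)}}$ for all $(\si,\ta) \in \G_2$ says exactly that $u_\si u_\ta = u_{\si\ta}$, since $u_{\si\ta}$ is the unique homogeneous element of degree $\si\ta$ whose object inverse is $v_{(\si\ta)^{-1}}$.

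For the forward direction, suppose $R$ is an object skew groupoid ring, so $R \cong A \rtimes^{\alpha} \G$ with trivial cocycle. Then the map $s : \G \to U^{\mathrm{gr}}(R)$ sending $\si \mapsto u_\si = 1_{A_{r(\si)}} u_\si$ (note each $u_\si$ is object invertible with object inverse $1_{A_{d(\si)}} u_{\si^{-1}}$, by the crossed-product computation in the proof of Proposition~\ref{propringtocrossed} specialized to $\beta$ trivial) is a groupoid homomorphism because $u_\si u_\ta = \alpha_\si(1)\cdot 1 \cdot u_{\si\ta} = u_{\si\ta}$ whenever $(\si,\ta) \in \G_2$; moreover $s$ respects domains and ranges since $u_e = 1_{R_e}$ for $e \in \G_0$, and $\mathrm{deg} \circ s = \mathrm{id}_\G$. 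Surjectivity of the degree map is automatic from Definition~\ref{defcrossedproduct} (object crossed products are in particular strongly graded with nonempty $U^{\mathrm{gr}}(R) \cap R_\si$), but I should note it also follows from the existence of the section $s$.

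For the converse, assume the degree map $\mathrm{deg} : U^{\mathrm{gr}}(R) \to \G$ is surjective (so $R$ is an object crossed product) and admits a groupoid-homomorphism section $s : \G \to U^{\mathrm{gr}}(R)$. Set $u_\si := s(\si)$. Since $s$ is a groupoid homomorphism with $\mathrm{deg} \circ s = \mathrm{id}_\G$, it sends units to units: $s(e)$ is an idempotent object-invertible element of degree $e$, hence $s(e) = 1_{R_e}$ for each $e \in \G_0$. Thus the family $\{ u_\si \}$ satisfies the normalization required to run the construction in the proof of Proposition~\ref{propringtocrossed}, and the resulting cocycle is $\beta_{\si,\ta} = u_\si u_\ta v_{(\si\ta)^{-1}} = u_{\si\ta} v_{(\si\ta)^{-1}} = 1_{R_{r(\si)}}$ for all $(\si,\ta) \in \G_2$, using that $s$ is multiplicative and that the object inverse of $u_{\si\ta}$ is exactly $v_{(\si\ta)^{-1}}$. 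Hence the presentation $R \cong A \rtimes^{\alpha}_{\beta} \G$ from that proposition has trivial $\beta$, i.e. $R$ is an object skew groupoid ring.

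The only genuinely delicate point is making sure the chosen section lands on a \emph{normalized} family $\{u_\si\}$ with $u_e = 1_{R_e}$, so that Proposition~\ref{propringtocrossed}'s construction applies verbatim and the weak action $\alpha_\si(a) = u_\si a v_{\si^{-1}}$ is a genuine (identity-respecting) ring isomorphism $R_{d(\si)} \to R_{r(\si)}$; this is handled by the remark above that a groupoid homomorphism must send the units $e \in \G_0$ to idempotent object-invertible elements of degree $e$, of which $1_{R_e}$ is the only one. Everything else is a direct unwinding of the definitions.
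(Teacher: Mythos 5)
Your proof is correct and follows essentially the same route as the paper: in both directions you use the family $\{u_\si\}$ from Proposition~\ref{propringtocrossed} as the section (resp.\ read the section off as the $u_\si$'s), observe that a groupoid-homomorphism section must send each $e \in \G_0$ to $1_{R_e}$, and compute $\beta_{\si,\ta} = u_\si u_\ta v_{(\si\ta)^{-1}} = 1_{R_{r(\si)}}$ exactly as in the paper. Your extra remarks on normalization and surjectivity are fine but not a different argument.
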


\begin{proof}
First we show the ``only if'' statement. Suppose that $R$ is the object skew groupoid ring $A \rtimes^{\alpha} \G$.
Define a homomorphism of groupoids $\varphi : \G \to U^{\rm gr}(R)$ by saying that
$\varphi( \si ) = u_\si$, for $\si \in \G$. If $(\si,\ta) \in \G_2$, then
\[
\varphi( \si \ta ) = u_{\si \ta} = u_\si u_\ta = \varphi( \si ) \varphi( \ta ).
\]
Clearly, ${\rm deg} \circ \varphi = {\rm id}_{\G}$.
Now we show the ``if'' statement. Suppose that we are given a homomorphism of groupoids $\varphi : \G \to U^{\rm gr}(R)$
such that ${\rm deg} \circ \varphi = {\rm id}_{\G}$. For all $\si \in \G$ let $u_\si$ denote the object 
invertible element $\varphi(\si)$. Since groupoid homomorphisms preserve domain and range it is clear
that $\varphi(e) = 1_{R_e}$ for $e \in \G_0$. Define maps $\alpha : \G \to \iso(A)$ and $\beta : \G_2 \to U^{\rm gr}(A_0)$
using the elements $\{ u_\si \}_{\si \in \G}$ as in the proof of Proposition \ref{propringtocrossed}. 
Take $(\si,\ta) \in \G_2$. Let $v_{\ta^{-1}}$ denote the object inverse of $u_\ta$. Then
\[
\beta_{\si,\ta} = u_\si u_\ta v_{(\si\ta)^{-1}} 
 =  \varphi(\si) \varphi(\ta) \varphi( (\si \ta)^{-1} ) 
 =  \varphi( r( \si \ta ) ) 
 =  \varphi( r( \si ) )
 =  1_{R_{r(\si)}}.
\]
\end{proof}

\begin{rem}
Various properties of object skew groupoid rings have been studied in 
\cite{lundstrom2012} and \cite{nystedt2013} in the context of partially defined
groupoid dynamical systems on topological spaces.
\end{rem}

\begin{defi}
If $( A , \G , \alpha , \beta )$ is a crossed system with $\alpha$ trivial, that is if
all the rings $A_e$, for $e \in \G_0$, coincide with the same ring $B$, and for all $\si \in \G$,
the map $\alpha_\si : B \to B$ is the identity, then we say that the corresponding 
object crossed product $A \rtimes^{\alpha}_{\beta} \G$ is  
an {\it object twisted groupoid ring} and we denote it by $B \rtimes_{\beta} \G$.
In that case, $\beta_{\si,\ta} \in U(Z(B))$, for $(\si,\ta) \in \G_2$
(this follows from Definition \ref{defcrossedsystem}(iii)).
\end{defi}

\begin{prop}\label{proptwisted}
$R$ is an object twisted groupoid ring, if and only if, the following conditions hold:
\begin{itemize}

\item[(i)] all the rings $R_e$, for $e \in \G_0$, are copies of the same ring $B$
(the copy of an element $b \in B$ in $R_e$ is denoted by $b^e$);

\item[(ii)] for all $\si \in \G$ there is an object invertible element $u_\si \in R_\si$ with the property that
for all $b \in B$ the equality $b^{r(\si)} u_\si = u_\si b^{d(\si)}$ holds. 

\end{itemize}
\end{prop}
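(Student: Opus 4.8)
The plan is to prove Proposition \ref{proptwisted} by mirroring the structure of the proof of Proposition \ref{propskew}, carrying the ``trivial $\beta$'' argument over to the ``trivial $\alpha$'' case. For the ``only if'' direction, suppose $R = B \rtimes_{\beta} \G$ is an object twisted groupoid ring. Then each $R_e = B u_e$ is visibly a copy of $B$ (with $b^e := b u_e$), which gives (i). For (ii), the element $u_\si$ furnished by the crossed product construction is object invertible, and since $\alpha$ is trivial the multiplication rule (\ref{productfirst}) gives $b^{r(\si)} u_\si = (b u_{r(\si)})(1_B u_\si) = b \, \alpha_{r(\si)}(1_B)\beta_{r(\si),\si} u_\si = b u_\si$, while $u_\si b^{d(\si)} = (1_B u_\si)(b u_{d(\si)}) = \alpha_\si(b) \beta_{\si, d(\si)} u_{\si\ta}$ wait, $u_{\si d(\si)} = u_\si$, and this equals $\alpha_\si(b) u_\si = b u_\si$; hence $b^{r(\si)} u_\si = u_\si b^{d(\si)}$, establishing (ii).

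For the ``if'' direction, assume (i) and (ii). Set $A_e = R_e$ for each $e \in \G_0$ and, using the object invertible elements $\{ u_\si \}_{\si \in \G}$ from (ii) (choosing $u_e = 1_{R_e}$ for $e \in \G_0$, which is legitimate since the identity is its own object inverse and clearly commutes with every $b^e$), define $\alpha$ and $\beta$ exactly as in the proof of Proposition \ref{propringtocrossed}: $\alpha_\ta(a) = u_\ta a v_{\ta^{-1}}$ and $\beta_{\si,\ta} = u_\si u_\ta v_{(\si\ta)^{-1}}$, where $v_{\ta^{-1}}$ is the object inverse of $u_\ta$. By Proposition \ref{propringtocrossed} this exhibits $R$ as the object crossed product $A \rtimes^{\alpha}_{\beta} \G$. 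It then remains to check that $\alpha$ is trivial in the sense of the definition. First, condition (i) forces each $A_e = R_e$ to be (a copy of) the same ring $B$, so the underlying rings coincide. Second, for $a = b^{d(\ta)} \in A_{d(\ta)}$, property (ii) gives $u_\ta b^{d(\ta)} = b^{r(\ta)} u_\ta$, so $\alpha_\ta(b^{d(\ta)}) = u_\ta b^{d(\ta)} v_{\ta^{-1}} = b^{r(\ta)} u_\ta v_{\ta^{-1}} = b^{r(\ta)} 1_{R_{r(\ta)}} = b^{r(\ta)}$; under the identifications $A_{d(\ta)} \cong B \cong A_{r(\ta)}$ this says $\alpha_\ta$ is the identity map on $B$, as required.

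The one genuinely delicate point — and the step I expect to be the main obstacle — is making precise the phrase ``all the rings $R_e$ are copies of the same ring $B$'' and ensuring the identifications are coherent across all $e$ simultaneously, so that condition (ii) can even be stated and used. Concretely, (i) should be read as the data of ring isomorphisms $\iota_e : B \to R_e$ with $b^e := \iota_e(b)$, and then (ii) asserts $\iota_{r(\si)}(b)\, u_\si = u_\si\, \iota_{d(\si)}(b)$ for all $b \in B$. With that reading the computation above goes through verbatim, and conversely the crossed product $B \rtimes_\beta \G$ comes equipped with the obvious isomorphisms $\iota_e : B \to B u_e$, $b \mapsto b u_e$. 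I would add a sentence at the start of the proof fixing this interpretation, after which both directions are the short verifications sketched above; no step requires more than applying (\ref{productfirst}), the object unitality relations, and the identities already established in the proof of Proposition \ref{propringtocrossed}.
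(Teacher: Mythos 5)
Your proposal is correct and follows essentially the same route as the paper: the ``only if'' direction is the direct computation the paper dismisses as clear, and the ``if'' direction defines $\alpha$ and $\beta$ from the elements $u_\si$ exactly as in the proof of Proposition \ref{propringtocrossed} and uses condition (ii) to see that $\alpha_\si(b^{d(\si)}) = b^{r(\si)} u_\si v_{\si^{-1}} = b^{r(\si)}$, i.e.\ that $\alpha$ is trivial under the identifications. Your extra remark about fixing coherent isomorphisms $\iota_e : B \to R_e$ is a reasonable clarification but does not change the argument.
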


\begin{proof}
The ``only if'' statement is clear. Now we show the ``if'' statement. 
Take $\si \in \G$ and  an object invertible element $u_\si \in R_\si$ satisfying (ii). 
Let $v_{\si^{-1}}$ denote the object inverse of $u_\si$. Take $b^{ d(\si) } \in R_{d(\si)}$, for some $b \in B$.
Using the notation from Proposition \ref{propringtocrossed} we get that 
$\alpha_{\si}( b^{ d(\si) } ) = u_{\si} b^{ d(\si) } v_{\si^{-1}} = b^{r(\si)} u_\si v_{\si^{-1}} = b^{r(\si)}$ and
$\beta_{\si,\ta} = u_\si u_\ta v_{(\si\ta)^{-1}} \in U( R_{r(\si)} )$.
\end{proof}

\begin{rem}
If $( A , \G , \alpha , \beta )$ is a crossed system with $\alpha$ and $\beta$ trivial, that is if
all $(\si,\ta) \in \G$ the relation $\beta_{\si,\ta} = 1_{R_{r(\si)}}$ holds, and
all the rings $A_e$, for $e \in \G_0$, coincide with the same ring $B$, and for all $\si \in \G$,
the map $\alpha_\si : B \to B$ is the identity, then the corresponding 
object crossed product $A \rtimes^{\alpha}_{\beta} \G$ coincides with the 
{\it groupoid ring} of $\G$ over $B$ (see e.g. \cite[Example 2.1.2]{lundstrom2004}) and it is denoted by $B [ \G ]$.
\end{rem}

\begin{prop}
$R$ is a groupoid ring, if and only if, the following hold:
\begin{itemize}

\item[(i)] the degree map $U^{\rm gr}(R) \to \G$ is surjective and splits as a homomorphism of groupoids;

\item[(ii)] all the rings $R_e$, for $e \in \G_0$, are copies of the same ring $B$
(the copy of an element $b \in B$ in $R_e$ is denoted by $b^e$);

\item[(iii)] for all $\si \in \G$ there is an object invertible element $u_\si \in R_\si$ with the property that
for all $b \in B$ the equality $b^{r(\si)} u_\si = u_\si b^{d(\si)}$ holds. 

\end{itemize}
\end{prop}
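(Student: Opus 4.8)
The underlying idea is that a groupoid ring $B[\G]$ is exactly an object crossed product whose associated weak action \emph{and} $\alpha$-cocycle are both trivial, so the statement should follow by merging the reasoning behind Proposition~\ref{propskew} (triviality of $\beta$) and Proposition~\ref{proptwisted} (triviality of $\alpha$). For the ``only if'' direction, suppose $R=B[\G]$ and let $\{u_\si\}_{\si\in\G}$ be its standard homogeneous basis, so that $u_\si u_\ta=u_{\si\ta}$ for $(\si,\ta)\in\G_2$ and $u_e=1_{R_e}$ for $e\in\G_0$. Each $u_\si$ is object invertible with object inverse $u_{\si^{-1}}$, and $\si\mapsto u_\si$ is a groupoid homomorphism splitting ${\rm deg}$, which gives (i); each $R_e=B^e u_e$ is a copy of $B$, which gives (ii); and (\ref{productfirst}) yields $b^{r(\si)}u_\si=b u_\si=u_\si b^{d(\si)}$ for all $b\in B$, which gives (iii).

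For the ``if'' direction I would first use the surjectivity of ${\rm deg}:U^{\rm gr}(R)\to\G$ contained in (i) to note that $R$ is an object crossed product, so by the converse part of Proposition~\ref{propringtocrossed} it is isomorphic to $A\rtimes^{\alpha}_{\beta}\G$ with $A_e=R_e$ and with $\alpha,\beta$ built from any chosen homogeneous object invertible generators $u_\si\in R_\si$ (with $u_e=1_{R_e}$). The plan is then to choose these generators so that, simultaneously, (a) $\si\mapsto u_\si$ is a groupoid homomorphism — which is what the splitting in (i) provides — and (b) $b^{r(\si)}u_\si=u_\si b^{d(\si)}$ for all $b\in B$ — which is what (iii) provides. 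Granting such a family, the computation in the proof of Proposition~\ref{propskew} gives $\beta_{\si,\ta}=u_\si u_\ta v_{(\si\ta)^{-1}}=u_{\si\ta}v_{(\si\ta)^{-1}}=1_{R_{r(\si)}}$, so $\beta$ is trivial, while the computation in the proof of Proposition~\ref{proptwisted} gives $\alpha_\si(b^{d(\si)})=u_\si b^{d(\si)}v_{\si^{-1}}=b^{r(\si)}u_\si v_{\si^{-1}}=b^{r(\si)}$, so $\alpha$ is trivial. By (ii) all the rings $A_e=R_e$ are identified with the single ring $B$, so $R\cong B\rtimes^{\alpha}_{\beta}\G$ with $\alpha$ and $\beta$ trivial, i.e.\ $R=B[\G]$.

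The hard part will be the reconciliation step in the ``if'' direction: condition (i) produces a homomorphism $\varphi$ and hence generators $w_\si=\varphi(\si)$ that are compatible with the multiplication (trivial $\beta$) but need not intertwine with $B$, whereas condition (iii) only guarantees the existence of (possibly unrelated) generators that intertwine with $B$ (trivial $\alpha$). One must show that a common family of generators can be chosen — equivalently, that ${\rm deg}$ already splits through the set of homogeneous object invertible elements $u_\si\in R_\si$ satisfying $b^{r(\si)}u_\si=u_\si b^{d(\si)}$ for all $b\in B$. This is where the genuine content of the proposition lies; once such a family is in hand, the two displayed computations above are routine and conclude the argument.
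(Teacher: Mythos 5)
Your ``only if'' direction is complete and correct, and your overall strategy --- merge Propositions \ref{propskew} and \ref{proptwisted} --- is exactly the paper's: its entire proof reads ``This follows from Proposition \ref{propskew} and Proposition \ref{proptwisted}.'' The problem is the step you explicitly defer. You write that one ``must show that a common family of generators can be chosen'' and that this ``is where the genuine content of the proposition lies,'' but you never produce such a family, so as written the ``if'' direction is not proved. Concretely: condition (i) gives multiplicative generators $w_\si=\varphi(\si)$, condition (iii) gives intertwining generators $u_\si$, and $w_\si=c_\si u_\si$ for some unit $c_\si\in U(R_{r(\si)})$; the $w_\si$ intertwine with $B$ only if each $c_\si$ is central, and the $u_\si$ can be rescaled into a multiplicative family only if the cocycle $\beta_{\si,\ta}=u_\si u_\ta v_{(\si\ta)^{-1}}$, which takes values in $U(Z(B))$, is a coboundary of a cochain with values in $U(Z(B))$. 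Neither is automatic.

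In fact the deferred step cannot be carried out in general, so the gap is not merely technical. Take $\G=\mathbb{Z}/2=\{e,g\}$, $B=M_2(\mathbb{R})$, and let $R=B\rtimes_{\beta}\G$ be the object twisted groupoid ring with $\beta_{g,g}=-1_B$. Conditions (ii) and (iii) hold with the canonical $u_g$, which commutes with $B$ and satisfies $u_g^2=-1_R$. Condition (i) also holds: putting $c=\left(\begin{smallmatrix}0&1\\-1&0\end{smallmatrix}\right)$ one has $c^2=-1_B$, so $w_g=c\,u_g$ satisfies $w_g^2=c^2u_g^2=1_R$, and $e\mapsto 1_R$, $g\mapsto w_g$ splits the degree map. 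Yet $R$ is not a groupoid ring: every object invertible $t_g\in R_g$ commuting with $R_e=B$ has the form $\lambda u_g$ with $\lambda\in\mathbb{R}^{\times}$, whence $t_g^2=-\lambda^2 1_R\neq 1_R$; equivalently, $R\cong M_2(\mathbb{C})$ is simple while $B[\G]\cong M_2(\mathbb{R})\times M_2(\mathbb{R})$ is not. So the statement needs a stronger hypothesis --- for instance that a single family $\{u_\si\}$ witnesses (i) and (iii) simultaneously, i.e.\ that the degree map splits through intertwining elements --- and under that hypothesis your two displayed computations do finish the argument. The paper's one-line proof makes exactly the unjustified leap you flagged, so your honesty about the gap is the most valuable part of your write-up; it just still needs to be resolved, and as stated it resolves negatively.
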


\begin{proof}
This follows from Proposition \ref{propskew} and Proposition \ref{proptwisted}. 
\end{proof}

\section{Separability}\label{section:separability}

In this section, we first state the definition of separable functors and 
we obtain a criteria for separability for extensions of rings 
with enough idempotents (see Proposition \ref{propsepidempotent}).
After that, we use this result (and a series of propositions) to obtain a non-unital 
separability result (see Proposition \ref{separableprop}) for object unital groupoid graded rings.
We specialize this result to object crossed products, and then, in turn,
to object skew group rings, object twisted group rings and groupoid rings, and, in particular, infinite matrix rings
(see Proposition \ref{separablecrossed}, Proposition \ref{separabletwisted} and Proposition \ref{separablematrices}). 

\subsection*{Separable functors}

Let $C$ and $D$ be categories.
We let ${\rm Ob}(C)$ denote the collection of objects of $C$.
If $M,N \in {\rm Ob}(C)$, then we let ${\rm Hom}_C(M,N)$ denote the set of morphisms in $C$ from $M$ to $N$.
Following \cite{nastasescu1989}, we say that a covariant functor $F : C \to D$ is {\it separable}
if for all $M,N \in {\rm Ob}(C)$, there is a map 
$\varphi_{M,N}^F : {\rm Hom}_D(F(M),F(N)) \to {\rm Hom}_C(M,N)$
satisfying
\begin{itemize}

\item[(SF1)] $\varphi_{M,M'}^F( F(\alpha) ) = \alpha$, and

\item[(SF2)] $F(\beta) f = g F(\alpha)$ $\Rightarrow$ 
$\beta \varphi_{M,N}^F(f) = \varphi_{M',N'}^F(g) \alpha$,

\end{itemize}
for all $M,N,M',N' \in {\rm ob}(C)$,
all $\alpha \in {\rm Hom}_C(M,M')$, all $\beta \in {\rm Hom}_C(N,N')$,
all $f \in {\rm Hom}_D(F(M),F(N))$ and all $g \in {\rm Hom}_D(F(N'),F(M'))$.

\subsection*{Rings with enough idempotents}
Let $A/B$ be a ring extension and let $\varphi : B \to A$ denote the inclusion map.
Recall from Section \ref{section:gradedrings} that we then can define the associated 
functors $\varphi_* : A\mbox{-Mod} \to B\mbox{-Mod}$ (restriction) and
$\varphi^* :  B\mbox{-Mod} \to A\mbox{-Mod}$ (induction).
Furthermore, we let $\mu : A \otimes_B A \to A$ denote multiplication map 
defined by the relations $\mu(a \otimes a') = aa'$, for $a,a' \in A$.
Following \cite[p. 75]{brzezinski2005} we say that $A/B$ is separable
if $\mu$ has an $A$-bimodule section $\delta : A \to A \otimes_B A$.
In \cite[Proposition 1.3]{nastasescu1989} (see also \cite[Proposition 2.11]{kadison1999}) 
it was shown that if $A$ and $B$ are unital with $1_A = 1_B$, then $\varphi_*$ (or $\varphi^*$) is separable
if and only if $A/B$ is separable (or $\varphi$ splits as a $B$-bimodule map).
We now generalize these results to certain rings with enough idempotents. 

\begin{prop}\label{propsepidempotent}
If $A/B$ is an extension of rings with enough idempotents such that
$\{ e_i \}_{i \in I} \subseteq B$ is a complete set of idempotents for both $A$ and $B$, 
then the following are equivalent:
\begin{itemize}

\item[(i)] the functor $\varphi_*$ is separable;

\item[(ii)] the ring extension $A/B$ is separable:

\item[(iii)] for all $i \in I$ there exists an element $x_i \in \sum_{j \in I} e_i A e_j \otimes_B e_j A e_i$ such that 
for all $j \in I$ and all $a \in e_i A e_j$, the equalities $\mu(x_i)=e_i$ and $x_i a = a x_j$ hold.

\end{itemize}
\end{prop}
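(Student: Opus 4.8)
The plan is to prove the chain of implications $(ii) \Rightarrow (i) \Rightarrow (iii) \Rightarrow (ii)$, adapting the classical unital argument of N\u{a}st\u{a}sescu et al.\ to the setting of rings with enough idempotents sharing a common complete set of orthogonal idempotents $\{e_i\}_{i \in I}$. The key conceptual point is that $A = \bigoplus_{i,j} e_i A e_j$ and $B = \bigoplus_{i,j} e_i B e_j$, and that any $A$-bimodule map out of $A$ is determined by its values on the "corner identities" $e_i$, since $a = e_i a e_j$ for $a \in e_i A e_j$. This is what lets us repackage the single bimodule section $\delta : A \to A \otimes_B A$ as the family $\{x_i\}_{i \in I}$ with $x_i = \delta(e_i) \in e_i(A \otimes_B A)e_i = \bigoplus_{j} e_i A e_j \otimes_B e_j A e_i$.

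For $(ii) \Rightarrow (i)$: given an $A$-bimodule section $\delta$ of $\mu$, I would define, for left $A$-modules $M,N$ and $f \in \mathrm{Hom}_B(M,N)$, the map $\varphi^{\varphi_*}_{M,N}(f)$ by $m \mapsto \sum \delta(e)^{(1)} f(\delta(e)^{(2)} m)$ where $e$ is an idempotent in $\{e_i\}$ with $em = m$ (Sweedler-type notation; one checks independence of the choice of $e$ using $\mu\delta = \mathrm{id}$ and the bimodule property). Verifying $A$-linearity of the result uses that $\delta$ is a left $A$-module map, and (SF1), (SF2) follow formally exactly as in the classical case. For $(i) \Rightarrow (iii)$: specialize the separability datum to $M = N = A$ (a left $A$-module via multiplication), noting $\varphi_*(A) = A$ as a $B$-module; apply $\varphi^{\varphi_*}_{A,A}$ to the right-multiplication maps $\rho_a : A \to A$, $x \mapsto xa$, which are $B$-linear, and extract the $x_i$ from the images of the $e_i$. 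The relations $\mu(x_i) = e_i$ and $x_i a = a x_j$ for $a \in e_i A e_j$ should fall out of (SF1) and (SF2) respectively, after translating $A$-bimodule language into the enough-idempotents bookkeeping. For $(iii) \Rightarrow (ii)$: define $\delta : A \to A \otimes_B A$ by $\delta(a) = x_i a$ for $a \in e_i A e_j = e_i A$ (extended additively over the decomposition $A = \bigoplus_i e_i A$); then $\mu(\delta(a)) = \mu(x_i)a = e_i a = a$, right $A$-linearity is immediate, and left $A$-linearity reduces to checking $\delta(ba) = b\,\delta(a)$ for $b \in e_k A e_i$, which follows from $x_k b = b x_i$ (an instance of the hypothesis applied to $b \in e_k A e_i$).

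The main obstacle I anticipate is bookkeeping rather than conceptual: making sure all the "local" choices of idempotents are consistent and that the tensor products over the non-unital ring $B$ behave well — in particular that $e_j A \otimes_B A = e_j(A \otimes_B A)$ and that $\bigoplus_j e_i A e_j \otimes_B e_j A e_i$ is really the correct description of $e_i(A \otimes_B A)e_i$, which uses that $\{e_j\}$ is a complete set of idempotents for $A$ on both sides so that $A \otimes_B A = \bigoplus_{j} A e_j \otimes_B e_j A$. One must also be careful that the $B$-bimodule maps and $A$-bimodule maps in question are genuinely additive over possibly infinite direct sums, but since every element of $A$ lies in a finite sub-sum $\bigoplus_{i \in F} e_i A$ this causes no trouble. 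I would also double-check the subtle point in $(i)\Rightarrow(iii)$ that the well-definedness clause of (iii) — the element $x_i$ works uniformly for all $j$ and all $a \in e_i A e_j$ — is exactly encoded by (SF2) applied with $\alpha = \rho_a$, $\beta = \mathrm{id}$ (or a suitable variant), so that no extra argument is needed beyond the abstract separable-functor axioms.
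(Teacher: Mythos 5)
Your cycle $(ii)\Rightarrow(i)\Rightarrow(iii)\Rightarrow(ii)$ is a legitimate reordering, and two of the three legs are sound: your $(iii)\Rightarrow(ii)$ is exactly right (and is essentially the paper's $(iii)\Rightarrow(i)$ repackaged through the bimodule section), and your $(ii)\Rightarrow(i)$ works once you replace ``an idempotent $e$ with $em=m$'' by the sum over the finite support of $m$ in $M=\oplus_i e_iM$, using $\delta(e_i)=\delta(e_i)e_i$ to see the formula is independent of which finite set of indices you take. Your description of $e_i(A\otimes_B A)e_i$ is also correct.

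The genuine gap is in $(i)\Rightarrow(iii)$. You propose to take $M=N=A$ and apply the retraction $\varphi^{\varphi_*}_{A,A}$ to the right-multiplication maps $\rho_a$. But $\rho_a$ is already left $A$-linear, so (SF1) forces $\varphi^{\varphi_*}_{A,A}(\rho_a)=\rho_a$ and you learn nothing; more importantly, no amount of manipulating maps $A\to A$ produces an element of $A\otimes_B A$, so the $x_i$ are never constructed. The essential trick, which your sketch omits, is to work with the target module $A\otimes_B A$: the map $\delta'\colon A\to A\otimes_B A$, $\delta'(a)=\sum_i e_i\otimes a$ (a finite sum since $e_i\otimes a=e_i\otimes e_ia$), is left $B$-linear and right $A$-linear with $\mu\circ\delta'=\mathrm{id}_A$ but is \emph{not} left $A$-linear; applying the retraction $\varphi^{\varphi_*}_{A,\,A\otimes_B A}$ to $\delta'$ yields a left $A$-linear $\delta$, (SF2) applied to $\mu$ gives $\mu\circ\delta=\mathrm{id}_A$, and (SF2) applied to the right-multiplication operators on $A$ and on $A\otimes_B A$ (this is where your $\rho_a$ idea actually belongs) gives right $A$-linearity of $\delta$; only then does $x_i:=\delta(e_i)$ exist and satisfy (iii). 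This is precisely the content of the paper's $(i)\Rightarrow(ii)$ step, and without it your proof of $(i)\Rightarrow(iii)$ does not get off the ground.
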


\begin{proof}
We adapt the first half of the proof of \cite[Proposition 1.3]{nastasescu1989} to our situation.

(i)$\Rightarrow$(ii): Suppose that $\varphi_*$ is separable. 
To $\mu : A \otimes_B A \to A$ we associate $\delta' : A \to A \otimes_B A$ defined by
$\delta'(a) = \sum_{i \in I} e_i \otimes a$ for $a \in A$.
Note that since $e_i \otimes a = e_i^2 \otimes a = e_i \otimes e_i a$ 
the sum defining $\delta'(a)$ is finite and hence well defined.
The map $\delta'$ is clearly right $A$-linear. Now we show that $\delta'$ is left $B$-linear.
To this end, take $a \in A$ and $b \in B$. Then 
$\delta'(ba) = 
\sum_{i \in I} e_i \otimes ba = 
\sum_{i \in I} e_i b \otimes a =
\left( \sum_{i \in I} e_i b \right) \otimes a =
b \otimes a =
\left( \sum_{i \in I} b e_i \right) \otimes a = 
b \sum_{i \in I} e_i \otimes a =
b \delta'(a)$.
If we put $\delta = \varphi_{A , A \otimes_B A} ( \delta' ) $, then, since $\mu \circ \delta' = {\rm id}_A$,
we get from separability of $\varphi_*$ that $\mu \circ \delta = {\rm id}_A$.
From the definition of the functor $\varphi_*$ it follows that $\delta$ is left $A$-linear.
We show that $\delta$ is also right $A$-linear.
In fact, suppose that $a \in A$ and let $\alpha_a : A \to A$ and
$\beta_a : A \otimes_B A \rightarrow A \otimes_B A$ denote 
the left $R$-linear maps which multiplies by $a$ from the right.
Since $\delta'$ is right $S$-linear it follows that $\delta' \circ \alpha_a = \beta_a \circ \delta'$.
Thus, from (SF2), it follows that $\delta \circ \alpha_a = \beta_a \circ \delta$,
since $\varphi_*( \alpha_a ) = \alpha_a$ and $\varphi_*( \beta_a ) = \beta_a$.
Therefore, $\delta$ is right $A$-linear.

(ii)$\Rightarrow$(iii):
Suppose that $\mu$ has an $A$-bimodule section $\delta : A \to A \otimes_B A$.
For all $i \in I$ put $x_i = \delta(e_i)$. Then $\mu(x_i) = (\mu \circ \delta)(e_i) = e_i$.
If $i,j \in I$ and $a \in e_i A e_j$, then
$x_i a = \delta(e_i) a 
      = \delta(e_i a) 
      = \delta(e_i a e_j) 
      = \delta(a e_j) 
      = a \delta(e_j) 
      = a x_j.$

(iii)$\Rightarrow$(i):
Suppose that for all $i \in I$ there exists an element
$x_i \in A \otimes_B A$ satisfying the conditions in (iii) above.
For all $i \in I$ we put
\[
x_i = \sum_{j,k \in I} y_{ij}^{(k)} \otimes z_{ji}^{(k)}
\]
for some $y_{ij}^{(k)} \in e_i A e_j$ and $z_{ji}^{(k)} \in e_j A e_i$ such that 
for all but finitely many $k \in I$, the relations $y_{ij}^{(k)} = z_{ji}^{(k)} = 0$ hold.
Suppose that $M$ and $N$ are left $A$-modules.
We define 
$
\varphi_{M,N}^{\varphi_*} : {\rm Hom}_B(\varphi_*(M),\varphi_*(N)) \to {\rm Hom}_A(M,N) 
$
in the following way. Take $f \in {\rm Hom}_B( \varphi_*(M) , \varphi_*(N) )$
and put $\varphi_{M,N}^{\varphi_*}(f) = \tilde{f}$ where 
\[
\tilde{f}(m) = \sum_{i,j,k \in I} y_{ij}^{(k)} f ( z_{ji}^{(k)} m )
\]
for $m \in M$. If $f \in {\rm Hom}_A( M , N )$ and $m \in M$, then 
\[
\tilde{f}(m) = \sum_{i,j,k \in I} y_{ij}^{(k)} f ( z_{ji}^{(k)} m ) =
\sum_{i,j,k \in I} y_{ij}^{(k)} z_{ji}^{(k)} f( m ) = 
\sum_{i \in I} e_i f(m) = f(m).
\]
Therefore (SF1) holds. Now we show (SF2). Suppose that  
\begin{equation}\label{comm}
\varphi_*(\beta) f = g \varphi_*(\alpha)
\end{equation}
where $M$, $N$, $M'$ and $N'$ are left $A$-modules,
$\alpha \in {\rm Hom}_A(M,M')$, $\beta \in {\rm Hom}_A(N,N')$,
$f \in {\rm Hom}_B( \varphi_*(M),\varphi_*(N) )$ and $g \in {\rm Hom}_B( \varphi_*(N'), \varphi_*(M') )$.
If $m \in M$, then, from (\ref{comm}), we get that
\[
\beta( \tilde{f} (m) ) = \beta \left( \sum_{i,j,k} y_{ij}^{(k)} f ( z_{ji}^{(k)} m ) \right) =
\sum_{i,j,k} y_{ij}^{(k)} \beta( f ( z_{ji}^{(k)} m ) ) 
\]
\[
= \sum_{i,j,k} y_{ij}^{(k)} g ( \alpha ( z_{ji}^{(k)} m ) )
= \sum_{i,j,k} y_{ij}^{(k)} g ( z_{ji}^{(k)} \alpha( m ) ) = \tilde{g} (\alpha(m)).
\]
Thus (SF2) holds.
\end{proof}

\begin{prop}\label{propsepinduction}
If $A/B$ is an extension of rings with enough idempotents such that
$\{ e_i \}_{i \in I} \subseteq B$ is a complete set of idempotents for both $A$ and $B$,
then the following are equivalent:
\begin{itemize}

\item[(i)] the functor $\varphi^*$ is separable;

\item[(ii)] the map $\varphi$ splits as a $B$-bimodule map;

\item[(iii)] for all $i,j \in I$ the induced map $\varphi_{ij} : e_i B e_j \to e_i A e_j$
splits as an $e_i B e_i$-$e_j B e_j$-bimodule map.

\end{itemize}
\end{prop}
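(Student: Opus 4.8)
The plan is to mirror the proof of Proposition~\ref{propsepidempotent} but on the ``other side'', replacing the multiplication map $\mu$ by the inclusion $\varphi$ and tensor products by appropriate Hom-adjunctions, and then to decompose everything idempotent-by-idempotent exactly as the Peirce decomposition $A = \bigoplus_{i,j} e_i A e_j$ suggests. First I would establish (ii)$\Rightarrow$(iii): if $\pi : A \to B$ is a $B$-bimodule splitting of $\varphi$ (so $\pi \circ \varphi = \mathrm{id}_B$), then for each pair $i,j$ the restriction $\pi$ sends $e_i A e_j = e_i A e_j$ into $e_i B e_j$ (since $\pi(e_i a e_j) = e_i \pi(a) e_j$), and it is clearly $e_i B e_i$-$e_j B e_j$-bilinear and splits $\varphi_{ij}$. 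Conversely, for (iii)$\Rightarrow$(ii), given splittings $\pi_{ij} : e_i A e_j \to e_i B e_j$ one assembles $\pi : A \to B$ by $\pi(a) = \sum_{i,j \in I} \pi_{ij}(e_i a e_j)$; the sum is finite because $a$ lies in a finite sub-sum of Peirce components (here one uses that $\{e_i\}$ is a complete set of idempotents for $A$), and the global $B$-bimodule property follows from the individual bimodule properties together with $e_k b e_i \cdot e_i A e_j = 0$ unless the indices match up.

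Next I would handle the equivalence with separability of $\varphi^*$. The direction (i)$\Rightarrow$(ii) should follow the template of \cite[Proposition~1.3]{nastasescu1989}: one applies the separability structure $\varphi^{\varphi^*}_{M,N}$ to a canonical natural transformation. Concretely, there is a natural map $\psi_M : \varphi^*\varphi_*(M) = A \otimes_B M \to M$, $a \otimes m \mapsto am$, for each left $A$-module $M$; and on the $B$-module level $\varphi_*\psi_M$ admits a natural section $m \mapsto \sum_i e_i \otimes m$ (finite sum, well defined, $B$-linear by the same computation as in the proof of Proposition~\ref{propsepidempotent}). Transporting this section through the separability maps $\varphi^{\varphi^*}$ yields a natural $A$-linear section of $\psi_M$; specializing to $M = A$ and composing appropriately produces a $B$-bimodule splitting of $\varphi : B \to A$. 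For (ii)$\Rightarrow$(i), given a $B$-bimodule splitting $\pi : A \to B$ with $\pi \circ \varphi = \mathrm{id}_B$, one defines, for left $B$-modules $M,N$, the map $\varphi^{\varphi^*}_{M,N} : \mathrm{Hom}_A(A \otimes_B M, A \otimes_B N) \to \mathrm{Hom}_B(M,N)$ by sending $f$ to the composite $M \xrightarrow{m \mapsto \sum_i e_i \otimes m} A \otimes_B M \xrightarrow{f} A \otimes_B N \xrightarrow{\pi \otimes \mathrm{id}} B \otimes_B N = N$. Checking (SF1) uses $\pi \circ \varphi = \mathrm{id}$ and that $\sum_i e_i$ acts as a local identity; checking (SF2) is a naturality diagram chase analogous to the one in Proposition~\ref{propsepidempotent}.

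The main obstacle I anticipate is bookkeeping with the non-unital, ``enough idempotents'' framework: one must be careful that every infinite sum $\sum_{i\in I}$ that appears is actually finite when evaluated (this requires invoking $A = \bigoplus_i e_i A = \bigoplus_i A e_i$ at each step), that tensor products over the non-unital ring $B$ behave well (here the hypothesis that $\{e_i\} \subseteq B$ is a common complete set of idempotents is exactly what makes $A \otimes_B M$ and the maps between such objects manageable, since $a \otimes m = a e_i \otimes m = a \otimes e_i m$ for the relevant $i$), and that the modules in sight are unitary so that $\sum_i e_i$ genuinely acts as an identity on them. Once these finiteness and unitarity points are pinned down, each verification is a routine diagram chase. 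I would therefore structure the write-up as: (ii)$\Leftrightarrow$(iii) first (purely algebraic, no functors), then (i)$\Rightarrow$(ii), then (ii)$\Rightarrow$(i), flagging at the outset the conventions on unitary modules and the repeated use of the Peirce decomposition to guarantee finiteness.
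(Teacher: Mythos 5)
Your (ii)$\Rightarrow$(iii) restriction argument and your (ii)$\Rightarrow$(i) construction are sound; the latter is a reasonable global variant of the paper's argument (the paper instead goes (iii)$\Rightarrow$(i) using the local splittings $\psi_{ij}$). The two remaining links in your cycle both have genuine problems. For (i)$\Rightarrow$(ii): the separability data of $\varphi^*$ consists of maps $\varphi^{\varphi^*}_{M,N}\colon{\rm Hom}_A(\varphi^*(M),\varphi^*(N))\to{\rm Hom}_B(M,N)$, so it converts $A$-linear maps \emph{between induced modules} into $B$-linear maps; it cannot ``transport'' your $B$-linear section $m\mapsto\sum_i e_i\otimes m$ into ``a natural $A$-linear section of $\psi_M$''. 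Worse, an $A$-bilinear section of $\psi_A=\mu\colon A\otimes_BA\to A$ is exactly condition (ii) of Proposition \ref{propsepidempotent} (separability of $\varphi_*$, i.e.\ of the ring extension), which is a different condition from the splitting of $\varphi$. The correct route, and the one the paper takes, is to apply $\varphi^{\varphi^*}_{A,B}$ to the $A$-linear map $\gamma\colon\varphi^*(A)=A\otimes_BA\to\varphi^*(B)=A\otimes_BB$, $a\otimes a'\mapsto\sum_i aa'\otimes e_i$, which satisfies $\gamma\circ\varphi^*(\varphi)={\rm id}$; (SF1) and (SF2) then yield $\varphi^{\varphi^*}_{A,B}(\gamma)\circ\varphi={\rm id}_B$, and (SF2) applied to right multiplication maps gives right $B$-linearity.

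For (iii)$\Rightarrow$(ii): your justification of $B$-bilinearity of $\pi(a)=\sum_{i,j}\pi_{ij}(e_iae_j)$ is wrong. The products $e_kbe_i\cdot e_iae_j$ land in $e_kAe_j$ and are in general nonzero for $k\neq i$, and the hypothesis only makes $\pi_{kj}$ left $e_kBe_k$-linear, so nothing forces $\pi_{kj}(e_kbe_i\cdot e_iae_j)=e_kbe_i\cdot\pi_{ij}(e_iae_j)$; your $\pi$ is a priori only bilinear over $\bigoplus_i e_iBe_i$. This gap cannot be closed in general: for $B$ the upper triangular matrices inside $A=M_2(k)$ with $e_i=E_{ii}$, every $\varphi_{ij}$ splits (the only nontrivial one is $0\to kE_{21}$, split by the zero map), yet no $B$-bimodule retraction of $\varphi$ exists, since such a $\pi$ would have to satisfy $\pi(E_{21})\in e_2Be_1=\{0\}$ while $E_{11}=\pi(E_{12}E_{21})=E_{12}\,\pi(E_{21})$. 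So (iii)$\Rightarrow$(ii) needs an extra hypothesis such as $e_iBe_j=\{0\}$ for $i\neq j$ (which does hold in the paper's application, where $B=R_0=\bigoplus_{e}R_e$); note that the same issue affects the unverified left $B$-linearity of $\tilde f$ in the paper's own proof of (iii)$\Rightarrow$(i).
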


\begin{proof}
We adapt the second half of the proof of \cite[Proposition 1.3]{nastasescu1989} to our situation.

(i)$\Rightarrow$(ii): Suppose that $\varphi^*$ is separable.
Define the map $\gamma : A \otimes_B A \to A \otimes_B B$ by
$\gamma(a \otimes a') = \sum_{i \in I} aa' \otimes e_i$ for $a,a' \in A$.
Note that since $aa' \otimes e_i = aa' \otimes e_i^2 = aa' e_i \otimes e_i$ the last sum is finite.
Then clearly $\gamma(1 \otimes \varphi) = {\rm id}_{A \otimes_B B}$. Thus from separability of $\varphi^*$
we get that $\varphi_{A,B}^{\varphi^*}(\gamma) \varphi = {\rm id}_B$.
By a variation of the proof of (i)$\Rightarrow$(ii) in Proposition \ref{propsepidempotent},
it follows that $\varphi_{A,B}^{\varphi^*}(\gamma)$ is a $B$-bimodule homomorphism.
Hence $\varphi$ splits as a $B$-bimodule homomorphism.

(ii)$\Rightarrow$(iii): Suppose that $\varphi$ is split by an $R$-bilinear map $\psi$.
For all $i,j \in I$ define $\psi_{ij} : e_i A e_j \to e_i B e_j$ by $\psi_{ij}(a) = e_i \psi(a) e_j$ for $a \in e_i A e_j$.
Take $a \in A$, $b,b' \in B$ and $i,j \in I$. Then 
$\psi_{ij}( e_i b e_i a e_j b' e_j ) = e_i \psi( e_i b e_i a e_j b' e_j) e_j = e_i b e_i \psi(a) e_j b' e_j$.
Therefore $\psi_{ij}$ is an $e_i B e_i$-$e_j B e_j$-bimodule map.
Also 
\[
\psi_{ij} ( \varphi_{ij} ( e_i a e_j ) ) = e_i \psi ( e_i \varphi(a) e_j ) e_j = e_i \psi(\varphi(a)) e_j = e_i a e_j
\]
which implies that $\varphi_{ij}$ splits as an $e_i B e_i$-$e_j B e_j$-bimodule map.

(iii)$\Rightarrow$(i): Suppose that for all $i,j \in I$ the induced $e_i B e_i$-$e_j B e_j$-bimodule map 
$\varphi_{ij} : e_i B e_j \to e_i A e_j$ is split by a map $\psi_{ij}$.
Suppose that $M$ and $N$ are left $B$-modules.
We define 
$
\varphi_{M,N}^{\varphi^*} : {\rm Hom}_A(\varphi^*(M),\varphi^*(N)) \to {\rm Hom}_B(M,N) 
$
in the following way. Take $f \in {\rm Hom}_A( \varphi^*(M) , \varphi^*(N) )$
and put $\varphi_{M,N}^{\varphi^*}(f) = \tilde{f}$ where 
\[
\tilde{f}(m) = \sum_{i,j \in I} ( \rho \circ ( \psi_{ij} \otimes {\rm id}_N ) )( f(e_i \otimes m) ),
\]
for $m \in M$, where $\rho : B \otimes_B N \rightarrow N$ denotes the multiplication map.
If $f = \varphi^*(g) = {\rm id}_A \otimes g$ for some $g \in {\rm Hom}_B( M , N )$ and $m \in M$, then 
\[
\tilde{f}(m) = \sum_{i,j \in I} ( \rho \circ ( \psi_{ij} \otimes {\rm id}_N ) )( e_i \otimes g(m) ) =
\sum_{i,j \in I} e_i e_j g(m) = \sum_{i \in J} g(e_i m) = g(m). 
\]
Therefore (SF1) holds. Now we show (SF2). Suppose that  
\begin{equation}\label{commagain}
\varphi^*(\beta) f = g \varphi^*(\alpha)
\end{equation}
where $M$, $N$, $M'$ and $N'$ are left $B$-modules,
$\alpha \in {\rm Hom}_B(M,M')$, $\beta \in {\rm Hom}_B(N,N')$,
$f \in {\rm Hom}_A( \varphi^*(M),\varphi^*(N) )$ and $g \in {\rm Hom}_A( \varphi^*(N'), \varphi^*(M') )$.
If $m \in M$, then, from (\ref{commagain}), we get that
\[
\beta( \tilde{f} (m) ) = \beta \left(  \sum_{i,j \in I} ( \rho \circ ( \psi_{ij} \otimes {\rm id}_N ) )( f(e_i \otimes m) )  \right) 
\]
\[
= \sum_{i,j \in I} \rho \circ ( \psi_{ij} \otimes {\rm id}_N ) \circ \varphi^*(\beta) \circ f (e_i \otimes m) 
\]
\[
= \sum_{i,j \in I} \rho \circ ( \psi_{ij} \otimes {\rm id}_N ) \circ g \circ \varphi^*(\alpha) (e_i \otimes m) 
\]
\[
= \sum_{i,j \in I} ( \rho \circ ( \psi_{ij} \otimes {\rm id}_N ) )( g(e_i \otimes \alpha(m)) )  = \tilde{g} (\alpha(m)).
\]
Thus (SF2) holds.
\end{proof}

\subsection*{Strongly graded rings}

From now on, let $\G$ be a groupoid and suppose that $R$ is an object unital strongly $\G$-graded ring.

\begin{prop}\label{propR0inR}
The induction functor $\varphi^* : R_0\mbox{-Mod} \to R\mbox{-Mod}$,
associated to the inclusion map $\varphi : R_0 \to R$, is separable. 
\end{prop}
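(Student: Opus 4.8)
The claim is that the induction functor $\varphi^* : R_0\text{-Mod} \to R\text{-Mod}$ is separable, where $R$ is object unital strongly $\G$-graded and $R_0 = \oplus_{e \in \G_0} R_e$. The natural strategy is to invoke Proposition~\ref{propsepinduction}, which reduces separability of $\varphi^*$ to the statement that $\varphi : R_0 \to R$ splits as an $R_0$-bimodule map. So the plan is to construct an explicit $R_0$-bimodule retraction $\psi : R \to R_0$ of the inclusion. The obvious candidate is the projection onto the $R_0$-homogeneous part: for $r = \sum_{\si \in \G} r_\si$ with $r_\si \in R_\si$, set $\psi(r) = \sum_{e \in \G_0} r_e$.

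The verification proceeds in a few routine steps. First, $\psi$ is well defined and additive since the $\G$-grading decomposition is unique, and clearly $\psi \circ \varphi = \mathrm{id}_{R_0}$ since an element of $R_0$ has all its non-unit-degree components equal to zero. Next I would check $R_0$-bilinearity. Given $a, b \in R_0$, write $a = \sum_{e} a_e$ and $b = \sum_{f} b_f$; it suffices to check $\psi(a_e \, r \, b_f) $ behaves correctly on a homogeneous $r \in R_\si$, since everything is additive. The product $a_e r_\si b_f$ lies in $R_{e} R_\si R_{f} \subseteq R_{e \si f}$ whenever the composites are defined, and is zero otherwise; and $e\si f$ (when defined) lies in $\G_0$ if and only if $\si \in \G_0$ — indeed, if $e\si f \in \G_0$ then $d(e\si f) = r(e\si f)$, and since $d(e\si f) = f$ needs $f = d(\si)$ and $r(e\si f) = e$ needs $e = r(\si)$, one gets $d(\si) = f = e = r(\si)$, forcing $\si \in \G(e) \cap \G_0 = \{e\}$ up to the unit. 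Tracking this carefully shows $\psi(a r b) = a\,\psi(r)\,b$, i.e. $\psi$ is an $R_0$-bimodule homomorphism.

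With the retraction in hand, the conclusion is immediate from the equivalence (i)$\Leftrightarrow$(ii) of Proposition~\ref{propsepinduction}, applied with the complete set of idempotents $\{1_{R_e}\}_{e \in \G_0}$ (which lies in $R_0$ and serves as a complete set of idempotents for both $R_0$ and $R$ by object unitality — recall object unital rings have enough idempotents). Note that strong gradedness is not actually needed for this particular statement; it is the \emph{other} functor $\varphi_*$ (restriction) whose separability is delicate and requires the finiteness and trace conditions of Theorem~\ref{maintheorem1}.

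**Main obstacle.** There is essentially no hard analytic or structural obstacle here; the only thing requiring genuine care is the bookkeeping of groupoid composability in verifying that $\psi$ is $R_0$-bilinear — specifically, confirming that left/right multiplication by homogeneous unit-degree elements cannot move a non-unit-degree component into unit degree, and cannot move a unit-degree component out of $R_0$. This is where one uses the groupoid axioms $d(\si\ta) = d(\ta)$, $r(\si\ta) = r(\si)$ together with the grading rule $R_\si R_\ta = 0$ for non-composable pairs. Once that is dispatched, the proposition follows formally.
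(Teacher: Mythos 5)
Your proof is correct and follows essentially the same route as the paper: both split the inclusion $\varphi : R_0 \to R$ by projecting onto the unit-degree components and then invoke Proposition~\ref{propsepinduction} (you via criterion (ii), the paper via the componentwise criterion (iii), which amounts to the same maps $\psi_{e,f}$). One small remark: when $e\sigma f$ is defined one has $e\sigma f=\sigma$ outright, so the composability bookkeeping reduces to $R_e R_\sigma R_f\subseteq R_\sigma$; your detour through $d(e\sigma f)=r(e\sigma f)$ only yields $\sigma\in\G(e)$, not $\sigma\in\G_0$, so that particular sub-argument should be replaced by this simpler observation (the conclusion you need is unaffected).
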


\begin{proof}
Take $e,f \in \G$. Define $\psi_{e,f} : R \to R_0$ in the following way.
Take $\sigma \in \G$ and $r_{\sigma} \in R_{\sigma}$. If $e \neq f$, then put $\psi_{e,f}(r_{\sigma}) = 0$.
Put $\psi_{e,e}(r_{\sigma})=0$, if $r(\sigma) \neq e$ or $r(\sigma) \neq e$.
Finally, if $d(\sigma)=e=r(\sigma)$, then put $\psi_{e,e}(r_{\sigma}) = r_{\sigma}$.
It is clear that $\psi_{e,f}$ is an $R_e$-$R_f$-bimodule splitting of $\varphi_{e,f}$.
The claim now follows from Proposition \ref{propsepinduction}. 
\end{proof}

\begin{rem}
It is easy to see that the conclusion in Proposition \ref{propR0inR} holds for any 
object unital (not necessarily strongly) $\G$-graded ring. 
\end{rem}

We now turn to the question of separability of the restriction functor 
$\varphi_* : R\mbox{-Mod} \to R_0\mbox{-Mod}$
associated to the inclusion map $\varphi : R_0 \to R$.
To this end, we need some more notions and results.

\begin{defi}
Take $\si \in \G$. 
From the relation $1_{R_r(\si)} \in R_\si R_{\si^{-1}}$ it follows that there is $n_\si \in \N$,
and $u_\si^{(i)} \in R_\si$ and $v_{\si^{-1}}^{(i)} \in R_{\si^{-1}}$, for $i \in \{ 1,\ldots,n_\si \}$,
such that
\[
1_{R_{r(\si)}} = \sum_{i=1}^{n_\si} u_\si^{(i)} v_{\si^{-1}}^{(i)}.
\]
Unless otherwise stated, the elements $u_\si^{(i)}$ and $v_{\si^{-1}}^{(i)}$ are fixed.
We also assume that if $e \in \G_0$, then $n_e = 1$ and that $u_e^{(1)} = v_e^{(1)} = 1_{R_e}$.
Define the additive map $\gamma_\si : R \to R$ by 
\[
\gamma_\si(r) =  \sum_{i=1}^{n_\si} u_\si^{(i)} r v_{\si^{-1}}^{(i)}, 
\]
for $r \in R.$
\end{defi}

\begin{prop}\label{firstrestriction}
Take $\si \in \G$. 
The additive map $\gamma_\si : R \to R$
restricts to an additive map  $R_0 \to R_{r(\si)}$.
This map, in turn, restricts to an additive map $R_{d(\si)} \to R_{r(\si)}$.
\end{prop}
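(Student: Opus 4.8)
The plan is to prove the two claimed facts in sequence, both of which reduce to tracking the degree of $\gamma_\si(r)$ for homogeneous $r$. First I would recall that for $\si\in\G$ we have chosen $u_\si^{(i)}\in R_\si$ and $v_{\si^{-1}}^{(i)}\in R_{\si^{-1}}$ with $\sum_i u_\si^{(i)}v_{\si^{-1}}^{(i)}=1_{R_{r(\si)}}$, so $\gamma_\si(r)=\sum_i u_\si^{(i)}\,r\,v_{\si^{-1}}^{(i)}$. Since $\gamma_\si$ is additive, to understand its restriction to $R_0=\oplus_{e\in\G_0}R_e$ it suffices to evaluate it on each homogeneous summand $R_e$, $e\in\G_0$, and in fact on an arbitrary homogeneous $r_\tau\in R_\tau$ for $\tau\in\G$.

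The key computation is this: for $r_\tau\in R_\tau$, the product $u_\si^{(i)}\,r_\tau\,v_{\si^{-1}}^{(i)}$ lies in $R_\si R_\tau R_{\si^{-1}}$. By the grading axiom this is zero unless both $(\si,\tau)\in\G_2$ and $(\si\tau,\si^{-1})\in\G_2$, i.e. unless $d(\si)=r(\tau)$ and $d(\si\tau)=d(\tau)=r(\si^{-1})=d(\si)$; equivalently $r(\tau)=d(\tau)=d(\si)$, so $\tau\in\G(d(\si))$ and in particular $\tau$ is a loop at $d(\si)$. When this holds the product lands in $R_{\si\tau\si^{-1}}$, and $\si\tau\si^{-1}$ is a loop at $r(\si)$. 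Hence $\gamma_\si(r_\tau)\in R_{\si\tau\si^{-1}}\subseteq R_{\G(r(\si))}$ for such $\tau$, and $\gamma_\si(r_\tau)=0$ otherwise.

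Specializing to $r\in R_0$: only the summands with $\tau=e\in\G_0$ can survive, and among those only $\tau=d(\si)$ survives (since we need $d(\tau)=d(\si)$ with $\tau$ a unit). For $\tau=d(\si)$ we get $\si\tau\si^{-1}=\si\,d(\si)\,\si^{-1}=\si\si^{-1}=r(\si)$, so $\gamma_\si$ sends $R_{d(\si)}$ into $R_{r(\si)}$ and kills every other homogeneous component of $R_0$. This establishes both assertions at once: $\gamma_\si|_{R_0}$ has image in $R_{r(\si)}$, and this restricted map already factors through the single summand $R_{d(\si)}$, so it restricts further to an additive map $R_{d(\si)}\to R_{r(\si)}$. (Additivity of the restrictions is immediate since $\gamma_\si$ is additive and the subgroups $R_{d(\si)}$, $R_{r(\si)}$ are additive subgroups.)

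I do not anticipate a genuine obstacle here; the statement is essentially a bookkeeping consequence of the grading axiom $R_\si R_\tau\subseteq R_{\si\tau}$ with the convention that the product vanishes on non-composable pairs. The only point requiring a little care is getting the composability conditions right on both sides of $r_\tau$ simultaneously — that is, checking that $u_\si^{(i)} r_\tau v_{\si^{-1}}^{(i)}$ forces $\tau$ to be a loop at $d(\si)$ rather than merely requiring $(\si,\tau)\in\G_2$. Writing out $d(\si\tau)=d(\tau)$ and matching it against $r(\si^{-1})=d(\si)$ handles this cleanly.
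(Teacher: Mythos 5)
Your proof is correct and follows essentially the same route as the paper's: decompose $r\in R_0$ into its components $r_e$, use the grading axiom to see that $u_\si^{(i)} r_e v_{\si^{-1}}^{(i)}$ vanishes unless $e = d(\si)$, and observe that the surviving terms lie in $R_\si R_{d(\si)} R_{\si^{-1}} \subseteq R_{r(\si)}$. Your extra discussion of general homogeneous $r_\tau$ is more than is needed but does no harm.
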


\begin{proof}
Take $r \in R_0$. Then, since $u_{\si}^{(i)} r_e = 0$, for $e \in \G_0 \setminus \{ d(\si) \}$, we get that
\begin{eqnarray*}
\gamma_\si(r) &=& \sum_{i=1}^{n_\si} u_\si^{(i)} r v_{\si^{-1}}^{(i)} 
               = \sum_{i=1}^{n_\si} \sum_{e \in \G_0} u_\si^{(i)} r_e v_{\si^{-1}}^{(i)} \\
              &=& \sum_{i=1}^{n_\si} u_\si^{(i)} r_{d(\si)} v_{\si^{-1}}^{(i)} 
               = \gamma_\si(r_{d(\si)}).
\end{eqnarray*}
Since $u_\si^{(i)} r_{d(\si)} v_{\si^{-1}}^{(i)} \in R_\si R_{d(\si)} R_{\si^{-1}} \subseteq R_{r(\si)}$ the claim follows.
\end{proof}

\begin{rem}\label{remarkcenter}
It is clear that $Z(R_0) = \oplus_{e \in \G_0} Z(R_e)$.
\end{rem}

\begin{prop}
If $\si \in \G$ and $r \in Z(R_0)$, then the definition of $\gamma_{\si}(r)$ does not depend 
on the choice of the elements $u_{\si}^{(i)}$ and $v_{\si^{-1}}^{(i)}$.
\end{prop}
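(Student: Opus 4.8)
The plan is to show that $\gamma_\si(r)$ is independent of the chosen data whenever $r$ lies in $Z(R_0)$, by comparing any two such choices through the unital structure of the principal components. Suppose we have two presentations
\[
1_{R_{r(\si)}} = \sum_{i=1}^{n_\si} u_\si^{(i)} v_{\si^{-1}}^{(i)} = \sum_{j=1}^{m_\si} p_\si^{(j)} q_{\si^{-1}}^{(j)},
\]
with $u_\si^{(i)}, p_\si^{(j)} \in R_\si$ and $v_{\si^{-1}}^{(i)}, q_{\si^{-1}}^{(j)} \in R_{\si^{-1}}$. Write $\gamma_\si(r) = \sum_i u_\si^{(i)} r v_{\si^{-1}}^{(i)}$ and $\gamma_\si'(r) = \sum_j p_\si^{(j)} r q_{\si^{-1}}^{(j)}$. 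The key algebraic manipulation is to insert the two resolutions of $1_{R_{r(\si)}}$ (resp.\ of $1_{R_{d(\si)}}$) at appropriate places and slide $r$ across.

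The main step goes as follows. Fix $r \in Z(R_0)$. First note that for any $i,j$, the element $v_{\si^{-1}}^{(i)} p_\si^{(j)}$ lies in $R_{\si^{-1}} R_\si \subseteq R_{d(\si)} \subseteq R_0$, and hence $r$ commutes with it. Then I would compute
\[
\gamma_\si(r) = \sum_i u_\si^{(i)} r v_{\si^{-1}}^{(i)} = \sum_i u_\si^{(i)} r v_{\si^{-1}}^{(i)} 1_{R_{d(\si)}} = \sum_{i,j} u_\si^{(i)} r v_{\si^{-1}}^{(i)} p_\si^{(j)} q_{\si^{-1}}^{(j)},
\]
using that $\sum_j p_\si^{(j)} q_{\si^{-1}}^{(j)}$ is $1_{R_{r(\si)}} = 1_{R_{d(\si^{-1})}}$ acting as a right identity on $v_{\si^{-1}}^{(i)} \in R_{\si^{-1}}$ — here one must be a touch careful that $r(\si) = d(\si^{-1})$, so object unitality gives $v_{\si^{-1}}^{(i)} 1_{R_{r(\si)}} = v_{\si^{-1}}^{(i)}$. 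Since $v_{\si^{-1}}^{(i)} p_\si^{(j)} \in R_0$ and $r \in Z(R_0)$, I can move $r$ past it: $r v_{\si^{-1}}^{(i)} p_\si^{(j)} = v_{\si^{-1}}^{(i)} p_\si^{(j)} r$. This turns the double sum into $\sum_{i,j} u_\si^{(i)} v_{\si^{-1}}^{(i)} p_\si^{(j)} r q_{\si^{-1}}^{(j)} = \sum_j \bigl(\sum_i u_\si^{(i)} v_{\si^{-1}}^{(i)}\bigr) p_\si^{(j)} r q_{\si^{-1}}^{(j)} = \sum_j 1_{R_{r(\si)}} p_\si^{(j)} r q_{\si^{-1}}^{(j)} = \sum_j p_\si^{(j)} r q_{\si^{-1}}^{(j)} = \gamma_\si'(r)$, where the last use of object unitality is that $1_{R_{r(\si)}} p_\si^{(j)} = p_\si^{(j)}$ since $p_\si^{(j)} \in R_\si$.

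The step I expect to require the most care is bookkeeping the domains and ranges: one must check at each insertion that the idempotent being inserted is the correct $1_{R_e}$ for the homogeneous component it is meant to fix (e.g.\ $1_{R_{r(\si)}}$ versus $1_{R_{d(\si)}}$, and the identification $r(\si) = d(\si^{-1})$, $d(\si) = r(\si^{-1})$). Once the degrees are tracked correctly, everything reduces to the single observation that products of the form $R_{\si^{-1}} R_\si$ land in $R_0$ so that central elements of $R_0$ commute with them, together with the fact that $\sum_i u_\si^{(i)} v_{\si^{-1}}^{(i)}$ and $\sum_j p_\si^{(j)} q_{\si^{-1}}^{(j)}$ are \emph{both} equal to the same idempotent $1_{R_{r(\si)}}$. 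I would also remark that the same computation, read with $\si$ replaced by $\si^{-1}$ or applied on the other side, shows consistency, but the one-sided argument above already suffices. Finally, since the stipulated normalization $n_e = 1$, $u_e^{(1)} = v_e^{(1)} = 1_{R_e}$ for $e \in \G_0$ is a particular choice of data, independence of the choice in particular shows $\gamma_e(r) = r$ for $r \in Z(R_e)$, which will be convenient later.
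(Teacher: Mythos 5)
Your argument is correct and is essentially the paper's proof in mirror image: the paper inserts the second resolution of $1_{R_{r(\si)}}$ on the left of $u_\si^{(i)}$ and commutes $r$ past $t_{\si^{-1}}^{(j)} u_\si^{(i)} \in R_0$, whereas you insert it on the right of $v_{\si^{-1}}^{(i)}$ and commute $r$ past $v_{\si^{-1}}^{(i)} p_\si^{(j)} \in R_0$; both then collapse the remaining resolution to $1_{R_{r(\si)}}$. The only slip is in your first display, where the inserted idempotent should be written $1_{R_{r(\si)}}$ rather than $1_{R_{d(\si)}}$ (otherwise the product with $v_{\si^{-1}}^{(i)}$ would vanish in general), exactly as your own surrounding text correctly explains via $r(\si) = d(\si^{-1})$.
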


\begin{proof}
Take $m_{\si} \in \mathbb{N}$,
$s_{\si}^{(j)} \in R_{\si}$ and $t_{\si^{-1}}^{(j)} \in R_{\si^{-1}}$,
for $j \in \{1,\ldots,m_{\si} \}$, such that 
$\sum_{j=1}^{m_{\si}} s_{\si}^{(j)} t_{\si^{-1}}^{(j)} = 1_{R_{r(\si)}}$. Then
\[
\gamma_{\si}(r) =
\sum_{i=1}^{n_{\si}} u_{\si}^{(i)} r v_{\si^{-1}}^{(i)} 
=  \sum_{i=1}^{n_{\si}} 1_{R_{r(\si)}} u_{\si}^{(i)} r v_{\si^{-1}}^{(i)} 
= \sum_{i=1}^{n_{\si}} \sum_{j=1}^{m_{\si}} s_{\si}^{(j)} t_{\si^{-1}}^{(j)} u_{\si}^{(i)} r v_{\si^{-1}}^{(i)}.
\]
Since $t_{\si^{-1}}^{(j)} u_{\si}^{(i)} \in R_0$ and $r \in Z(R_0)$, the last sum equals
\[
\sum_{i=1}^{n_{\si}} \sum_{j=1}^{m_{\si}} s_{\si}^{(j)} r t_{\si^{-1}}^{(j)} u_{\si}^{(i)}  v_{\si^{-1}}^{(i)}
 =   \sum_{j=1}^{m_{\si}} s_{\si}^{(j)} r t_{\si^{-1}}^{(j)} 1_{R_{r(\si)}} 
 =   \sum_{j=1}^{m_{\si}} s_{\si}^{(j)} r t_{\si^{-1}}^{(j)}.
\]
\end{proof}

\begin{prop}\label{composition}
If $\si,\tau \in \G$ and $r \in Z(R_0)$, then
\begin{itemize}

\item[(a)] if $(\si,\tau) \notin \G_2$, then $\gamma_\si ( \gamma_\tau (r) ) = 0$, and

\item[(b)] if $(\si,\tau) \in \G_2$, then $\gamma_\si( \gamma_\tau ( r ) ) = \gamma_{\si\tau}(r_{d(\tau)}) 1_{R_{r(\si)}}$.

\end{itemize}
\end{prop}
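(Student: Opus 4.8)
The plan is to unwind the definitions of $\gamma_\si$ and $\gamma_\tau$ and compute directly, using the hypothesis $r \in Z(R_0)$ to move $r$ past homogeneous elements of degree in $\G_0$. First I would write
\[
\gamma_\si(\gamma_\tau(r)) = \sum_{i=1}^{n_\si} \sum_{j=1}^{n_\tau} u_\si^{(i)} u_\tau^{(j)} r\, v_{\tau^{-1}}^{(j)} v_{\si^{-1}}^{(i)},
\]
noting that by Proposition \ref{firstrestriction} we may replace $r$ by $r_{d(\tau)}$ throughout, so it suffices to treat $r \in Z(R_{d(\tau)})$. For part (a): if $(\si,\tau) \notin \G_2$, then $d(\si) \neq r(\tau)$, and since $u_\si^{(i)} \in R_\si$ satisfies $u_\si^{(i)} = u_\si^{(i)} 1_{R_{d(\si)}}$ while $u_\tau^{(j)} \in R_\tau = R_{r(\tau)} R_\tau$ gives $u_\tau^{(j)} = 1_{R_{r(\tau)}} u_\tau^{(j)}$, the product $u_\si^{(i)} u_\tau^{(j)}$ lies in $R_\si R_\tau = \{0\}$; hence every term vanishes.

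For part (b), with $(\si,\tau) \in \G_2$, the key manoeuvre is to insert a resolution of the identity $1_{R_{r(\tau)}} = 1_{R_{d(\si)}} = \sum_k u_\si^{(k)\prime} v_{\si^{-1}}^{(k)\prime}$ — actually more cleanly, I would observe that $v_{\tau^{-1}}^{(j)} v_{\si^{-1}}^{(i)} \in R_{\tau^{-1}} R_{\si^{-1}} \subseteq R_{\tau^{-1}\si^{-1}} = R_{(\si\tau)^{-1}}$ and $u_\si^{(i)} u_\tau^{(j)} \in R_\si R_\tau \subseteq R_{\si\tau}$, but these individual pieces need not sum to $1_{R_{r(\si)}}$ in the pattern required by $\gamma_{\si\tau}$. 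So instead I would exploit independence of the defining elements: by the proposition preceding this one, $\gamma_{\si\tau}(r_{d(\tau)})$ does not depend on the chosen resolution of $1_{R_{r(\si\tau)}} = 1_{R_{r(\si)}}$, provided $r_{d(\tau)} \in Z(R_0)$ — which holds since $r \in Z(R_0)$ and $d(\si\tau) = d(\tau)$. The natural candidate resolution is
\[
1_{R_{r(\si)}} = \gamma_\si(1_{R_{d(\si)}}) = \sum_{i,k} u_\si^{(i)} u_\si^{(k)} v_{\si^{-1}}^{(k)} v_{\si^{-1}}^{(i)}
\]
combined with the one for $\tau$; more precisely I would compute $\gamma_\si(\gamma_\tau(r_{d(\tau)}))$ and recognize it as $\gamma_{\si\tau}(r_{d(\tau)})$ computed relative to the resolution $1_{R_{r(\si)}} = \sum_{i,j} (u_\si^{(i)} u_\tau^{(j)})(v_{\tau^{-1}}^{(j)} v_{\si^{-1}}^{(i)})$ — here I would need that $\sum_{i,j} u_\si^{(i)} u_\tau^{(j)} v_{\tau^{-1}}^{(j)} v_{\si^{-1}}^{(i)} = \sum_i u_\si^{(i)} 1_{R_{r(\tau)}} v_{\si^{-1}}^{(i)} = \sum_i u_\si^{(i)} v_{\si^{-1}}^{(i)} = 1_{R_{r(\si)}}$, using $d(\si) = r(\tau)$ and object unitality so $u_\si^{(i)} 1_{R_{d(\si)}} = u_\si^{(i)}$.

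The trailing factor $1_{R_{r(\si)}}$ in the statement is what accounts for the possibility that, after the substitution $r \mapsto r_{d(\tau)}$, the element $\gamma_{\si\tau}(r_{d(\tau)})$ already lands in $R_{r(\si\tau)} = R_{r(\si)}$ and is fixed on the right by $1_{R_{r(\si)}}$ — so in fact $\gamma_{\si\tau}(r_{d(\tau)}) 1_{R_{r(\si)}} = \gamma_{\si\tau}(r_{d(\tau)})$ and the factor is cosmetic given Proposition \ref{firstrestriction}; I would keep it in the statement for bookkeeping but note this. The main obstacle I anticipate is the careful juggling of which resolution of which identity is being used and verifying the independence hypothesis $r_{d(\tau)} \in Z(R_0)$ applies (it does, by Remark \ref{remarkcenter}), together with ensuring the composed elements $u_\si^{(i)} u_\tau^{(j)}$ genuinely satisfy the product-one relation; once that is pinned down, the rest is a direct substitution using centrality of $r$ to slide it past the degree-zero elements $v_{\tau^{-1}}^{(j)} u_\tau^{(j)}$-type products as in the preceding proposition.
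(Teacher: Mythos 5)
Your argument is correct and, at bottom, rests on the same computation as the paper's, but it is packaged differently and somewhat more transparently. The paper proves (b) by inserting the fixed resolution $1_{R_{r(\si\tau)}}=\sum_k u_{\si\tau}^{(k)}v_{(\si\tau)^{-1}}^{(k)}$ into the double sum and then sliding $r$ past the degree-zero products $v_{(\si\tau)^{-1}}^{(k)}u_\si^{(j)}u_\tau^{(i)}$ by centrality --- in effect re-running the proof of the preceding independence proposition inline. You instead observe that the double sum $\sum_{i,j}\bigl(u_\si^{(i)}u_\tau^{(j)}\bigr)\,r\,\bigl(v_{\tau^{-1}}^{(j)}v_{\si^{-1}}^{(i)}\bigr)$ is literally $\gamma_{\si\tau}(r)$ computed with respect to the alternative resolution $\{(u_\si^{(i)}u_\tau^{(j)},\,v_{\tau^{-1}}^{(j)}v_{\si^{-1}}^{(i)})\}_{i,j}$ of $1_{R_{r(\si\tau)}}=1_{R_{r(\si)}}$ --- your verification that these products lie in $R_{\si\tau}$ and $R_{(\si\tau)^{-1}}$ and sum to $1_{R_{r(\si)}}$ is exactly the needed hypothesis --- and then invoke the independence proposition directly, together with $r_{d(\tau)}\in Z(R_0)$ via Remark \ref{remarkcenter}. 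This buys a shorter proof and makes clear what the independence proposition is for. Your observation that the trailing factor $1_{R_{r(\si)}}$ is redundant is also correct, since $\gamma_{\si\tau}(r_{d(\tau)})\in R_{r(\si\tau)}=R_{r(\si)}$ and object unitality gives $x\,1_{R_e}=x$ for $x\in R_e$; the paper keeps the factor only because its computation naturally terminates with it. Two cosmetic points: in (a) the appeal to object unitality is unnecessary, since $u_\si^{(i)}u_\tau^{(j)}\in R_\si R_\tau=\{0\}$ by the grading axiom alone; and the display $1_{R_{r(\si)}}=\sum_{i,k}u_\si^{(i)}u_\si^{(k)}v_{\si^{-1}}^{(k)}v_{\si^{-1}}^{(i)}$ is a false start that should be deleted (the products $u_\si^{(i)}u_\si^{(k)}$ vanish unless $\si$ is composable with itself), but your subsequent, correct choice of resolution supersedes it.
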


\begin{proof}  
Take $\si,\tau \in \G$ and $r \in Z(R_0)$.
From the definitions of $\gamma_\si$ and $\gamma_\tau$, it follows that
\[
\gamma_\si ( \gamma_\tau ( r ) ) 
= \sum_{i=1}^{n_\tau} \gamma_\tau ( u_\tau^{(i)} r v_{\tau^{-1}}^{(i)} ) 
= \sum_{i=1}^{n_\tau} \sum_{j=1}^{n_\si} u_\si^{(j)} u_\tau^{(i)} r v_{\tau^{-1}}^{(i)} v_{\si^{-1}}^{(j)}.
\]
If $(\si,\tau) \notin \G_2$, then $u_\si^{(j)} u_\tau^{(i)}=$ so that $\gamma_\si ( \gamma_\tau (r) ) = 0$.
Now suppose that $(\si,\tau) \in \G_2$.
Since $u_\si^{(j)} u_\tau^{(i)} \in R_{\si\tau}$, the last sum equals
\begin{displaymath}
\sum_{i=1}^{n_\tau} \sum_{j=1}^{n_\si} 
1_{R_{r(\si\tau)}} u_\si^{(j)} u_\tau^{(i)} r v_{\tau^{-1}}^{(i)} v_{\si^{-1}}^{(j)} =
\sum_{i=1}^{n_\si} \sum_{j=1}^{n_\si} \sum_{k=1}^{n_{\si\tau}}  
u_{\si\tau}^{(k)} v_{\tau^{-1} \si^{-1}}^{(k)}
u_\si^{(j)} u_\tau^{(i)} r v_{\tau^{-1}}^{(i)} v_{\si^{-1}}^{(j)}.
\end{displaymath}
Since $r \in Z(R_0)$ and 
$v_{\tau^{-1} \si^{-1}}^{(k)} u_\si^{(j)} u_\tau^{(i)} \in R_0$, the last sum equals
\begin{displaymath}
\sum_{i=1}^{n_\tau} \sum_{j=1}^{n_\si} \sum_{k=1}^{n_{\si\tau}}  
u_{\si\tau}^{(k)} r v_{\tau^{-1} \si^{-1}}^{(k)}
u_\si^{(j)} u_\tau^{(i)} v_{\tau^{-1}}^{(i)} v_{\si^{-1}}^{(j)} = 
\sum_{j=1}^{n_\si} \sum_{k=1}^{n_{\si\tau}}  
u_{\si\tau}^{(k)} r v_{\tau^{-1} \si^{-1}}^{(k)}
u_\si^{(j)} 1_{R_{r(\tau)}} v_{\tau^{-1}}^{(j)}.
\end{displaymath}
Since $d(\si)=r(\tau)$, we get that
the last sum equals
\begin{displaymath}
\sum_{j=1}^{n_\si} \sum_{k=1}^{n_{\si\tau}}  
u_{\si\tau}^{(k)} r v_{\tau^{-1} \si^{-1}}^{(k)}
u_\si^{(j)} v_{\si^{-1}}^{(j)} = 
\sum_{k=1}^{n_{\si\tau}}  
u_{\si\tau}^{(k)} r v_{\tau^{-1} \si^{-1}}^{(k)} 1_{R_{r(\si)}} = 
\gamma_{\si\tau}(r) 1_{R_{r(\si)}}.
\end{displaymath}
\end{proof}

\begin{prop}\label{restriction}
Take $\si \in \G$. 
The additive map $\gamma_\si : R \to R$
restricts to a surjective ring homomorphism 
$Z(R_0) \rightarrow Z(R_{r(\si)})$.
This map, in turn, restricts to a ring isomorphism
$Z(R_{d(\si)}) \rightarrow Z(R_{r(\si)})$ such that
if $(\si,\tau) \in \G_2$ and $r \in R_{d(\tau)}$, then $\gamma_\si ( \gamma_\tau ( r ) ) = \gamma_{\si \tau}(r)$.
\end{prop}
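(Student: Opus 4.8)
The plan is to deduce everything from Proposition~\ref{composition}, the decomposition $Z(R_0) = \oplus_{e \in \G_0} Z(R_e)$ of Remark~\ref{remarkcenter}, the resolution $1_{R_{r(\si)}} = \sum_{i=1}^{n_\si} u_\si^{(i)} v_{\si^{-1}}^{(i)}$, and object unitality; recall also that $\gamma_e = \mathrm{id}$ on $R_e$ for $e \in \G_0$ (since $n_e = 1$ and $u_e^{(1)} = v_e^{(1)} = 1_{R_e}$) and that, by the computation in the proof of Proposition~\ref{firstrestriction}, $\gamma_\si(r) = \gamma_\si(r_{d(\si)})$ for every $r \in R_0$. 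In particular $\gamma_\si|_{Z(R_0)}$ is the composite of the ring projection $Z(R_0) \to Z(R_{d(\si)})$ with $\gamma_\si|_{Z(R_{d(\si)})}$, so it suffices to analyse the latter and then read off the statement for $Z(R_0)$.

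First I would show that $\gamma_\si$ maps $Z(R_{d(\si)})$ into $Z(R_{r(\si)})$ and is a unital ring homomorphism. For $r,r' \in Z(R_{d(\si)})$, expanding the product and writing $w_{ij} = v_{\si^{-1}}^{(i)} u_\si^{(j)} \in R_{\si^{-1}} R_\si \subseteq R_{d(\si)}$ gives $\gamma_\si(r)\gamma_\si(r') = \sum_{i,j} u_\si^{(i)} r\, w_{ij}\, r'\, v_{\si^{-1}}^{(j)}$; since $r,r'$ are central in $R_{d(\si)}$ one may replace $r\, w_{ij}\, r'$ by $w_{ij}\,(rr')$, then collapse the $i$-sum through $\sum_i u_\si^{(i)} v_{\si^{-1}}^{(i)} = 1_{R_{r(\si)}}$ and object unitality to obtain $\sum_j u_\si^{(j)} (rr') v_{\si^{-1}}^{(j)} = \gamma_\si(rr')$. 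Additivity is clear, and $\gamma_\si(1_{R_{d(\si)}}) = \sum_i u_\si^{(i)} v_{\si^{-1}}^{(i)} = 1_{R_{r(\si)}}$. For centrality of the image I would first verify the two auxiliary identities $\gamma_\si(r)\,a = a\,r$ for $a \in R_\si$ and $b\,\gamma_\si(r) = r\,b$ for $b \in R_{\si^{-1}}$: in each case $v_{\si^{-1}}^{(i)} a$, resp.\ $b\, u_\si^{(i)}$, lies in $R_{d(\si)}$, so $r$ may be slid past it, after which the resolution of $1_{R_{r(\si)}}$ collapses the sum and object unitality finishes. Since $R$ is strongly graded, Proposition~\ref{propstronglygraded}(iii) gives $R_{r(\si)} = R_\si R_{\si^{-1}}$, so any $s \in R_{r(\si)}$ is a finite sum $\sum_k a_k b_k$ with $a_k \in R_\si$, $b_k \in R_{\si^{-1}}$, and the two identities yield $s\,\gamma_\si(r) = \sum_k a_k\, r\, b_k = \gamma_\si(r)\, s$, so $\gamma_\si(r) \in Z(R_{r(\si)})$.

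Next, surjectivity and injectivity. Applying the previous paragraph to $\si^{-1}$ (note $r(\si^{-1}) = d(\si)$), $\gamma_{\si^{-1}}$ sends $Z(R_{r(\si)})$ into $Z(R_{d(\si)})$. For $z \in Z(R_{r(\si)})$, Proposition~\ref{composition}(b) with $\tau = \si^{-1}$ gives $\gamma_\si(\gamma_{\si^{-1}}(z)) = \gamma_{r(\si)}(z_{r(\si)})\,1_{R_{r(\si)}} = z\,1_{R_{r(\si)}} = z$ (using $z_{r(\si)} = z$ and $\gamma_{r(\si)} = \mathrm{id}$), so $\gamma_\si|_{Z(R_{d(\si)})}$ is surjective; and for $r \in Z(R_{d(\si)})$, Proposition~\ref{composition}(b) with $\tau = \si$ gives $\gamma_{\si^{-1}}(\gamma_\si(r)) = \gamma_{d(\si)}(r)\,1_{R_{d(\si)}} = r$, so it is also injective. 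Hence $\gamma_\si$ restricts to a ring isomorphism $Z(R_{d(\si)}) \to Z(R_{r(\si)})$ with inverse $\gamma_{\si^{-1}}|_{Z(R_{r(\si)})}$. Finally, for $(\si,\tau) \in \G_2$ and $r \in Z(R_{d(\tau)})$ (the domain of the isomorphism $\gamma_\tau$), Proposition~\ref{composition}(b) gives $\gamma_\si(\gamma_\tau(r)) = \gamma_{\si\tau}(r_{d(\tau)})\,1_{R_{r(\si)}} = \gamma_{\si\tau}(r)\,1_{R_{r(\si)}}$, and since $\gamma_{\si\tau}(r) \in R_{r(\si\tau)} = R_{r(\si)}$ this equals $\gamma_{\si\tau}(r)$.

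I expect the only genuinely substantive step to be the centrality of the image: it is where the strongly graded hypothesis enters (through $R_{r(\si)} = R_\si R_{\si^{-1}}$), and it hinges on the auxiliary identities $\gamma_\si(r)a = ar$ and $b\gamma_\si(r) = rb$, whose verification demands care with composability of degrees and with object unitality. Everything else reduces to bookkeeping on top of Proposition~\ref{composition}.
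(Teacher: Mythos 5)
Your proof is correct and follows essentially the same route as the paper's: both hinge on the resolution $1_{R_{r(\si)}}=\sum_i u_\si^{(i)}v_{\si^{-1}}^{(i)}$ to slide central elements past products lying in $R_{\si^{-1}}R_\si\subseteq R_{d(\si)}$, and both get surjectivity, injectivity and the composition formula from Proposition \ref{composition}. The only cosmetic difference is that you establish centrality of the image via the auxiliary identities $\gamma_\si(r)a=ar$ and $b\gamma_\si(r)=rb$ together with $R_{r(\si)}=R_\si R_{\si^{-1}}$ (the first identity is itself a proposition proved just after this one in the paper), whereas the paper verifies $\gamma_\si(r)r'=r'\gamma_\si(r)$ for $r'\in R_0$ by one direct insertion of the resolution of the identity.
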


\begin{proof}
From Proposition \ref{firstrestriction} it follows that $\gamma_\si(Z(R_0)) \subseteq R_{r(\si)}$.
Using Remark \ref{remarkcenter}, we only need to show that $\gamma_\si(Z(R_0)) \subseteq Z(R_0)$.
To this end, take $r \in Z(R_0)$ and $r' \in R_0$. Then,
since $v_{\si^{-1}}^{(i)} r' \in R_{\si^{-1}}$, we get that
\[
\gamma_\si(r) r' = \sum_{i=1}^{n_\si} u_\si^{(i)} r v_{\si^{-1}}^{(i)} r' 
= \sum_{i=1}^{n_\si} u_\si^{(i)} r v_{\si^{-1}}^{(i)} r' 1_{R_{r(\si)}} 
= \sum_{i=1}^{n_\si} \sum_{j=1}^{n_\si} u_\si^{(i)} r v_{\si^{-1}}^{(i)} r' u_\si^{(j)} v_{\si^{-1}}^{(j)}.
\]
Since $r \in Z(R_0)$, $v_{\si^{-1}}^{(i)} r' u_\si^{(j)} \in R_0$
and $r' u_\si^{(j)} \in R_\si$, the last sum equals
\[
\sum_{i=1}^{n_\si} \sum_{j=1}^{n_\si} u_\si^{(i)}  v_{\si^{-1}}^{(i)} r' u_\si^{(j)} r v_{\si^{-1}}^{(j)} 
= \sum_{j=1}^{n_\si} 1_{R_{r(\si)}} r' u_\si^{(j)} r v_{\si^{-1}}^{(j)}
= \sum_{j=1}^{n_\si} r' u_\si^{(j)} r v_{\si^{-1}}^{(j)} 
= r' \gamma_\si(r).
\]
This shows that $\gamma_\si(r) \in Z(R_0)$.
Now we show that the restriction of $\gamma_\si$ to $Z(R_0)$
respects multiplication.
Take $r,r' \in Z(R_0)$. Then
\begin{eqnarray*}
\gamma_\si(r r') &=& \sum_{i=1}^{n_\si} u_\si^{(i)} r r' v_{\si^{-1}}^{(i)} 
                 = \sum_{i=1}^{n_\si} 1_{R_{r(\si)}} u_\si^{(i)} r r' v_{\si^{-1}}^{(i)} \\
                 &=& \sum_{i=1}^{n_\si} \sum_{j=1}^{n_\si} u_\si^{(j)} v_{\si^{-1}}^{(j)} u_\si^{(i)} r r' v_{\si^{-1}}^{(i)}.
\end{eqnarray*}
Since $r \in Z(R_0)$ and $v_{\si^{-1}}^{(j)} u_\si^{(i)} \in R_0$, the last sum equals
\[
\sum_{i=1}^{n_\si} \sum_{j=1}^{n_\si} u_\si^{(j)} r v_{\si^{-1}}^{(j)} u_\si^{(i)} r' v_{\si^{-1}}^{(i)} 
=\sum_{j=1}^{n_\si} u_\si^{(j)} r v_{\si^{-1}}^{(j)} \sum_{i=1}^{n_\si} u_\si^{(i)} r' v_{\si^{-1}}^{(i)} 
= \gamma_\si(r) \gamma_\si(r').
\]
Next, we show that the restriction $Z(R_0) \rightarrow Z(R_{r(\si)})$ is surjective. Take $r \in Z(R_{r(\si)})$.
From Proposition \ref{composition}, we get that
\[
\gamma_\si ( \gamma_{\si^{-1}} ( r_{r(\si)} ) ) = \gamma_{r(\si)}(r_{r(\si)}) 1_{R_{r(\si)}} 
= r_{r(\si)} 1_{R_{r(\si)}} = r_{r(\si)}.
\]
We also need show that the restriction $Z(R_{d(\si)}) \rightarrow Z(R_{r(\si)})$
is injective. Suppose that $r \in Z(R_{d(\si)})$ is chosen so that 
$\gamma_\si( r ) = 0$.
From Proposition \ref{composition}, we get that
$r = \gamma_{d(\si)}( r ) = \gamma_{\si^{-1}} ( \gamma_\si ( r ) ) = 0$.
Finally, we show that each $\gamma_\si$ respects multiplicative identities
\[
\gamma_\si( 1_{R_{d(\si)}} ) = 
\sum_{i=1}^{n_\si} u_\si^{(i)} 1_{R_{d(\si)}} v_{\si^{-1}}^{(i)} = 
\sum_{i=1}^{n_\si} u_\si^{(i)} v_{\si^{-1}}^{(i)} = 1_{R_{c(\si)}}.
\]
\end{proof}

\begin{rem} Let $D_\sigma=Z(R_{r(\sigma)}).$ Then it follows from 
Proposition \ref{restriction} that the family $\{\gamma_\sigma: D_{\sigma\m}\to  D_\sigma \}_{\sigma\in G}$ 
is a global groupoid action of $\G$ on $Z(R_0),$ (see \cite[p. 3660]{BP}).
\end{rem}

\begin{prop}
If $\si \in \G$, $r \in R_\si$ and $x \in Z(R_{d(\si)})$, then $r x = \gamma_\si(x) r$.
\end{prop}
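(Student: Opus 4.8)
The plan is to unwind $\gamma_\si(x) r$ directly from the definition of $\gamma_\si$ and then push $x$ through the expression using that it is central in $R_{d(\si)}$. Concretely, I would start from
\[
\gamma_\si(x) r = \sum_{i=1}^{n_\si} u_\si^{(i)} x v_{\si^{-1}}^{(i)} r .
\]
The crucial point is the degree bookkeeping: for each $i$ we have $v_{\si^{-1}}^{(i)} r \in R_{\si^{-1}} R_\si$, and since $d(\si^{-1}) = \si\si^{-1} = r(\si)$ the pair $(\si^{-1},\si)$ lies in $\G_2$ with $\si^{-1}\si = d(\si)$, so $v_{\si^{-1}}^{(i)} r \in R_{d(\si)}$. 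Hence $x \in Z(R_{d(\si)})$ commutes with each $v_{\si^{-1}}^{(i)} r$, giving
\[
\gamma_\si(x) r = \sum_{i=1}^{n_\si} u_\si^{(i)} v_{\si^{-1}}^{(i)} r x .
\]

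To finish I would invoke the defining relation $\sum_{i=1}^{n_\si} u_\si^{(i)} v_{\si^{-1}}^{(i)} = 1_{R_{r(\si)}}$ together with object unitality of $R$: since $r \in R_\si$, we have $1_{R_{r(\si)}} r = r$, so the last sum equals $1_{R_{r(\si)}} r x = r x$, which is the asserted identity.

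I do not expect any genuine obstacle here; the only things requiring a moment of care are the two routine verifications just mentioned, namely that $v_{\si^{-1}}^{(i)} r$ really lands in $R_{d(\si)}$ (so that centrality of $x$ may be invoked) and the final cancellation of the idempotent $1_{R_{r(\si)}}$ via object unitality. The statement is a short lemma whose role is evidently to feed into the subsequent separability analysis, where the maps $\gamma_\si$ act as the local trace components.
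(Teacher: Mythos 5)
Your argument is correct and is essentially the paper's proof read in the opposite direction: the same degree computation $v_{\si^{-1}}^{(i)} r \in R_{\si^{-1}} R_\si \subseteq R_{d(\si)}$, the same use of centrality of $x$, and the same final appeal to $\sum_{i=1}^{n_\si} u_\si^{(i)} v_{\si^{-1}}^{(i)} = 1_{R_{r(\si)}}$ with object unitality. No gaps.
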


\begin{proof}
Since $x \in Z(R_{d(\si)})$ and $v_{\si^{-1}}^{(i)} r \in R_{d(\si)}$, we get that
\[
r x = 1_{R_{r(\si)}} r x 
    = \sum_{i=1}^{n_\si} u_\si^{(i)} v_{\si^{-1}}^{(i)} r x 
    = \sum_{i=1}^{n_\si} u_\si^{(i)} x v_{\si^{-1}}^{(i)} r  
    = \gamma_\si(x) r.
\]
\end{proof}

\begin{prop}\label{uniqueness}
Take $\si \in \G$, $s \in Z( R_{d(\si)} )$ and $t \in Z( R_{c(\si)} )$.
If for all $r \in R_{\si}$, the equality $t r = r s$ holds, then $t = \gamma_\si(s)$.
\end{prop}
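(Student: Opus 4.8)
The plan is to combine the preceding proposition with the fixed idempotent decomposition $1_{R_{r(\si)}} = \sum_{i=1}^{n_\si} u_\si^{(i)} v_{\si\m}^{(i)}$. First I would note that, since $s \in Z(R_{d(\si)})$, the preceding proposition gives $r s = \gamma_\si(s) r$ for every $r \in R_\si$. Together with the hypothesis $t r = r s$ for all $r \in R_\si$, this yields $(t - \gamma_\si(s)) r = 0$ for all $r \in R_\si$; in particular, taking $r = u_\si^{(i)}$ for $i = 1, \dots, n_\si$, we obtain $t u_\si^{(i)} = \gamma_\si(s) u_\si^{(i)}$.

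The remaining step is a short computation using object unitality. Since $t \in Z(R_{r(\si)}) \subseteq R_{r(\si)}$ we have $t = t\, 1_{R_{r(\si)}}$, and by Proposition \ref{restriction} the element $\gamma_\si(s)$ lies in $Z(R_{r(\si)})$, so $\gamma_\si(s) = \gamma_\si(s)\, 1_{R_{r(\si)}}$. Expanding $1_{R_{r(\si)}}$ and substituting $t u_\si^{(i)} = \gamma_\si(s) u_\si^{(i)}$,
\[
t \;=\; t\, 1_{R_{r(\si)}} \;=\; \sum_{i=1}^{n_\si} t u_\si^{(i)} v_{\si\m}^{(i)} \;=\; \sum_{i=1}^{n_\si} \gamma_\si(s) u_\si^{(i)} v_{\si\m}^{(i)} \;=\; \gamma_\si(s)\, 1_{R_{r(\si)}} \;=\; \gamma_\si(s),
\]
which is exactly the asserted equality. (I read the $c(\si)$ in the statement as $r(\si)$.)

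I do not anticipate any genuine obstacle: once the preceding proposition is invoked the argument is essentially a two-line verification. The only points that need a moment's care are that $\gamma_\si(s) \in Z(R_{r(\si)})$—needed so that right multiplication by $1_{R_{r(\si)}}$ fixes it—and that $s \in Z(R_{d(\si)}) \subseteq Z(R_0)$, so that $\gamma_\si(s)$ is unambiguously defined; both are supplied by Proposition \ref{restriction} and Remark \ref{remarkcenter}. It is worth remarking that this proposition furnishes an intrinsic characterization of $\gamma_\si$ on centers, complementing the earlier statement that $\gamma_\si$ restricted to $Z(R_0)$ is independent of the chosen elements $u_\si^{(i)}$, $v_{\si\m}^{(i)}$.
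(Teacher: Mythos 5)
Your argument is correct and is essentially the paper's own proof: both expand $t = t\,1_{R_{r(\si)}} = \sum_{i=1}^{n_\si} t u_\si^{(i)} v_{\si^{-1}}^{(i)}$ and apply the hypothesis at $r = u_\si^{(i)}$. The only (harmless) difference is that the paper substitutes $t u_\si^{(i)} = u_\si^{(i)} s$ directly and recognizes the resulting sum as $\gamma_\si(s)$ by definition, whereas you take a small detour through the preceding proposition to rewrite $t u_\si^{(i)}$ as $\gamma_\si(s) u_\si^{(i)}$ before factoring out $\gamma_\si(s)$.
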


\begin{proof}
This follows from
\[
t = t 1_{R_{r(\si)}} 
  = \sum_{i=1}^{n_\si} t u_\si^{(i)} v_{\si^{-1}}^{(i)} 
  = \sum_{i=1}^{n_\si} u_\si^{(i)} s v_{\si^{-1}}^{(i)} 
  = \gamma_\si(s).
\]
\end{proof}

\begin{defi}\label{definition:trace}
If $e \in \G_0$ and $\G(e)$ is finite, 
then we define the {\it trace function at $e$},
$
{\rm tr}_{e} : Z(R_0) \rightarrow Z(R_0) 
$,
by 
$
{\rm tr}_{e}(r) = \sum_{\si \in \G(e)} \gamma_\si(r)
$
for $r \in Z(R_0)$. 
Note that if $\G$ is a finite group with identity element $e$, then ${\rm tr_e}$ coincides with
the usual trace $Z(R_e) \to Z(R_e)$ from the group graded case (see \cite{nastasescu1989}).
\end{defi}

\begin{defi}
For all $\si \in \G$ we put $w_\si = \sum_{i=1}^{n_\si} u_\si^{(i)} \otimes v_{\si^{-1}}^{(i)} \in R_\si \otimes_{R_0} R_{\si^{-1}}$.
\end{defi}

\begin{prop}\label{niceequality}
If $(\si,\ta) \in \G_2$ and $r \in R_\si$, then $r w_\ta = w_{\si\ta} r$.
\end{prop}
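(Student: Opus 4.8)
The plan is to verify the identity $r w_\ta = w_{\si\ta} r$ directly in the tensor product $R_\si \otimes_{R_0} R_{\si\ta} \otimes_{R_0}$ -- wait, let me reconsider the types. For $r \in R_\si$ we have $r w_\ta = \sum_{i=1}^{n_\ta} r u_\ta^{(i)} \otimes v_{\ta^{-1}}^{(i)} \in R_{\si\ta} \otimes_{R_0} R_{\ta^{-1}}$, while $w_{\si\ta} r = \sum_{k=1}^{n_{\si\ta}} u_{\si\ta}^{(k)} \otimes v_{(\si\ta)^{-1}}^{(k)} r$; since $v_{(\si\ta)^{-1}}^{(k)} \in R_{\ta^{-1}\si^{-1}}$ and $r \in R_\si$, the second tensor factor $v_{(\si\ta)^{-1}}^{(k)} r$ lies in $R_{\ta^{-1}\si^{-1}\si} = R_{\ta^{-1}}$, so both sides live in $R_{\si\ta} \otimes_{R_0} R_{\ta^{-1}}$, confirming the statement makes sense.

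First I would insert a copy of $1_{R_{r(\si)}} = 1_{R_{r(\si\ta)}} = \sum_{k=1}^{n_{\si\ta}} u_{\si\ta}^{(k)} v_{(\si\ta)^{-1}}^{(k)}$ on the left of $r u_\ta^{(i)}$ in the expression for $r w_\ta$, using object unitality (note $r u_\ta^{(i)} \in R_{\si\ta}$, so multiplying by $1_{R_{r(\si\ta)}}$ on the left fixes it). This turns $r w_\ta$ into
\[
\sum_{i=1}^{n_\ta} \sum_{k=1}^{n_{\si\ta}} u_{\si\ta}^{(k)} v_{(\si\ta)^{-1}}^{(k)} r u_\ta^{(i)} \otimes v_{\ta^{-1}}^{(i)}.
\]
The element $v_{(\si\ta)^{-1}}^{(k)} r u_\ta^{(i)}$ lies in $R_{\ta^{-1}\si^{-1}} R_\si R_\ta \subseteq R_{\ta^{-1}\si^{-1}\si\ta} = R_{d(\ta)} \subseteq R_0$, so I can slide it across the tensor symbol (the tensor is over $R_0$), obtaining $\sum_{i,k} u_{\si\ta}^{(k)} \otimes v_{(\si\ta)^{-1}}^{(k)} r u_\ta^{(i)} v_{\ta^{-1}}^{(i)}$. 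Then $\sum_{i=1}^{n_\ta} u_\ta^{(i)} v_{\ta^{-1}}^{(i)} = 1_{R_{r(\ta)}} = 1_{R_{d(\si)}}$, and since $v_{(\si\ta)^{-1}}^{(k)} r \in R_{\ta^{-1}\si^{-1}} R_\si \subseteq R_{\ta^{-1}} = R_{\ta^{-1} d(\si)}$, absorbing $1_{R_{d(\si)}}$ on the right does nothing; this collapses the sum over $i$ and leaves exactly $\sum_{k=1}^{n_{\si\ta}} u_{\si\ta}^{(k)} \otimes v_{(\si\ta)^{-1}}^{(k)} r = w_{\si\ta} r$.

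The only subtlety -- and the step I would be most careful about -- is bookkeeping the domains and ranges so that each insertion of an identity element $1_{R_e}$ is legitimate (i.e. that $e$ really is the range or domain of the homogeneous component being multiplied) and that each factor I claim lands in $R_0$ genuinely does, so the move across $\otimes_{R_0}$ is valid. All of these are immediate consequences of the groupoid identities $r(\si\ta) = r(\si)$, $d(\si\ta) = d(\ta)$, $d(\si) = r(\ta)$ together with object unitality and the strong grading relations $R_\si R_{\si^{-1}} = R_{r(\si)}$; there is no real obstacle beyond writing the chain of equalities carefully. I would present it as a single displayed computation in the style of the preceding proofs in the paper.
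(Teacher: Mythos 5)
Your proposal is correct and is essentially identical to the paper's proof: insert $1_{R_{r(\si\ta)}}=\sum_k u_{\si\ta}^{(k)}v_{(\si\ta)^{-1}}^{(k)}$ in front of $ru_\ta^{(i)}\in R_{\si\ta}$, slide the $R_0$-element $v_{(\si\ta)^{-1}}^{(k)}ru_\ta^{(i)}$ across the tensor, and collapse the sum over $i$ via $\sum_i u_\ta^{(i)}v_{\ta^{-1}}^{(i)}=1_{R_{r(\ta)}}$. The domain/range bookkeeping you flag is exactly what the paper relies on and is handled correctly.
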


\begin{proof}
Since $r u_\ta^{(i)} \in R_{\si\ta}$, we get that
\begin{eqnarray*}
r w_\ta &=& \sum_{i=1}^{n_\ta} r u_\ta^{(i)} \otimes v_{\ta^{-1}}^{(i)} 
        = \sum_{i=1}^{n_\ta} 1_{R_{r(\si\ta)}} r u_\ta^{(i)} \otimes v_{\ta^{-1}}^{(i)} \\
        &=& \sum_{i=1}^{n_\ta} \sum_{j=1}^{n_{\si\ta}} u_{\si\ta}^{(j)} v_{(\si\ta)^{-1}}^{(j)} r u_\ta^{(i)} \otimes v_{\ta^{-1}}^{(i)} 
        = \sum_{i=1}^{n_\ta} \sum_{j=1}^{n_{\si\ta}} u_{\si\ta}^{(j)} \otimes v_{(\si\ta)^{-1}}^{(j)} r u_\ta^{(i)} v_{\ta^{-1}}^{(i)} \\
        &=& \sum_{j=1}^{n_{\si\ta}} u_{\si\ta}^{(j)} \otimes v_{(\si\ta)^{-1}}^{(j)} r 1_{R_{r(\ta)}} 
        = w_{\si\ta} r.
\end{eqnarray*}
\end{proof}

\begin{defi}
Define an equivalence relation $\sim$ on $\G_0$ by saying that $e \sim f$, for $e,f \in \G_0$,
if there exists $\sigma \in \G$ with $d(\si) = e$ and $r(\si) = f$.
For all $e \in \G_0$, let $[e] = \{ f \in \G_0 \mid f \sim e \}$ denote the equivalence class to which $e$ belongs.
Let $\Omega$ denote a set of representatives for the different equivalence classes in $\G_0$ defined by $\sim$. 
If $e,f \in \G_0$, then we put $\G(e,f) = \{ \si \in \G \mid d(\si)=e \ {\rm and} \ r(\si)=f \}$.
Note that $\G(e,e)$ coincides with $\G(e)$ defined earlier.
\end{defi}

Now we prove a result the conclusions of which implies Theorem \ref{maintheorem1}.

\begin{prop}\label{separableprop}
The following are equivalent:
\begin{itemize}

\item[(i)] the restriction functor 
$\varphi_* : R\mbox{-Mod} \to R_0\mbox{-Mod}$
associated to the inclusion map $\varphi : R_0 \to R$, is separable;

\item[(ii)] the ring extension $R/R_0$ is separable;

\item[(iii)] for all finite subsets $E$ of $\G_0$, the extension $R_{\G(E)} / R_E$ is separable;

\item[(iv)] for all $e \in \G_0$, the ring extension $R_{\G(e)} / R_e$ is separable;

\item[(v)] for all $e \in \G_0$, the group $\G(e)$ is finite and $1_{R_e} \in {\rm tr}_e(Z(R_e))$.

\end{itemize}
\end{prop}

\begin{proof}
(i)$\Leftrightarrow$(ii): This follows from Proposition \ref{propsepidempotent}.

(ii)$\Rightarrow$(v): Take $e \in \G_0$. 
From Proposition \ref{propsepidempotent}, it follows that there exists
$y_e \in R \otimes_{R_0} R$ such that $\mu(y_e) = 1_{R_e}$ and 
for all $f \in \G_0$ and all $a \in 1_{R_e} R 1_{R_f}$, the equality $y_e a = a y_f$ holds. 
Then there exists $m_e,k_\si \in \mathbb{N}$ such that 
\[
y_e = \sum_{i=1}^{m_e} \sum_{\si \in \G(f_i,e)} z_\si^{(i)}
\]
where
\[
z_\si^{(i)} := \sum_{j=1}^{k_\si} a_\si^{(j)} \otimes b_{\si^{-1}}^{(j)} = 0
\]
for all but finitely many $\si \in \G(f_i,e)$.
For all $i \in \{ 1,\ldots,m_e \}$ and all $\si \in \G(f_i,e)$ put
$c_\si^{(i)} = \mu( z_\si^{(i)} ).$
Take $r \in R_e$. Then, from the equality $y_e r = r y_e$ it follows that
$z_\si^{(i)} r = r z_\si^{(i)}$, for all $i$ and all $\si$. Thus
\[
c_\si^{(i)} r = \mu( z_\si^{(i)} ) r = \mu( z_\si^{(i)} r ) =
 \mu( r z_\si^{(i)} ) = r \mu( z_\si^{(i)} ) = r c_\si^{(i)}.
\]
Hence $c_\si^{(i)} \in Z(R_e)$.
Take $\tau \in \G(e)$ and $r_\tau \in R_{\tau}$. The equality
$y_e r_\tau = r_\tau y_e$ implies that $z_{\tau \si}^{(i)} r_\tau = r_\tau z_\si^{(i)}$
from which it follows that $c_{\tau \si}^{(i)} r_\tau = r_\tau c_\si^{(i)}$.
From Proposition \ref{uniqueness} we get that $\gamma_\tau(c_\si^{(i)}) = c_{\tau \si}^{(i)}$.
Since $1_{R_e} \neq 0$, at least one the $c_\si^{(i)}$ must be non-zero.
Thus $\G(e)$ is finite. Now, for all $i \in \{ 1,\ldots,m_e \}$ take $\si_i \in \G(f_i,e)$. 
Put $d_e = \sum_{i=1}^{m_e} c_{\si_i}^{(i)}$. Then, $d_e \in Z(R_0)$ and
\begin{eqnarray*}
{\rm tr}_e \left( d_e \right) &=& \sum_{i=1}^{m_e} {\rm tr}( c_{\si_i}^{(i)} ) 
 = \sum_{i=1}^{m_e} \sum_{\tau \in \G(e,e)} \gamma_\tau( c_{\si_i}^{(i)} ) 
 = \sum_{i=1}^{m_e} \sum_{\tau \in \G(e,e)} c_{\tau \si_i}^{(i)} \\
 &=& \sum_{i=1}^{m_e} \sum_{\si \in \G(f_i,e)} c_\si^{(i)} 
 = \sum_{i=1}^{m_e} \sum_{\si \in \G(f_i,e)} \mu( z_\si^{(i)} ) 
 = \mu( y_e ) = 1_{R_e}.
\end{eqnarray*}

(v)$\Rightarrow$(ii): For all $\omega \in \Omega$ take $r_\omega \in R_\omega$ such that ${\rm tr}_\omega(r_\omega) = 1_{R_\omega}$.
For all $e \in [\omega]$ put $x_e = \sum_{\si \in \G(\omega,e)} \gamma_\si (r_\omega) w_\si$.
First we show condition (i) from Proposition \ref{propsepidempotent}.
Fix $\tau \in \G(\omega,e)$. Then
\begin{eqnarray*}
\mu(x_e) &=& \sum_{\si \in \G(\omega,e)} \gamma_\si (r_\omega) \mu( w_\si ) 
         = \sum_{\si \in \G(\omega,e)} \gamma_\si (r_\omega) 1_{R_e} \\
         &=& \sum_{\si \in \G(\omega,e)} \gamma_\si (r_\omega) 
         = \sum_{\si \in \G(\omega,e)} \gamma_{\tau} ( \gamma_{\tau^{-1} \si} (r_\omega) ) \\
         &=& \gamma_{\tau} \left(   \sum_{\si \in \G(\omega,e)} \gamma_{\tau^{-1} \si} (r_\omega)  \right) 
         = \gamma_{\tau} \left(   \sum_{\rho \in \G(\omega,\omega)} \gamma_{\rho} (r_\omega)  \right) \\
         &=& \gamma_{\tau} \left( {\rm tr}_\omega (r_\omega)  \right) 
         = \gamma_{\tau} ( 1_{R_{\omega}} ) 
         = 1_{R_e}.
\end{eqnarray*}
Next we show condition (ii) from Proposition \ref{propsepidempotent}. To this end, take $e,f \in \G_0$
and $s \in 1_{R_e} R 1_{R_f}$. We wish to show the equality $x_e s = s x_f$.
Case 1: $e \nsim f$. Then $1_{R_e} R 1_{R_f} = \{ 0 \}$ so the equality trivially holds.
Case 2: $e \sim f$. Take $\omega \in \Omega$ with $e,f \in [\omega]$.
It is enough to show the equality $x_e s = s x_f$ for $s \in R_\rho$ for all $\rho \in G(e,f)$.
For such an element $s$ we get, from Proposition \ref{niceequality}, that
\begin{eqnarray*}
s x_e &=& \sum_{\si \in \G(\omega,e)} s \gamma_\si( r_\omega ) w_\si 
       = \sum_{\si \in \G(\omega,e)} \gamma_{\si\rho}( r_\omega ) s w_\si \\
      &=& \sum_{\si \in \G(\omega,e)} \gamma_{\si\rho}( r_\omega ) w_{\rho\si} s 
       = \sum_{\tau \in \G(\omega,f)} \gamma_\tau( r_\omega ) w_{\tau} s 
       = x_f s.
\end{eqnarray*}  
From Proposition \ref{propsepidempotent} it follows that $R/R_0$ is separable. 

(i)$\Leftrightarrow$(iii) and (i)$\Leftrightarrow$(iv): 
Since $\G(E)$ is a groupoid of and $\G(e)$ is a group both of these equivalences follow from the equivalences 
(i)$\Leftrightarrow$(ii)$\Leftrightarrow$(v).
\end{proof}

\begin{rem}
Proposition \ref{separableprop} generalizes \cite[Proposition 4]{lundstrom2005} to the 
object unital graded situation. Moreover, our result greatly simplifies the condition of a ``global''
trace (see \cite[Definition 5]{lundstrom2005}) to a collection of ``local'' traces.
\end{rem}

\subsection*{Crossed products}

Suppose from now on that $R = A \rtimes^{\alpha}_{\beta} \G$ is an object crossed product
defined by a crossed system $( A , \G , \alpha , \beta )$. 
From Definition \ref{definition:trace}, it follows that if $e \in \G_0$ and $\G(e)$ is finite, 
then the trace function at $e$ is given by ${\rm tr}_{e}(a) = \sum_{\si \in \G(e)} \alpha_\si(a)$ for $a \in Z(A_e)$. 

\begin{prop}[Theorem \ref{maintheorem2}]\label{separablecrossed}
The ring extension $R/R_0$ is separable, if and only if, for all $e \in \G_0$, the group $\G(e)$ is finite and 
there exists $a \in A_e$ such that $\sum_{\si \in \G(e)} \alpha_\si(a) = 1_{A_e}$.
\end{prop}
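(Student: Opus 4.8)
The plan is to obtain the result as a direct corollary of Proposition \ref{separableprop}, applied to the ring $R=A\rtimes^{\alpha}_{\beta}\G$ (object unital and strongly graded, by Proposition \ref{propringtocrossed}). By the equivalence (ii)$\Leftrightarrow$(v) there, $R/R_0$ is separable if and only if, for every $e\in\G_0$, the group $\G(e)$ is finite and $1_{R_e}\in{\rm tr}_e(Z(R_e))$; thus the whole task is to rewrite the condition $1_{R_e}\in{\rm tr}_e(Z(R_e))$ in terms of the crossed system $(A,\G,\alpha,\beta)$.

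First I would record that $R_e=A_eu_e$, so $a\mapsto au_e$ is a ring isomorphism $A_e\to R_e$ carrying $1_{A_e}$ to $1_{R_e}$ and $Z(A_e)$ onto $Z(R_e)$. Next, for $e$ with $\G(e)$ finite and $\si\in\G(e)$, the distinguished factorization of $1_{R_e}$ may be taken to come from the object-invertible element $\beta_{\si,\si^{-1}}^{-1}u_\si\in R_\si$ and its object inverse $u_{\si^{-1}}\in R_{\si^{-1}}$, as in the proof of Proposition \ref{propringtocrossed}; with this choice a short computation gives $\gamma_\si(au_e)=\alpha_\si(a)u_e$ for $a\in Z(A_e)$, which is precisely the trace formula recorded just before the statement, namely ${\rm tr}_e(a)=\sum_{\si\in\G(e)}\alpha_\si(a)$ on $Z(A_e)$. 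Combining these, $1_{R_e}\in{\rm tr}_e(Z(R_e))$ is equivalent to the existence of $a\in Z(A_e)$ with $\sum_{\si\in\G(e)}\alpha_\si(a)=1_{A_e}$; since $Z(A_e)\subseteq A_e$, the ``only if'' half of the proposition is then immediate.

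For the ``if'' half one is given some $a$ with $\sum_{\si\in\G(e)}\alpha_\si(a)=1_{A_e}$ and must exhibit a separability datum for $R/R_0$. Following the proof of Proposition \ref{separableprop}(v)$\Rightarrow$(ii), the candidate is the $R_0$-bimodule map $R\to R\otimes_{R_0}R$ sending $1_{R_e}$ to $x_e=\sum_{\si\in\G(e)}\gamma_\si(a)\,w_\si$, where $w_\si\in R_\si\otimes_{R_0}R_{\si^{-1}}$ is as in that proof; that $\mu(x_e)=1_{R_e}$ falls out of $\sum_{\si\in\G(e)}\alpha_\si(a)=1_{A_e}$ together with Definition \ref{defcrossedsystem}(iv). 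The step I expect to be the genuine obstacle is checking that this assignment is $R_0$-bilinear, i.e.\ that $x_e$ commutes with every homogeneous $r\in R_\rho$ for $\rho\in\G(e)$; pushing this through the identities of Section \ref{section:separability} (of the type $r\,w_\ta=w_{\si\ta}\,r$ and $r\,x=\gamma_\si(x)\,r$) is exactly the point where one needs $a$ to be \emph{central} in $A_e$ — which is what Proposition \ref{separableprop} actually supplies. When the $A_e$ are commutative — in particular for the object crossed products $(L/K,\beta)$ of Example \ref{crossedproductseparable} — this last subtlety is vacuous, so the statement as phrased is then exactly what is needed.
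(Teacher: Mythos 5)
Your route is the paper's route: the published proof of Proposition \ref{separablecrossed} is the single line ``This follows from Proposition \ref{separableprop}'', resting on the remark immediately preceding it that ${\rm tr}_e(a)=\sum_{\si\in\G(e)}\alpha_\si(a)$ for $a\in Z(A_e)$. Your computation of $\gamma_\si$ through the object-invertible elements $\beta_{\si,\si^{-1}}^{-1}u_\si$ is exactly the verification of that remark (one gets $\gamma_\si(au_e)=\beta_{\si,\si^{-1}}^{-1}\alpha_\si(a)\beta_{\si,\si^{-1}}u_e$, and the conjugation disappears precisely because $\alpha_\si(a)$ is central whenever $a$ is), so the two arguments coincide.

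The caveat you raise at the end is not a weakness of your write-up; it is a genuine defect of the statement as printed, and you are right to insist on it. Proposition \ref{separableprop}(v) is the condition $1_{R_e}\in{\rm tr}_e(Z(R_e))$, i.e.\ the element must be taken in $Z(A_e)$, whereas the proposition (and Theorem \ref{maintheorem2}) asks only for $a\in A_e$. For noncommutative $A_e$ the two conditions are inequivalent and the ``if'' direction fails as stated: let $\G=\G(e)=\mathbb{Z}/2=\{e,g\}$ act on $A_e=M_2(\mathbb{F}_2)$ with $\alpha_g$ equal to conjugation by $s=\left(\begin{smallmatrix}0&1\\1&0\end{smallmatrix}\right)$ and $\beta$ trivial. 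The matrix unit $a=E_{11}$ satisfies $a+sas^{-1}=1_{A_e}$, but $Z(A_e)=\mathbb{F}_2\cdot 1$ and ${\rm tr}_e(\lambda\cdot 1)=2\lambda\cdot 1=0$, so no central element has trace $1$; and indeed the corresponding object skew groupoid ring is isomorphic to $M_2(\mathbb{F}_2[\mathbb{Z}/2])$, which is not separable over $M_2(\mathbb{F}_2)$. (A direct check shows that any separability element $\sum_{\si,\ta}c_{\si,\ta}u_\si\otimes u_\ta$ forces $c_{e,e}\in Z(A_e)$ with ${\rm tr}_e(c_{e,e})=1_{A_e}$, so centrality cannot be argued away.) The hypothesis should therefore read $a\in Z(A_e)$; as you observe, the distinction evaporates when the $A_e$ are commutative, which covers Example \ref{crossedproductseparable} and Theorem \ref{maintheorem3}, so the paper's applications are unaffected. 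With $Z(A_e)$ in place of $A_e$ your proof is complete and is exactly the intended one.
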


\begin{proof}
This follows from Proposition \ref{separableprop}.
\end{proof}

For a finite set $X$, we let $|X|$ denote the cardinality of $X$.

\begin{prop}\label{separabletwisted}
If $R = B \rtimes_{\beta} \G$ is an object twisted groupoid ring, then $R/R_0$ is separable,
if and only if, for all $e \in \G_0$, the group $\G(e)$ is finite and $|\G(e)| \in U(B)$.
\end{prop}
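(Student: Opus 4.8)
**

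The plan is to derive this as a direct specialization of Proposition~\ref{separablecrossed} (equivalently, Theorem~\ref{maintheorem2}) to the case of an object twisted groupoid ring $R = B \rtimes_{\beta} \G$, where all the rings $R_e$ are copies of a fixed ring $B$ and each $\alpha_\si$ is the identity map on $B$. By Proposition~\ref{separablecrossed}, $R/R_0$ is separable if and only if for every $e \in \G_0$ the group $\G(e)$ is finite and there exists $a \in A_e = B$ with $\sum_{\si \in \G(e)} \alpha_\si(a) = 1_{A_e} = 1_B$. So the whole proof amounts to showing that, under the triviality of $\alpha$, the condition ``$\exists a \in B$ with $\sum_{\si \in \G(e)} \alpha_\si(a) = 1_B$'' is equivalent to ``$|\G(e)| \in U(B)$.''

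First I would unwind the sum. Since $\alpha_\si = {\rm id}_B$ for every $\si \in \G$ (in particular for every $\si \in \G(e)$, which are automorphisms of $B$ fixing $1_B$), we get $\sum_{\si \in \G(e)} \alpha_\si(a) = \sum_{\si \in \G(e)} a = |\G(e)| \cdot a$, where $|\G(e)| \cdot a$ denotes the $|\G(e)|$-fold sum $a + \cdots + a$ in $B$. Recall also from the excerpt that in a twisted groupoid ring the cocycle values $\beta_{\si,\ta}$ lie in $U(Z(B))$, so $B$ is a perfectly ordinary (possibly noncommutative) unital ring and there is no subtlety about what $U(B)$ means. Thus the existence condition becomes: there exists $a \in B$ with $n a = 1_B$ where $n = |\G(e)|$, i.e. $n \cdot 1_B$ (viewing $n$ as an element of $B$ via the unital structure) has a left inverse $a$; and since $n \cdot 1_B$ is central, a left inverse is automatically a two-sided inverse, so this says precisely $n \cdot 1_B \in U(B)$, which is what is meant by $|\G(e)| = n \in U(B)$.

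The routine-but-necessary point to spell out is the two-sided-versus-one-sided inverse issue: the condition in Proposition~\ref{separablecrossed} only demands $\sum_\si \alpha_\si(a) = 1_{A_e}$, which a priori gives $a \cdot (n\cdot 1_B) = 1_B$ on one side only. I would note that $n \cdot 1_B$ lies in the center of $B$, hence if $a(n\cdot 1_B) = 1_B$ then $n \cdot 1_B$ is invertible with inverse $a$ (a central element with a one-sided inverse is a unit), and conversely if $n \cdot 1_B \in U(B)$ we may take $a = (n\cdot 1_B)^{-1}$. So the equivalence is clean. I expect the only mild obstacle to be making the notational identification ``$|\G(e)| \in U(B)$ means $|\G(e)| \cdot 1_B$ is invertible in $B$'' completely unambiguous; everything else is formal substitution into the already-established Proposition~\ref{separablecrossed}. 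I would write:

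\begin{proof}
This follows from Proposition \ref{separablecrossed}. Indeed, since $\alpha$ is trivial, for each $e \in \G_0$ and each $a \in B$ we have $\sum_{\si \in \G(e)} \alpha_\si(a) = \sum_{\si \in \G(e)} a = |\G(e)| \, a$. Hence there exists $a \in A_e = B$ with $\sum_{\si \in \G(e)} \alpha_\si(a) = 1_{A_e} = 1_B$ precisely when $|\G(e)| \, 1_B$ admits a (necessarily two-sided, being central) inverse in $B$, that is, when $|\G(e)| \in U(B)$.
\end{proof}
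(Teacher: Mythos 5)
Your proposal is correct and follows the same route as the paper, which simply cites Proposition \ref{separablecrossed}; you merely make explicit the (correct) observation that with $\alpha$ trivial the trace condition reads $|\G(e)|\,a = (|\G(e)|\cdot 1_B)\,a = 1_B$, and that centrality of $|\G(e)|\cdot 1_B$ upgrades the one-sided inverse to a unit.
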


\begin{proof}
This follows from Proposition \ref{separablecrossed}.
\end{proof}

Let $I$ denote a set. Define a groupoid structure on
$\G = I \times I$ in the following way.
If $(i,j) \in \G$, then put $(i,j)^{-1} = (j,i)$.
If $(i,j),(k,l) \in \G$, then $(i,j)(k,l)$ is defined
precisely when $j=k$ and is in that case equal to $(i,l)$. 
Let $B$ be a unital ring. The corresponding groupoid ring $B[\G]$
of $\G$ over $B$ is the ring of finite matrices over the index set $I$.
Note that $B[\G]_0 = B \G_0$

\begin{prop}\label{separablematrices}
If $\G = I \times I$, then $B[\G] / B \G_0$ is separable.
\end{prop}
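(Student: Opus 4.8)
The plan is to obtain this as an immediate consequence of Proposition \ref{separabletwisted} (equivalently, of Proposition \ref{separablecrossed}). First I would record that, by definition, $B[\G]$ is the groupoid ring of $\G$ over $B$, i.e.\ the object crossed product $A \rtimes^{\alpha}_{\beta} \G$ associated to the crossed system $(A,\G,\alpha,\beta)$ in which $A_e = B$ for every $e \in \G_0$ and both $\alpha$ and $\beta$ are trivial; in particular it is an object twisted groupoid ring $B \rtimes_{\beta} \G$ with trivial twist, so that Proposition \ref{separabletwisted} applies and it suffices to verify its hypotheses.

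The key (and only) computation is the identification of the principal groups $\G(e)$ for $e \in \G_0$. For $\G = I \times I$ one has $(i,j)^{-1} = (j,i)$, and $(i,j)(k,l)$ is defined exactly when $j = k$, whence $d((i,j)) = (i,j)^{-1}(i,j) = (j,j)$ and $r((i,j)) = (i,j)(i,j)^{-1} = (i,i)$; thus $\G_0 = \{ (i,i) \mid i \in I \}$. Consequently, for $e = (i,i)$, the set $\G(e) = \{ \sigma \in \G \mid d(\sigma) = r(\sigma) = e \}$ contains only the unit $(i,i)$, so $\G(e)$ is the trivial group.

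It then follows that both conditions in Proposition \ref{separabletwisted} hold for every $e \in \G_0$: the group $\G(e)$ is finite, and $|\G(e)| = 1 \in U(B)$ since $B$ is unital. Hence $B[\G]/B\G_0$ is separable. (The same conclusion drops out of Proposition \ref{separablecrossed} by taking $a = 1_B \in A_e$, for which $\sum_{\sigma \in \G(e)} \alpha_\sigma(a) = \alpha_e(1_B) = 1_B = 1_{A_e}$.) I do not anticipate a genuine obstacle here; the one point deserving attention is keeping the domain/range conventions straight so as to correctly see that $\G(e)$ is trivial, after which the statement is just an instance of the general criterion — and, since $B[\G]$ fails to be unital precisely when $I$ is infinite, it furnishes exactly the sort of concrete non-unital separable extension, in particular an infinite matrix ring, promised in the introduction.
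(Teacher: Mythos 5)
Your proposal is correct and follows the paper's own route: identify each principal group $\G((i,i))$ as the trivial group and invoke Proposition \ref{separabletwisted}, noting that $|\G((i,i))|=1$ is invertible in $B$. The verification of the domain/range conventions and the alternative deduction via Proposition \ref{separablecrossed} are accurate but add nothing beyond the paper's one-line argument.
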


\begin{proof}
This follows from Propostion \ref{separabletwisted}, since for all $i \in I$
we have that $|\G(i,i)| = 1$ which, of course, is invertible in $B$.
\end{proof}

\begin{prop}[Theorem \ref{maintheorem3}]\label{maincrossed}
Suppose that $L/K$ is a separable (not necessarily finite) field extension. Then the object crossed
product $R=(L/K,\beta)$ is separable over $R_0$, if and only if, ${\rm Aut}_K(L)$ is finite. 
In particular, if $L/K$ is Galois, then $R$ is separable over $R_0 = L$, if and only if, $L/K$ is finite.
\end{prop}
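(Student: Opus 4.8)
The plan is to deduce this from Proposition~\ref{separablecrossed} (i.e.\ Theorem~\ref{maintheorem2}) applied to the object crossed system $(\tilde{L},\G,\alpha,\beta)$ from Example~\ref{crossedproductseparable}. By that proposition, $R=(L/K,\beta)$ is separable over $R_0$ if and only if for every $e\in\G_0$ the group $\G(e)$ is finite and there exists $a\in \tilde{L}_e = L_e$ with $\sum_{\si\in\G(e)}\alpha_\si(a)=1_{L_e}$. So first I would unwind what $\G(e)$ and the $\alpha_\si$ are in this setting: indexing the conjugate fields by $i\in I$, the unit space $\G_0$ is (a copy of) $I$, and $\G(i)=G_{ii}=\{g|_{L_i}\mid g\in G,\ g(L_i)=L_i\}={\rm Aut}_K(L_i)$, with $\alpha_\si$ acting as the field automorphism $\si$ itself. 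Since all the $L_i$ are $K$-isomorphic conjugates of $L$, each group $\G(i)$ is isomorphic to ${\rm Aut}_K(L)$; in particular $\G(i)$ is finite for one $i$ iff it is finite for all $i$ iff ${\rm Aut}_K(L)$ is finite. This handles the first of the two conditions.

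Next I would address the trace condition, which reduces — via the isomorphism $L\cong L_i$ — to the following: assuming ${\rm Aut}_K(L)=:H$ is finite, is there $a\in L$ with $\sum_{\si\in H}\si(a)=1$? Equivalently, is the trace-like map $t_H:L\to L^H$, $a\mapsto\sum_{\si\in H}\si(a)$, surjective (it lands in $L^H$ and in fact takes values in $K$-... more precisely in the fixed field $L^H$)? The key point is that when $H$ is finite, $L/L^H$ is a finite Galois extension with Galois group $H$ (Artin's theorem), so it is in particular a finite separable extension, and the classical trace ${\rm Tr}_{L/L^H}=t_H$ is then surjective onto $L^H$ — indeed nonzero by separability, hence surjective as an $L^H$-linear map to the field $L^H$. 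Thus $1\in t_H(L)$, the trace condition is automatically satisfied, and the finiteness of ${\rm Aut}_K(L)$ is the only obstruction. Conversely, if ${\rm Aut}_K(L)$ is infinite then already the first condition of Proposition~\ref{separablecrossed} fails for every $e$, so $R/R_0$ is not separable. This establishes the main equivalence.

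For the final sentence, suppose $L/K$ is Galois. Then $\G$ is the group $G={\rm Gal}(L/K)={\rm Aut}_K(L)$ (there is only one conjugate field, so $I$ is a singleton and $\G_0$ is a point), hence $R_0=R_e=L$, and the criterion ``${\rm Aut}_K(L)$ finite'' becomes ``$[L:K]$ finite''. I would just cite the general statement and specialize.

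The main obstacle I anticipate is purely bookkeeping: carefully identifying, in Example~\ref{crossedproductseparable}'s notation, that $\G(i)$ really is ${\rm Aut}_K(L_i)\cong{\rm Aut}_K(L)$ and that $\alpha$ restricted to $\G(i)$ is genuine function composition of automorphisms, so that the ``local trace'' ${\rm tr}_i$ of Definition~\ref{definition:trace} coincides with the field-theoretic trace. Once that dictionary is in place, the ring-theoretic content is entirely supplied by Proposition~\ref{separablecrossed}, and the only nontrivial input is the classical fact (Artin plus nonvanishing of the separable trace) that a finite group of automorphisms of a field yields a surjective trace onto the fixed field.
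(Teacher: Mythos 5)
Your proposal is correct and follows essentially the same route as the paper: reduce to the separability criterion of Proposition~\ref{separablecrossed} (equivalently Proposition~\ref{separableprop}), identify each isotropy group $\G(e)$ with ${\rm Aut}_K(L)$, and use surjectivity of the trace of the finite separable extension $L/L^{H}$ to verify the trace condition. Your explicit appeal to Artin's theorem to identify $\sum_{\si\in H}\si(\cdot)$ with the field trace ${\rm Tr}_{L/L^H}$ is a useful clarification of a step the paper leaves implicit, but it is not a different argument.
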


\begin{proof}
The ``only if'' statement follows from Proposition \ref{separableprop}. 
Now we show the ``if'' statement.
Put $H = {\rm Aut}_K(L)$ and suppose that $H$ is finite.
Since $L/K$ is separable it follows that $L/L^H$ also is separable.
Thus the trace map $L \to L^H$ is surjective (see e.g. \cite[Theorem (4.6)]{reiner1975}) and hence,
from Proposition \ref{separableprop} again, it follows that $R/R_0$ is separable.
The ``Galois'' part of the proof is immediate.
\end{proof}

\begin{ex}
It is easy to construct non-Galois separable field extensions $L/K$ of infinite degree
such that ${\rm Aut}_K(L)$ is a finite set. 
In fact, such an extension can be chosen so that ${\rm Aut}_K(L)$ is the trivial group (and hence finite).
For instance,  put $K = \mathbb{Q}$ and $L = \mathbb{Q}( \sqrt[3]{p_1},\sqrt[3]{p_2},\sqrt[3]{p_3} , \ldots )$,
where $p_1=2, \ p_2=3, \ p_3=5, \ldots$ denote the prime numbers. 
Then $L/K$ is of infinite degree and separable (since ${\rm char}(K)=0$).
It is easy to check that ${\rm Aut}_K(L) = \{ {\rm id}_L \}$.
Since $L$ is a real field none of the polynomials $\{ X^3 - p_i \}_{i=1}^{\infty}$ split in $L$.
Thus, $L/K$ is not normal and thus non-Galois.
\end{ex}


\begin{thebibliography}{R}

\bibitem{AnM}P. N. \'Anh and L. M\'arki, \emph{Morita equivalence for rings without identity,}  Tsukuba J.
Math. 11(2) (1987), 1–16.

\bibitem{BP} D. Bagio and A. Paques, Partial Groupoid Actions: Globalization, Morita Theory, and Galois Theory, \emph{Comm. Algebra}, \textbf{40} (2012), 3658-3678.

\bibitem{B-GT} G. B\"{o}hm, J.G\'omez-Torrecillas, \emph{Firm monads and firm Frobenius algebras}, Bull. Math. Soc. Sci. Math.  Roumanie 56 (104), 281-298 (2013).

\bibitem{brzezinski2005}
T. Brzezinski, L. Kadison and R. Wisbauer,
On coseparable and biseparable corings, in
{\it Hopf Algebras in Noncommutative Geometry and Physics}, Pure and applied mathematics,
volume 239, Marcel Dekker, New York (2005)

\bibitem{ELGO} L. El Kaoutit and J.G\'omez-Torrecillas, \emph{Invertible unital bimodules over rings with local units, and
related exact sequences of groups, II} J.  Algebra
 370 (2012), Pages 266-296.
\bibitem{Fu} K. R. Fuller, \emph{On rings whose left modules are direct sums of finitely generated modules}, Proc. Amer. Math. Soc. 54 (1976), 39–44.

\bibitem{kadison1999}
L. Kadison, New Examples of Frobenius Extensions, University Lecture Series, Vol. 14, Amer. Math. Soc.,
Providence, RI (1999)



\bibitem{L} M. V. Lawson, \emph{Inverse Semigroups. The Theory of Partial Symmetries} (World Scientific Pub. Co., 1995).

\bibitem{liu2005}
G. Liu, F. Li,
On Strongly Groupoid Graded Rings and the
Corresponding Clifford Theorem,
{\it   Alg. Colloquium} \textbf{13}(2), 181--196 (2005).

\bibitem{lundstrom2004}
P. Lundstr\"{o}m,
The category of groupoid graded modules,
{\it Colloq. Math.} \textbf{100}(4), 195--211 (2004).

\bibitem{lundstrom2005}
P. Lundstr\"{o}m,
Crossed product algebras defined by separable extensions,
{\it J. Algebra} {\bf 283} (2005), 723--737.

\bibitem{lundstrom2006}
P. Lundstr\"{o}m,
Strongly groupoid graded rings and cohomology,
{\it Colloq. Math.} \textbf{106}(1), 1--13 (2006).

\bibitem{oinert2010}
P. Lundstr\"{o}m  and  J. \"{O}inert,
Commutativity and Ideals in Category Crossed Products,
{\it Proc. of the Estonian Academy of Sciences}, 
Vol. 59, No. 4, 338--346 (2010).

\bibitem{oinert2012}
 P. Lundstr\"{o}m  and  J. \"{O}inert,
Miyashita action in strongly groupoid graded rings,
{\it Internat. Electronic J. Algebra} {\bf 11}, 46--63 (2012).

\bibitem{lundstrom2012}
P. Lundstr\"{o}m and J. \"{O}inert,
Skew category algebras associated with partially defined dynamical systems,
{\it Internat. Journal of Mathematics}, 
Vol. 23, No. 4, pp. 16 (2012).

\bibitem{N82}
C. N{\v a}st{\v a}sescu and F. Van Oystaeyen, 
{\it Graded Ring Theory}, North-Holland Publishing Co., Amsterdam-New York (1982).

\bibitem{nastasescu1989} C. N{\v a}st{\v a}sescu, M. Van den Bergh and F. Van Oystaeyen, {\it
Separable Functors Applied to Graded Rings}, J. Algebra {\bf
123}, 397-413 (1989).

\bibitem{nastasescu2004}
C. Nastasescu and F. van Oystaeyen,
{\it Methods of graded rings},
Springer Lecture Notes (2004).

\bibitem{Ny2019} P. Nystedt, \emph{A survey of $s$-unital and locally unital rings.}
{\it Revista Integraci\'on.} {\bf 37} (2), 251--260 (2019).  

\bibitem{nystedt2013}
P. Nystedt and J. \"{O}inert,
Simple skew category algebras associated with minimal partially defined dynamical systems,
{\it Discrete and Continuous Dynamical Systems - Series A (DCDS-A)}, 
Vol. 33, No. 9, 4157--4171 (2013).

\bibitem{NyOP22018}  
P. Nystedt, J. \"{O}inert, H. Pinedo, 
Epsilon-strongly groupoid graded rings, the Picard inverse category and cohomology,
{\it Glasgow Math. Journal}, Vol. 62, 233--259 (2020).



\bibitem{reiner1975}
I. Reiner, {\it Maximal Orders}, Academic Press, London, 1975.

\bibitem{ren} J. Renault,
              {\it A Groupoid Approach to $C^*$-algebras},
              Lecture Notes in Mathematics {\bf 793} (1980).

\bibitem{schafer1966}
R. D. Schafer,
{\it An introduction to nonassociative algebras},
Academic Press. New York (1966).

\bibitem{taylor1982}
J. L. Taylor, 
A bigger Brauer group, 
{\it Pacific J. Math.}, Vol. 103, 163--203 (1982).

\bibitem{wisbauer2016}
R. Wisbauer,
Separability in Algebra and Category Theory, 
in {\it Proc. Intern. Conf. Aligarh Muslim University} (2016).

\end{thebibliography}
\end{document}